\newif\ifPDF
\newtheorem{thm}{Theorem}[section]
\newtheorem{cor}[thm]{Corollary}
\newtheorem{lem}[thm]{Lemma}
\newtheorem{prop}[thm]{Proposition}
\theoremstyle{definition}
\newtheorem{defn}[thm]{Definition}
\newtheorem{rem}[thm]{Remark}
\numberwithin{equation}{section}
\newtheorem{NN}[thm]{}
\newcommand{\norm}[1]{\left\Vert#1\right\Vert}
\newcommand{\abs}[1]{\left\vert#1\right\vert}
\newcommand{\Int}{\mathbb Z}
\newcommand{\Comp}{\mathbb C}
\newcommand{\Ratn}{\mathbb Q}
\newcommand{\ep}{\varepsilon}
\newcommand{\F}{\mathcal{F}}
\newcommand{\Kzero}{{K}_0}
\newcommand{\Kone}{{K}_1}
\newcommand{\aff}{\mathrm{Aff}}
\newcommand{\T}{\mathbb T}
\newcommand{\CA}{C*-algebra}
\newcommand{\dt}{\delta}
\newcommand{\beq}{\begin{eqnarray}}
\newcommand{\eneq}{\end{eqnarray}}
\newcommand{\tforal}{\,\,\,\text{for\,\,\,all}\,\,\,}
\newcommand{\andeqn}{\,\,\,{\mathrm{and}}\,\,\,}
\newcommand{\rrm}{\mathrm}
\newcommand{\p}{\mathtt{p}}
\newcommand{\Z}{\mathbb Z}
\title{Lifting KK-elements, asymptotical unitary equivalence and classification
of simple C*-algebras
}
\author{Huaxin Lin and Zhuang Niu}
\date{\today}
\begin{document}
\maketitle

\begin{abstract}
Let $A$  and $C$ be two unital  simple  \CA s with tracial rank
zero. Suppose that $C$ is amenable and satisfies the Universal
Coefficient Theorem. Denote by ${{KK}}_e(C,A)^{++}$ the set of those $\kappa$ for which
$\kappa(K_0(C)_+\setminus\{0\})\subset K_0(A)_+\setminus\{0\}$ and $\kappa([1_C])=[1_A]$.
Suppose that $\kappa\in {KK}_e(C,A)^{++}.$
 We show
that there is a unital monomorphism $\phi: C\to A$ such that
$[\phi]=\kappa.$   Suppose that $C$ is a unital AH-algebra and
$\lambda: \mathrm{T}(A)\to \mathrm{T}_{\mathtt{f}}(C)$ is a
continuous affine map for which
$\tau(\kappa([p]))=\lambda(\tau)(p)$ for all projections $p$ in
all matrix algebras of $C$ and any $\tau\in \mathrm{T}(A),$
where $\mathrm{T}(A)$ is the simplex of tracial states of $A$ and
$\mathrm{T}_{\mathtt{f}}(C)$ is the convex set of faithful tracial states
of $C.$ We prove that there is a unital monomorphism $\phi: C\to
A$ such that $\phi$ induces both $\kappa$ and $\lambda.$

Suppose that $h: C\to A$ is a unital
monomorphism and  $\gamma \in \mathrm{Hom}(\Kone(C), \aff(A)).$ We
show that there exists a unital monomorphism $\phi: C\to A$ such
that $[\phi]=[h]$ in ${KK}(C,A),$ $\tau\circ \phi=\tau\circ h$ for
all tracial states $\tau$ and the associated rotation map can be
given by $\gamma.$
Denote by $KKT(C,A)^{++}$ the set of compatible pairs $(\kappa, \lambda),$ where
$\kappa\in KL_e(C,A)^{++}$ and $\lambda$ is a continuous affine map from $\mathrm{T}(A)$ to $\mathrm{T}_{\mathtt{f}}(C).$
Together with a result of asymptotic unitary
equivalence in \cite{Lin-Asy}, this provides a bijection from
the asymptotic unitary equivalence classes of unital monomorphisms from $C$ to $A$ to
$(KKT(C,A)^{++}, \mathrm{Hom}(K_1(C), {\rrm{Aff}}(\mathrm{T}(A)))/{\cal R}_0),$ where ${\cal R}_0$ is
a subgroup related to vanishing rotation maps.

As an application, combining with a result
of W. Winter (\cite{Winter-Z}),  we show
that two unital amenable simple ${\cal Z}$-stable \CA s are
isomorphic if they have the same Elliott invariant and the tensor
products of these \CA s with any UHF-algebras have tracial rank
zero. In particular, if $A$ and $B$ are two unital separable
simple ${\cal Z}$-stable \CA s which are inductive limits of \CA s of type
I with unique tracial states, then they are isomorphic if
and only if they have isomorphic Elliott invariant.

\end{abstract}

\section{Introduction}

Let $A$ and $B$ be two unital separable amenable simple \CA s
satisfying the Universal Coefficient Theorem.  It has been shown
(see \cite{Lnduke} and \cite{LinTAI}, also \cite{EG-RR0AH} and
\cite{EGL-AH}) that, if in addition, $A$ and $B$ have tracial rank
one or zero, then $A$ and $B$ are isomorphic if their Elliott
invariant are isomorphic. There are interesting simple amenable
\CA s  with stable rank one which do not have tracial rank zero or
one and some classification theorem have been established too (see
for example \cite{JS-Z}, \cite{JSsplit}, \cite{Niu-thesis} and \cite{EN-Tapprox}). One of the interesting classes of simple
amenable \CA s which satisfy the UCT are those simple ASH-algebras (approximate sub-homogeneous \CA s), or even more
general, those simple \CA s which are inductive limits of type I
\CA s. In the case of unital ${\cal Z}$-stable simple
ASH-algebras, if in addition, their projections separate
traces, a classification theorem can be  given (see
\cite{Winter-Z} and \cite{Lin-App}). More precisely, let $A$ and
$B$ be two unital simple ${\cal Z}$-stable ASH-algebras whose
projections separate the traces. Then $A\cong B$  if and only if
$({K}_0(A), {K}_0(A)_+, [1_A], {K}_1(A))$ is isomorphic to $({K}_0(B), {K}_0(B)_+, [1_B], {K}_1(B))$
provided that ${K}_i(A)$ are
finitely generated, or ${K}_i(A)$ contains its torsion part as a direct
summand. It should be noted that there are known examples that
ASH-algebras whose projections separate the traces but have real
rank other than zero. In particular, $A$ and $B$ may not have any
non-trivial projections as long as {each one has a unique tracial
state} (for instance, the Jiang-Su algebra $\mathcal Z$). It is clearly important to remove the above mentioned
restriction on $K$-theory.  The original purpose of this paper was
to remove these restrictions.

One of the technical tools used in the proof of \cite{Winter-Z} and \cite{Lin-App} is a theorem which determines when
   two unital monomorphisms from a unital AH-algebra $C$ to a unital simple \CA\, $A$
   are  asymptotically  unitarily  equivalent.
   In the case that $A$ has tracial rank zero, it was shown in \cite{Lin-Asy} that
   two such monomorphisms are asymptotically unitarily equivalent if they induce  the
   same element in $KK(C,A)$ and the same affine map on the tracial state
   spaces, and the rotation map associated with these two monomorphisms vanishes.
   It is equally important to determine the range
   of asymptotical unitary equivalence classes of those monomorphisms. In fact,
   the above mentioned restriction
   can be removed if the range can be determined.

   The first question we need to answer is the following: Let $A$ and $C$ be two unital
   simple amenable \CA s with tracial rank zero and let $C$  satisfy
   the UCT. Suppose that $\kappa{\in} {KK}_e(C,A)^{++}$ ({ i.e.,} those $\kappa\in {KK}(C,A)$ such that
  $\kappa({K}_0(C)_+\setminus \{0\})\subseteq  {K}_0(A)_+\setminus \{0\}$  with $\kappa([1_C])=[1_A]).$  Is there a unital monomorphism $\phi: C\to A$ such that
   $[\phi]=\kappa$? It is known (see \cite{Li-KT}) that if ${K}_i(C)$ is finitely generated then the answer is affirmative.
   It was proved in \cite{LnKT} that there exists a unital monomorphism
   $\phi: C\to A$ such that
   $[\phi]-\kappa=0$ in ${KL}(C,A).$  The problem remained  open whether
   one can choose $\phi$ so that $[\phi]=\kappa$ in ${KK}(C,A).$
   It also has been known that there are several significant consequences if
   such $\phi$ can be found.

   The second question is the following: Let $\phi: C\to A$ be a unital monomorphism and $\gamma\in \mathrm{Hom}(\Kone(C), \aff(\mathrm{T}(A))).$ Can we find
   a unital monomorphism $\psi: C\to B$ with $[\psi]=[\phi]$ in $KK(C, A)$ and the associated
   rotation map is
   $\gamma?$

   In this paper, we will give affirmative answers to both questions.  Among other
   consequences, we give the following:
   Let $A$ and $B$ be two unital simple \CA s which are inductive limits of type I \CA s with unique tracial states.
   Suppose that
   $$
   ({K}_0(A), {K}_0(A)_+, [1_A], {K}_1(A))\cong({K}_0(B), {K}_0(B)_+, [1_B], {K}_1(B)).
   $$
   Then $A$ and $B$ are ${\cal Z}$-stably isomorphic (see \ref{CM2} below).

     For the first question, as mentioned above, an affirmative answer was known for $C$ with finitely generated $K$-theory. Passing to inductive limits, the first author showed in \cite{LnKT} that any strictly positive element in $KL(C, A)$ can be lift for any unital AH-algebra $C$. However, since the $KK$-functor is not continuous with respect to inductive limits, it remained somewhat mysterious how to move from
     $KL(C,A)$ to $KK(C,A)$ until a
     clue was given by a paper of Kishimoto and Kumjian (\cite{KK}), where they studied
     simple $A\T$-algebras
     with real rank zero.  The important advantage of simple $A\T$-algebras is that
     their $K$-theory are torsion free. In this case a hidden  Bott-like map  was
     revealed. Kishimoto and Kumjian navigated this hurdle using the so-called
     Basic Homotopy Lemma in simple \CA s of real rank zero. We will take the idea of Kishimoto and Kumjian
     to the case that the domain algebras are no longer assumed the same as the targets. More generally,
     we will not assume that $C$ has real rank zero, nor will we assume it is simple.
     Furthermore, we will allow $C$ to have torsion in its $K$-theory. Therefore,
     for the general case,
     the Bott-like  maps involve the $K$-theory
     with coefficient and demand a much more general Basic Homotopy Lemma. For this
     we apply the recent established
     results in \cite{LnHomtp} and \cite{Lin-Asy}. It is also interesting to note that
     the second problem is closely related to the first one and its proof are also
     closely related.

     We will  consider the case that $C$ is a general unital AH-algebra (which may have arbitrary stable rank and other properties even in the case it is simple (see, for example, \cite{Vj}, \cite{Vj2} and \cite{EV})). Denote by $\mathrm{T}_{\mathtt{f}}(C)$ the convex set of all faithful tracial states of $C$. Let $\kappa\in {KK}_e(C,A)^{++}$ and let $\lambda: \mathrm{T}(A)\to \mathrm{T}_{\mathtt{f}}(C)$ be a continuous affine map. We say that $\lambda$ is compatible with $\kappa$ if $\lambda(\tau)(p)=\tau(\kappa([p]))$ for all projections $p$ in matrix algebras of $C$ and for all $\tau\in \mathrm{T}(A).$ We actually show that for any such
compatible pair $(\kappa, \lambda),$ there exists a unital monomorphism $\phi: C\to A$ such that $[\phi]=\kappa$ and
$\phi_\mathrm{T}=\lambda,$ where $\phi_\mathrm{T}:\mathrm{T}(A)\to \mathrm{T}_{\mathtt{f}}(C)$ is the induced continuous affine map induced by $\phi.$
It worth to point out that the information $\lambda$ is essential since there are examples of compact metric spaces
$X$, unital simple AF-algebras and $\kappa\in {K}_e(C,A)^{++}$ for which there is no unital
 monomorphism $h$ such that $[h]=\kappa$ (see \cite{Lnrange}). Furthermore, given a pair $([\phi], \phi_T)$ and
 $\gamma\in \mathrm{Hom}({K}_1(C), \aff(\mathrm{T}(A))$, there is a unital monomorphism $\psi: C\to A$ such that
 $([\psi],\psi_\mathrm{T})=([\phi],\phi_\mathrm{T})$ and a rotation map from ${K}_1(C)$ to $\aff(\mathrm{T}(A))$ associated
 with $\phi$ and $\psi$ is exactly $\gamma.$

      The paper is organized as follows: Preliminaries and notation are given in Section \ref{pre}. In Section \ref{kk}, it is shown that any element {\bf $\kappa\in{KK}_e(C, A)^{++}$} for which is   compatible
with a continuous affine map $\lambda: \mathrm{T}(A)\to \mathrm{T}_{\mathtt{f}}(C)$ can be
 represented by a monomorphism $\alpha$ if $C$ is a unital AH-algebra  and $A$ is a  unital simple C*-algebra with tracial rank zero. It is also shown that if $A=C$, then $\alpha$ can be chosen as an automorphism. Then, in Section \ref{rotation},  we prove that, for any monomorphism $\iota: C\to A$, one can realize any homomorphism $\psi$ from $\Kone(C)$ to $\aff(\mathrm{T}(A))$ as a rotation map without changing the $KK$-class of $\iota$, that is, there is a monomorphism $\alpha:C\to A$ such that $[\iota]=[\alpha]$ in $KK(C, A)$ and $\tilde{\eta}_{\iota, \alpha}=\psi$. Moreover, we also give a description of the asymptotical unitary equivalence class of the maps inducing the same $KK$-element. In Section \ref{cla}, we give an application of the results in the previous sections to the classification program. Combined with the work \cite{Winter-Z} of W. Winter and
 that of \cite{Lin-Asy}, it is shown that certain $\mathcal Z$-stable \CA s can be classified by their $K$-theory information.

\vskip 2mm
\noindent{\bf Acknowledgments.} Most of this research were conducted when both authors were visiting the Fields Institute in Fall 2007. We wish to thank the Fields Institute for the excellent working environment. The research of second named author is supported by an NSERC Postdoctoral Fellowship.

\section{Preliminaries and Notation}\label{pre}

\begin{NN}\label{NN1}
Let $A$ be a unital stably finite \CA. Denote by $\mathrm{T}(A)$ the simplex of tracial
states of $A$ and denote by $\textrm{Aff(T}(A))$ the space of all real
affine continuous functions on $\mathrm{T}(A).$ Suppose that $\tau\in \mathrm{T}(A)$ is a
tracial state. We will also use $\tau$ for the trace $\tau\otimes
\mathrm{Tr}$ on $\textrm{M}_k(A)=A\otimes \textrm{M}_k(\Comp)$ (for every integer $k\ge 1$), where
$\mathrm{Tr}$ is the standard trace on $\mathrm{M}_k(\Comp).$  A trace $\tau$ is faithful if $\tau(a)>0$ for any $a\in A_+\setminus\{0\}.$ Denote by $\mathrm{T}_{\mathtt{f}}(A)$ the convex subset of $\mathrm{T}(A)$ which consists of all faithful tracial states.

Denote by $\textrm{M}_{\infty}(A)$ the set $\displaystyle{\bigcup_{k=1}^{\infty}\textrm{M}_k(A)},$ where $\textrm{M}_k(A)$ is regarded as a C*-subalgebra of $\textrm{M}_{k+1}(A)$ by the embedding $$\textrm{M}_k(A)\ni a\mapsto \left(\begin{array}{cc}a&0\\0&0\end{array}\right)\in \textrm{M}_{k+1}(A)$$

Define the positive homomorphism $\rho_A: \Kzero(A)\to \textrm{Aff(T}(A))$
by $\rho_A([p])(\tau)=\tau(p)$ for any projection $p$ in
$\mathrm{M}_k(A).$ We also denote by $\mathrm{S}(A)$ the suspension of $A$, $\mathrm{U}(A)$ the unitary group of $A$, and $\mathrm{U}_0(A)$ the connected component of $\mathrm{U}(A)$ containing the identity.

Suppose that $C$ is another unital \CA\, and $\phi: C\to A$ is a unital *-homomorphism. Denote by $\phi_{\mathrm{T}}: \mathrm{T}(A)\to \mathrm{T}(C)$
the continuous affine map induced by $\phi,$ i.e., $\phi_\mathrm{T}(\tau)(c)=\tau\circ \phi(c)$ for all $c\in C$ and $\tau\in \mathrm{T}(A).$
\end{NN}

\begin{defn}\label{D-ad}
Let $A$ be a unital C*-algebra and let $B\subseteq A$ be a unital C*-subalgebra. For any $u\in\mathrm{U}(A)$, the *-homomorphism $\mathrm{Ad}(u)$ is defined by $$\mathrm{Ad}(u): B\ni b\mapsto u^*bu\in A.$$ Denote by $\overline{\mathrm{Inn}}(B, A)$ the closure of $\{\mathrm{Ad}(u);\ u\in\mathrm{U}(A)\}$ in $\mathrm{Hom}(B, A)$ with the pointwise convergence topology. Note that $\overline{\mathrm{Inn}}(A)\subseteq\overline{\mathrm{Inn}}(B, A)$.
\end{defn}

\begin{defn}\label{D-Maf}
Let $A$ and $B$ be two unital C*-algebras, and let $\psi$ and $\phi$ be two unital *-monomorphisms from $B$ to $A$. Then the mapping torus $M_{\psi, \phi}$ is the C*-algebra defined by $$M_{\psi, \phi}:=\{f\in\textrm{C}([0, 1], A);\ f(0)=\psi(b)\ \textrm{and}\ f(1)=\phi(b)\ \textrm{for some}\ b\in B\}.$$

If $B\subseteq A$ is a unital C*-subalgebra with $\iota$ the inclusion map, then, for any unital *-monomorphism $\alpha$ from $B$ to $A$, we also denote $M_{\iota, \alpha}$ by $M_\alpha.$

For any $\psi, \phi\in\mathrm{Hom}(B, A)$, denoting by $\pi_0$ the evaluation of $M_{\psi, \phi}$ at $0$, we have the short exact sequence
\begin{displaymath}
\xymatrix{
0\ar[r]&\mathrm{S}(A)\ar[r]^{\imath}&M_{\psi,\phi}\ar[r]^{\pi_0}&B\ar[r]&0,}
\end{displaymath}
and hence the six-term exact sequence
\begin{displaymath}
\xymatrix{
\Kzero(\mathrm{S}(A))\ar[r]^{\imath_0}&\Kzero(M_{\psi, \phi})\ar[r]^{[\pi_0]_0}&\Kzero(B)\ar[d]^{[\psi]_0-[\phi]_0}\\
\Kone(B)\ar[u]^{[\psi]_1-[\phi]_1}&\Kone(M_{\psi, \phi})\ar[l]^{[\pi_0]_1}&\Kone(\mathrm{S}(A)).\ar[l]^{\imath_1}
}
\end{displaymath}
If $[\psi]_*=[\phi]_*$, (in particular, if $B\subseteq A$ and $\alpha\in\overline{\mathrm{Inn}}(B, A)$), then the six-term exact sequence above breaks down to the following two extensions:
\begin{displaymath}
\eta_0({M}_{\psi, \phi}):\quad
\xymatrix{
0\ar[r]&\Kone(A)\ar[r]&\Kzero(M_{\psi, \phi})\ar[r]&\Kzero(B)\ar[r]&0,\\
}
\end{displaymath}
and
\begin{displaymath}
\eta_1({M}_{\psi, \phi}):\quad
\xymatrix{
0\ar[r]&\Kzero(A)\ar[r]&\Kone(M_{\psi, \phi})\ar[r]&\Kone(B)\ar[r]&0.\\
}
\end{displaymath}
\end{defn}
Moreover, if $[\psi]=[\phi]$ in $KL(B, A)$, the two extensions above are pure.

\begin{NN}
Suppose that, in addition,
\beq\label{Dr-2}
\tau\circ \phi=\tau\circ \psi\tforal \tau\in \textrm{T}(A).
\eneq
For any piecewise smooth path $u(t)\in M_{\psi, \phi}$, the integral
$$
R_{\phi, \psi}(u(t))(\tau)=\frac{1}{2\pi i}\int_0^1\tau(\dot{u}(t)u^*(t))\mathrm{d}t
$$
defines an affine function on $\mathrm{T}(A),$
and it depends only on the homotopy class of $u(t)$. Therefore, it induces a map, denoted by $R_{\psi, \phi}$, from $\Kone(M_\alpha)$ to $\mathrm{Aff}(\mathrm{T}(A))$, and we call it the rotation map. The map $R_{\psi, \phi}$ is in fact a homomorphism.

\end{NN}

\begin{NN}\label{loop}
If $p$ and $q$ be two  mutually orthogonal projections in $\textrm{M}_l(A)$ for some integer $l\ge 1$,
define  a unitary $u\in \textrm{U}({\widetilde{\textrm{M}_l(\textrm{S}(A))}})$
 by
$$
u(t)=(e^{2\pi it}p+(1-p))(e^{-2\pi it}q+(1-q))\,\,\,\text{for}\,\,\, t\in [0,1].
$$
One computes that
$$
\int_0^1 \tau( {d u(t)\over{dt}}u(t)^*)dt=\tau(p)-\tau(q)\tforal t\in [0,1].
$$

Note that if $v(t)\in {\widetilde{\textrm{M}_l(\textrm{S}A)}}$ is another piecewise smooth unitary which
is homotopic to $u(t),$ then, as mentioned above,
$$
\int_0^1\tau({d v(t)\over{dt}}) dt=\tau(p)-\tau(q).
$$
It follows that, for any two projections $p$ and $q$ in $\textrm{M}_l(A),$
$$
R_{\phi, \psi}(\imath_1([p]-[q]))(\tau)=\tau(p)-\tau(q) \tforal \tau\in \textrm{T}(A).
$$
In other words,
$$
R_{\phi,
\psi}(\imath_1([p]-[q]))={{\rho_{A}([p]-[q])}}.
$$
Thus one has, exactly as in 2.2 of \cite{KK},  the following:
\end{NN}

\begin{lem}\label{DrL} When 
${\rrm{ (\ref{Dr-2})}}$ holds, the following diagram commutes:
$$
\begin{array}{ccccc}
\Kzero(A) && \stackrel{\imath_1}{\longrightarrow} && \Kone(M_{\phi, \psi})\\
& \rho_A\searrow && \swarrow R_{\phi,\psi} \\
& & \mathrm{Aff(T}(A)) \\
\end{array}
$$

\end{lem}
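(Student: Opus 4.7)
My plan is to verify commutativity by evaluating the two composite maps $R_{\phi,\psi}\circ \imath_1$ and $\rho_A$ on a generating set of $K_0(A)$ and then extending by additivity, since both are group homomorphisms into the abelian group $\mathrm{Aff}(\mathrm{T}(A))$. Every class in $K_0(A)$ can be written as $[p]-[q]$ for mutually orthogonal projections $p, q \in \textrm{M}_l(A)$ (for some $l\ge 1$), so it suffices to check the identity on such differences.

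For such a pair, I would use the explicit Bott-type unitary loop
$$u(t) = (e^{2\pi it}p + (1-p))(e^{-2\pi it}q + (1-q)), \quad t\in[0,1],$$
exhibited in \ref{loop}. Because $u(0)=u(1)=1$ and $\phi,\psi$ are unital, the loop, viewed as a path in $\textrm{M}_l(A)$ with endpoints in $\textrm{M}_l(\phi(B))=\textrm{M}_l(\psi(B))$, defines a unitary in $\widetilde{\textrm{M}_l(M_{\phi,\psi})}$ whose $K_1$-class is precisely $\imath_1([p]-[q])$ under the canonical inclusion $\mathrm{S}(A)\hookrightarrow M_{\phi,\psi}$. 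Applying the integral definition of $R_{\phi,\psi}$ to $u(t)$ and invoking the computation already carried out in \ref{loop} yields
$$R_{\phi,\psi}(\imath_1([p]-[q]))(\tau) = \frac{1}{2\pi i}\int_0^1\tau(\dot u(t)u(t)^*)\,dt = \tau(p)-\tau(q) = \rho_A([p]-[q])(\tau),$$
for every $\tau\in \mathrm{T}(A)$. This establishes the equality on generators, whence on all of $K_0(A)$ by additivity.

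The main subtlety, and the sole place where the hypothesis \textrm{(\ref{Dr-2})} enters, is the well-definedness of $R_{\phi,\psi}$ as a map on $K_1(M_{\phi,\psi})$, i.e., that the integral depends only on the homotopy class of the representing unitary path. Passing through a smooth homotopy of paths produces boundary terms at $t=0,1$ of the form $\tau\circ\phi-\tau\circ\psi$, and these vanish precisely under \textrm{(\ref{Dr-2})}. This is the content of the discussion preceding the lemma (following 2.2 of \cite{KK}), so in the written proof I would simply invoke it rather than reprove it, reducing the remaining argument to the explicit generator-level verification above.
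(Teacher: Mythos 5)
Your proposal is correct and follows essentially the same route as the paper, which proves the lemma precisely by the computation in \ref{loop}: representing $\imath_1([p]-[q])$ by the loop $(e^{2\pi it}p+(1-p))(e^{-2\pi it}q+(1-q))$, evaluating the winding integral to get $\tau(p)-\tau(q)=\rho_A([p]-[q])(\tau)$, and relying on the homotopy invariance of $R_{\phi,\psi}$ guaranteed under (\ref{Dr-2}). Your added remarks on writing every class as $[p]-[q]$, on additivity, and on where (\ref{Dr-2}) enters are accurate and only make explicit what the paper leaves implicit.
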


\begin{defn}
Let $A$ and $C$ be two unital \CA s and let $\phi, \psi: C\to A$
be two unital homomorphisms. We say that $\phi$ and $\psi$ are
asymptotically unitarily equivalent if there exists a continuous
path of unitaries $\{u(t): t\in [0, \infty)\}$ such that
$$
\lim_{t\to\infty}{\rrm{Ad}}(u(t))\circ \phi(c)=\psi(c)\tforal c\in
C.
$$

\end{defn}

\begin{defn}\label{DKL}

Let $A$ be a  unital \CA\, and let $C$ be a  separable \CA\, which satisfies the Universal Coefficient Theorem.
By \cite{DL} of D{\u a}d{\u a}rlat and Loring,
\beq\label{N2-1}
{KL}(C,A)=\mathrm{Hom}_{\Lambda}(\underline{{K}}(C), \underline{{K}}(A)),
\eneq
where, for any \CA\, $B,$
$$
\underline{{K}}(B)=({K}_0(B)\oplus{K}_1(B))\oplus( \bigoplus_{n=2}^{\infty}({K}_0(B,\Z/n\Z)\oplus{K}_1(B,\Z/n\Z) ) ).
$$
We will identify the two objects in (\ref{N2-1}). Note that one may view ${KL}(C,A)$ as a quotient of ${KK}(C,A).$

Denote by ${KL}(C,A)^{++}$ the set of those ${\bar \kappa}\in \mathrm{Hom}_{\Lambda}(\underline{{K}}(C), \underline{{K}}(A))$ such that
$$
{\bar \kappa}(\Kzero(C)_+\setminus\{0\})\subseteq \Kzero^{+}(A)\setminus \{0\}.
$$
Denote by ${KL}_e(C, A)^{++}$ the set of those elements ${\bar \kappa}\in {KL}(C,A)^{++}$ such that ${\bar \kappa}([1_C])=[1_A].$

Suppose that both $A$ and $C$ are unital and $\mathrm{T}(C)\not=\varnothing$ and $\mathrm{T}(A)\not=\varnothing.$
Let $\lambda: \mathrm{T}(A)\to \mathrm{T}(C)$ be a continuous affine map. We say $\lambda$ is compatible with ${\bar \kappa}$ if
for any projection $p\in \mathrm{M}_{\infty}(C),$ one has that $\lambda(\tau)(p)=\tau(\bar{\kappa}([p])$ for all $\tau\in \mathrm{T}(A).$

Denote by ${KLT}(C,A)^{++}$ the set of those pairs $({\bar \kappa}, \lambda),$ where $\bar{\kappa}\in {KL}_e(C,A)^{++}$
and $\lambda: \mathrm{T}(A)\to \mathrm{T}_{\mathtt{f}}(C)$ is a continuous affine map which is compatible with $\bar{\kappa}.$
\end{defn}

\begin{defn}\label{KK+}
Denote by ${KK}(C,A)^{++}$ the set of those elements $\kappa\in {KK}(C,A)$ such that its image
${\bar \kappa}$ is in ${KL}(C,A)^{++}.$ Denote by ${KK}_e(C,A)^{++}$ the set of those $\kappa\in {KK}(C,A)^{++}$ for which
${\bar \kappa}\in {KL}_e(C,A)^{++}.$

Denote by ${KKT}(C,A)^{++}$ the set of pairs $(\kappa, \lambda)$ such that
$({\bar \kappa}, \lambda)\in {KLT}(C,A)^{++}.$

\end{defn}

\begin{NN}
Let $A$ and $B$ be two unital C*-algebras. Let $h: A\to B$ be a homomorphism and $v\in\mathrm{U}(B)$ such that $$[h(g),\ v]=0\quad \textrm{for any}\ g\in A.$$ We then have a homomorphism $\overline{h}: A\otimes\mathrm{C}(\mathbb T)\to B$ by $f\otimes g\mapsto h(f)g(v)$ for any $f\in A$ and $g\in\mathrm{C}(\mathbb T)$. The tensor product induces two injectve homomorphisms:
$$\beta^{(0)}:\Kzero(A)\to\Kone(A\otimes\mathrm{C}(\mathbb T)),$$
and
$$\beta^{(1)}:\Kone(A)\to\Kzero(A\otimes\mathrm{C}(\mathbb T)).$$
The second one is the usual Bott map. Note, in this way, one writes $${K}_i(A\otimes\mathrm{C}(\mathbb T))={K}_i(A)\oplus\beta^{(i-1)}({K}_{i-1}(A)).$$ We use $\widehat{\beta^{(i)}}:{K}_i(A\otimes\mathrm{C}(\mathbb T))\to\beta^{(i-1)}({K}_i(A))$ for the projection.

For each integer $k\geq2$, one also have the following injective homomorphisms:
$$\beta_k^{(i)}:{K}_i(A, \Int/k\Int)\to{K}_{i-1}(A\otimes\mathrm{C}(\mathbb T), \Int/k\Int),\quad i=0, 1.$$ Thus, we write
$${K}_i(A\otimes\mathrm{C}(\mathbb T), \Int/k\Int)={K}_i(A, \Int/k\Int)\oplus\beta^{(i-1)}({K}_{i-1}(A), \Int/k\Int).$$ Denote by $\widehat{\beta^{(i)}_k}:{K}_i(A\otimes\mathrm{C}(\mathbb T), \Int/k\Int)\to\beta^{(i-1)}({K}_i(A), \Int/k\Int)$ similar to that of $\widehat{\beta^{(i)}}$. If $x\in\underline{{K}}(A)$, we use $\boldsymbol{\beta}(x)$ for $\beta^{(i)}(x)$ if $x\in{K}_i(A)$ and for $\beta^{(i)}_k(x)$ if $x\in{K}_i(A, \Int/k\Int)$. Thus we have a map $\boldsymbol{\beta}: \underline{{K}}(A)\to\underline{{K}}(A\otimes\mathrm{C}(\mathbb T))$ as well as $\widehat{\boldsymbol{\beta}}:\underline{{K}}(A\otimes \mathrm{C}(\mathbb T))\to\boldsymbol{\beta}( \underline{{K}})$. Therefore, we may write $\underline{{K}}( A\otimes \mathrm{C}(\mathbb T))=\underline{{K}}(A)\oplus\boldsymbol{\beta}(\underline{{K}}(A))$.

On the other hand, $\overline{h}$ induces homomorphisms $$\overline{h}_{*i, k}: {K}_i(A\otimes\mathrm{C}(\mathbb T), \Int/k\Int)\to {K}_i(B, \Int/k\Int),$$ $k=0, 2, ... ,$ and $i=0, 1$.

We use $\mathrm{Bott}(h, v)$ for all homomorphisms $\overline{h}_{*i, k}\circ\beta_k^{(i)}$, and we use $\mathrm{bott}_1(h, v)$ for the homomorphism $\overline{h}_{1, 0}\circ\beta^{(1)}:\Kone(A)\to\Kzero(B)$, and $\mathrm{bott}_0(h, v)$ for the homomorphism $\overline{h}_{0, 0}\circ\beta^{(0)}:\Kzero(A)\to\Kone(B)$. We also use $\mathrm{bott}(u, v)$ for the Bott element when $[u, v]=0$.
\end{NN}

\begin{NN}
 Given a finite subset $\mathcal P\subset\underline{{K}}(A)$, there exists a finite subset $\mathcal F\subset A$ and $\delta_0>0$ such that $$\mathrm{Bott}(h, v)|_{\mathcal P}$$ is well defined if $$\norm{[h(a), v]}<\delta_0$$ for all $a\in\mathcal F$. See 2.11 of \cite{Lin-Asy} and 2.10 of \cite{LnHomtp} for more details.
\end{NN}

\begin{defn}\label{TA0}
A unital simple C*-algebra $A$ has tracial rank zero, denoted by $\mathrm{TR}(A)=0$, if for any finite subset $\mathcal F\subset A$, any $\ep>0$, and nonzero $a\in A^+$, there are nonzero projection $p\in A$ and finite dimensional C*-subalgebra $F$ with $1_F=p$, such that
\begin{enumerate}
\item $\norm{[x,\ p]}\leq\ep$ for any $x\in\mathcal F$,
\item for any $x\in\mathcal F$, there is $x'\in F$ such that $\norm{pxp-x'}\leq\ep$, and
\item $1-p$ is Murray-von Neumann equivalent to a projection in $\overline{aAa}$.
\end{enumerate}
\end{defn}

\begin{NN}
Finally,  we will write $a\approx_{\ep} b$ if $\|a-b\|<\ep.$
\end{NN}

\section{KK-lifting}\label{kk}

Let $A$ be a unital C*-algebra. Fix $0<\delta^{\mathfrak{p}}\leq\min\{\delta_1, \delta_2\}$ where $\delta_1$ and $\delta_2$ are the constant of $\delta$ of Lemma 9.6 and Lemma 9.7
of \cite{Lin-Asy} respectively. Note that $\delta^{\mathfrak{p}}$ is universal and $\delta^{\mathfrak{p}}<1/4$ (therefore, the Bott element $\mathrm{bott}(u, v)$ is well-defined for any unitaries $u$ and $v$ with $\norm{[u, v]}<\delta^{\mathfrak{p}}$). For some integer $l\geq 1$, let $z$ be a unitary in $\mathrm{M}_l(A)$, and let $U(t)$ be a path of uniaries $U(t)\in\mathrm{C}([0, 1], \mathrm{U}(\textrm{M}_l(A)))$ with $U(0)=1$ and $\norm{[U(1), z]}\leq\delta^{\mathfrak{p}}$, we have that
$$U(1)zU^*(1)z^*=\exp(i\omega)$$
where $\omega=(1/i)(\log(U(1)zU^*(1)z^*))\in A_{s.a}$.   Define the element $\p(U,z)$ as follows:
$$
{{\p}}(U,z)(t)=\begin{cases} U((8/7)t)zU^*((8/7)t)z^* &  \tforal t\in [0,7/8];\\
                               \exp(i8(1-t)\omega) &\tforal t\in [7/8,1]\end{cases}.
                               $$
Then, $\p(U, z)(t)$ defines a loop of unitaries in $A$, and this gives a well-defined element in $\Kone(\mathrm{S}(A))\cong\Kzero(A)$. 

\begin{rem}\label{REMinverse}
For a path of uniaries $U(t)\in\mathrm{C}([0, 1], \mathrm{U}(\mathrm{M}_l(A)))$ for some integer $l\geq 1$. Denote by
\begin{displaymath}
S(t):=\left(
\begin{array}{cc}
U(t) &\\
&U^*(t)
\end{array}
\right)
\quad
\mathrm{and}
\quad
Z:=\left(
\begin{array}{cc}
z &\\
&z
\end{array}
\right).
\end{displaymath}
If $U(0)=1$ and $\norm{[U(1), z]}\leq\delta^{\mathfrak{p}}$, then $S(0)=1$ and $\norm{[S(1), Z]}\leq\delta^{\mathfrak{p}}$, and we can consider the $K$-element $[\mathtt{p}(S, Z)]$ and we have $$[\mathtt{p}(S, Z)]=[\mathtt{p}(U, z)]+[\mathtt{p}(U^*, z)].$$

Denote by
\begin{displaymath}
R(t, s):=
\left(
\begin{array}{cc}
U(t) &\\
&1
\end{array}
\right)
R(s)\left(
\begin{array}{cc}
1 &\\
&U^*(t)
\end{array}
\right)R^*(s)
\end{displaymath}
where
\begin{displaymath}
R(s)=\left(
\begin{array}{cc}
\cos(\frac{\pi s}{2})& \sin(\frac{\pi s}{2} )\\
-\sin(\frac{\pi s}{2}) & \cos(\frac{\pi s}{2})
\end{array}
\right).
\end{displaymath}
Then one has that $R(t, 0)=S(t)$, $R(t, 1)=1$, and for any $s\in[0, 1]$, $R(0, s)=1$ and $\norm{[R(1, s), Z]}\leq2\delta^{\mathfrak{p}}$. Therefore, the path $\mathtt{p}(S, Z)(t)$ is homotopic to the identity in $\mathrm{M}_2(\widetilde{\mathrm{S}(A)})$ by a small perturbation of  $$W(t, s):=R(t, s)ZR^*(t, s)Z^*,$$ and hence
$$
[\mathtt{p}(U,z)]+[\mathtt{p}(U^*,z)]=[\mathtt{p}(S,Z)]=0\,\,\,{\rrm{in}}\,\,\,\Kone(\mathrm{S}(A)).
$$
\end{rem}

\begin{lem}\label{well-defn}
Let $U(t)$ and $V(t)$ be two continuous and piecewise smooth paths of unitaries. Let $z_1$ and $z_2$ be unitaries in $A$ with $\norm{[U(1), z_1]}<\delta^{\mathfrak{p}}$ and $\norm{[V(1), z_2]}<\delta^{\mathfrak{p}}$. If $[\mathtt{p}(U,z_1)]=[\mathtt{p}(V,z_2)]$, then one has that $\mathrm{bott}(U(1), z_1)=\mathrm{bott}(V(1), z_2)$.
\end{lem}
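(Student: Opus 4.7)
The plan is to establish the stronger identity
\[
[\mathtt{p}(U,z)] \;=\; \mathrm{bott}(U(1),z) \qquad \text{in } K_0(A)
\]
for every piecewise smooth path $U$ of unitaries in $\mathrm{M}_l(A)$ with $U(0)=1$ and $\|[U(1),z]\|<\delta^{\mathfrak{p}}$. The lemma then follows in one line: applying the identity to each of the two pairs, the hypothesis turns into $\mathrm{bott}(U(1),z_1)=[\mathtt{p}(U,z_1)]=[\mathtt{p}(V,z_2)]=\mathrm{bott}(V(1),z_2)$.

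The identity itself is proved in two steps. First I show that $[\mathtt{p}(U,z)]$ depends only on the endpoint $u:=U(1)$ and on $z$, not on the particular path. Given two paths $U,V$ from $1$ to $u$, form the block unitary $S(t):=\mathrm{diag}(U(t),V^*(t))$ in $\mathrm{M}_{2l}(A)$ and $Z:=\mathrm{diag}(z,z)$. A direct block calculation yields $\mathtt{p}(S,Z)=\mathrm{diag}(\mathtt{p}(U,z),\mathtt{p}(V^*,z))$, so $[\mathtt{p}(S,Z)]=[\mathtt{p}(U,z)]+[\mathtt{p}(V^*,z)]$, and by Remark \ref{REMinverse} this equals $[\mathtt{p}(U,z)]-[\mathtt{p}(V,z)]$. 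Now apply the rotation homotopy $R(t,s)$ of Remark \ref{REMinverse}, but with $V^*$ in place of $U^*$: the same calculation as in the remark shows that this deforms $S(t)$ (through unitaries whose endpoint still almost commutes with $Z$) into $\mathrm{diag}(U(t)V^*(t),1)$, giving $[\mathtt{p}(U,z)]-[\mathtt{p}(V,z)]=[\mathtt{p}(UV^*,z)]$. The product $W:=UV^*$ is a loop at $1$, so the closing-up piece in $\mathtt{p}(W,z)$ is constant and $\mathtt{p}(W,z)$ is, up to reparametrization, the loop $t\mapsto W(t)zW^*(t)z^*$. Its class in $\pi_1(\mathrm{U}_l(A))\cong K_0(A)$ equals the sum $[W]+[zW^*z^*]$ (pointwise product on loops corresponds to addition in $\pi_1$), and since conjugation by a unitary is homotopic to the identity on $\mathrm{U}_l(A)$ (an elementary consequence of $\mathrm{U}_l(A)$ being an $H$-space), one has $[zW^*z^*]=[W^*]=-[W]$, so the total vanishes.

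Second, for the canonical path $U_0(t):=\exp(ith)$, where $h$ is a self-adjoint lift of $u=\exp(ih)$ with $\|h\|<2\pi$, the loop $\mathtt{p}(U_0,z)$ is visibly the exponential-form Bott loop used to represent $\mathrm{bott}(u,z)$ for almost-commuting unitaries in \cite{LnHomtp,Lin-Asy}, so $[\mathtt{p}(U_0,z)]=\mathrm{bott}(u,z)$; combined with path independence, this completes the identity for arbitrary admissible $U$.

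The main obstacle is keeping the rotation homotopy $R(t,s)$ within the range of admissible near-commutation at every stage. One must track $\|[R(1,s),Z]\|$ as $s$ varies and verify that it stays strictly below the threshold at which the Bott construction breaks down; the constant $\delta^{\mathfrak{p}}$ was fixed strictly below $1/4$ at the outset precisely so that the factor-of-two inflations caused by the passage to $\mathrm{M}_{2l}(A)$ remain harmless.
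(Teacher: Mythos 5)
There is a genuine gap, and it sits exactly at the point your plan treats as routine. Your Step 1 (path–independence of $[\p(U,z)]$ for a fixed endpoint) is essentially sound: the modified rotation homotopy stays within commutator norm $2\delta^{\mathfrak{p}}<1/2$, so the closing arcs vary continuously, and the class of a commutator loop $W z W^* z^*$ of a based loop $W$ does vanish in $K_1(\mathrm{S}(A))$, since $[WzW^*z^*]=[W]+[zW^*z^*]$ and conjugation by the constant unitary $z$ (a multiplier of $\mathrm{M}_l(\mathrm{S}(A))$) acts trivially on $K$-theory (your phrase ``conjugation is homotopic to the identity on $\mathrm{U}_l(A)$'' is not literally what you need, but the multiplier-unitary fact does the job). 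However, path–independence alone cannot yield the lemma, because there $U(1)\neq V(1)$ and $z_1\neq z_2$; everything rests on your Step 2, the identity $[\p(U_0,z)]=\mathrm{bott}(U(1),z)$, and this is asserted, not proved. First, $U(1)$ is only known to lie in $\mathrm{U}_0(\mathrm{M}_l(A))$, hence is a finite product of exponentials but need not be a single exponential (exponential rank obstructions), so the ``canonical path'' $\exp(ith)$ need not exist; patching this by writing $U(1)$ as a product of exponentials breaks the construction, since the intermediate endpoints need not almost commute with $z$. Second, and more seriously, in \cite{LnHomtp} and \cite{Lin-Asy} the element $\mathrm{bott}(u,z)$ for almost commuting unitaries is defined through the induced (partially defined) map on $\underline{K}(\,\cdot\otimes \mathrm{C}(\mathbb T))$, not as the class of an exponential commutator loop in $K_1(\mathrm{S}(A))$; identifying the two is a genuine theorem, and it is precisely the kind of statement that Lemmas 9.6 and 9.7 of \cite{Lin-Asy} (for which $\delta^{\mathfrak{p}}$ was calibrated) are imported to supply. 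So your proposal quietly assumes the hard part of the argument.

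For comparison, the paper never proves (or needs) your stronger identity. It sets $S(t)=\mathrm{diag}(U(t),V^*(t))$, $Z=\mathrm{diag}(z_1,z_2)$, uses the hypothesis $[\p(U,z_1)]=[\p(V,z_2)]$ together with Remark \ref{REMinverse} to conclude $[\p(S^*,Z)]=0$, deduces that this loop is null-homotopic in matrices over $\widetilde{\mathrm{S}(A)}$, and then invokes Lemma 9.6 of \cite{Lin-Asy} to pass from that null-homotopy to $\mathrm{bott}(S(1),Z)=0$, i.e.\ $\mathrm{bott}(U(1),z_1)=\mathrm{bott}(V(1),z_2)$. If you want to pursue your route, you must either prove the identification of $[\p(U_0,z)]$ with $\mathrm{bott}(U(1),z)$ (which will again require input of the strength of 9.6/9.7 of \cite{Lin-Asy}, plus a treatment of non-exponential elements of $\mathrm{U}_0$), or abandon the identity and argue as the paper does.
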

\begin{proof}
Consider the path of unitaries $$S(t)=\mathrm{diag}(U(t), V^*(t))$$ and the unitary $$Z:=\mathrm{diag}(z_1, z_2)$$ in $\mathrm{M}_2(A)$. By Remark \ref{REMinverse}, one has
$$
[\p(V^*,z_2)]+[\p(V,z_2)]=0,
$$
and hence
$$
[\p(S^*,Z)]=[\p(U^*,z_1)]+[\p(V,z_2)]=0.$$ Therefore, $\p(S^*,Z)$ is homotopic to the identity in a matrix algebra of $\widetilde{(\mathrm{S}(A))}$. Hence, passing to the matrix algebra if necessary, there is a continuous path of unitaries
$W(t, s)$
such that $W(t, 0)=1=W(t,1)$ for any $t\in[0,1]$ and $W(1, t)=\p(S^*,Z)(t)$ for any $t\in[0, 1]$, and $W(t, 1)=1$.

Note that $\p(S^*, Z)(t)=S^*({8\over{7}}t)ZS({8\over{7}}t)Z^*$ for $t\in [0, 7/8]$ and
$$
\|\p(S^*,Z)(7/8)-1\|<\delta^{\mathfrak{p}}.
$$
Thus, by Lemma 9.6 of \cite{Lin-Asy},
$$
0=\mathrm{bott}(S(1), Z)=\mathrm{bott}(U(1), z_1)-\mathrm{bott}(V(1), z_2),
$$
as desired.
\end{proof}

\begin{lem}\label{inj}
Let $U(t)$ and $V(t)$ be two continuous and  piecewise smooth pathes of unitaries. Let $z_1$ and $z_2$ be unitaries in $A$ with $\norm{[U(t), z_1]}<\delta^{\mathfrak{p}}$ and $\norm{[V(t), z_2]}<\delta^{\mathfrak{p}}$. If $\mathrm{bott}(U(1), z_1)=\mathrm{bott}(V(1), z_2)$, then one has that $[\p(U, z_1)]=[\p(V, z_2)]$.
\end{lem}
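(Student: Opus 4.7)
The plan is to mirror the argument of Lemma \ref{well-defn}, but run it in the opposite direction, using Lemma 9.7 of \cite{Lin-Asy} (the companion of Lemma 9.6, which is what governed the previous lemma) in place of Lemma 9.6. This is exactly why $\delta^{\mathfrak{p}}$ was chosen at the start of the section as the minimum of the two constants.

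First I would assemble the diagonal objects $S(t) := \mathrm{diag}(U(t), V^*(t))$ and $Z := \mathrm{diag}(z_1, z_2)$ in a matrix algebra over $A$. By Remark \ref{REMinverse} applied to $V$ (and to $S$), the $K_1$-class is additive under diagonal sums and $[\mathtt{p}(V^*, z_2)] = -[\mathtt{p}(V, z_2)]$, so
\[
[\mathtt{p}(S, Z)] \;=\; [\mathtt{p}(U, z_1)] + [\mathtt{p}(V^*, z_2)] \;=\; [\mathtt{p}(U, z_1)] - [\mathtt{p}(V, z_2)].
\]
On the Bott side, the analogous additivity together with the identity $\mathrm{bott}(V^*(1), z_2) = -\mathrm{bott}(V(1), z_2)$ gives
\[
\mathrm{bott}(S(1), Z) \;=\; \mathrm{bott}(U(1), z_1) - \mathrm{bott}(V(1), z_2),
\]
which vanishes by hypothesis. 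Moreover, $S(0) = 1$ and $\|[S(t), Z]\| < \delta^{\mathfrak{p}}$ for all $t$.

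The core step is then to show that $\mathtt{p}(S, Z)$ represents the trivial class in $K_1(\mathrm{S}(A))$. Since $\mathrm{bott}(S(1), Z) = 0$ and the commutator is controlled by $\delta^{\mathfrak{p}}$, Lemma 9.7 of \cite{Lin-Asy} produces, possibly after passing to a larger matrix algebra, a two-parameter family of unitaries $W(t, s)$ such that $W(t, 0)$ agrees with the commutator path $S(\tfrac{8}{7}t) Z S^*(\tfrac{8}{7}t) Z^*$ on $[0, 7/8]$, $W(0, s) = 1$ for all $s$, and $W(\cdot, 1)$ can be joined back to the identity through the same logarithmic interpolation $\exp(i 8(1-t)\omega)$ that already appears in the definition of $\mathtt{p}(S, Z)$ on $[7/8, 1]$. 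Splicing these pieces yields a homotopy in $\widetilde{\mathrm{S}(A)} \otimes \mathrm{M}_N$ from $\mathtt{p}(S, Z)$ to the constant loop at $1$, so $[\mathtt{p}(S, Z)] = 0$. Combined with the first paragraph this gives $[\mathtt{p}(U, z_1)] = [\mathtt{p}(V, z_2)]$.

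The point that will require the most care is the seam at $t = 7/8$: the Lemma 9.7 homotopy is produced for the commutator-type expression $S(1)ZS^*(1)Z^*$, and one must verify that it can be glued to the exponential tail $\exp(i 8(1-t)\omega)$ from the definition of $\mathtt{p}$ without losing the small-commutator control needed to recognize the result as a genuine homotopy in $K_1(\mathrm{S}(A))$. Once this matching is arranged, everything else is formal bookkeeping using Remark \ref{REMinverse}.
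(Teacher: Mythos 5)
Your proposal is correct and follows essentially the same route as the paper: form $\mathrm{diag}(U,V^*)$ with $Z=\mathrm{diag}(z_1,z_2)$, observe that the corresponding Bott element vanishes by hypothesis, apply Lemma 9.7 of \cite{Lin-Asy}, and finish with the inverse relation from Remark \ref{REMinverse}. The only difference is that the paper invokes Lemma 9.7 as directly yielding $[\p(W^*,Z)]=0$ for the diagonal path, so the seam-matching at $t=7/8$ that you flag as delicate never has to be carried out by hand.
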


\begin{proof}
Consider the path of unitaries $$W(t):=\mathrm{diag}(U(t), V^*(t))$$ and unitary $$Z:=\mathrm{diag}(z_1, z_2)$$ in $\mathrm{M}_2(A)$. Then, one has that $\mathrm{bott}(W(1), Z)=\mathrm{bott}(U(1), z_1)-\mathrm{bott}(V(1), z_2)=0$. By Lemma 9.7 of \cite{Lin-Asy}, one has $$[\p(W^*,Z)]=0,$$ and hence
\begin{equation}\label{eqn001}
[\p(U^*,z_1)]+[\p(V,z_2)]=0.
\end{equation}

On the other hand, by Remark \ref{REMinverse},
$$
[\p(V^*,z_2)]+[\p(V,z_2)]=0.$$ Together with (\ref{eqn001}), one has
$$[\p(U, z_1)]=[\p(V, z_2)],$$ as desired.
\end{proof}

\begin{NN}\label{map-gamma}

Denote by $C(A)$ the subset of $\Kone(\mathrm{S}(A))$ consisting of $[\p(U,z)(t)]$ for a path of unitaries $U(t)\in\mathrm{C}([0, 1], \mathrm{M}_l(A))$ and a unitary $z\in \mathrm{M}_l(A)$ for some $l\geq 1$ with $\norm{[U(1), z]}<\delta^{\mathfrak{p}}$. It is easy to verify that $C(A)$ is a subgroup. It follows from Lemma \ref{well-defn} and Lemma \ref{inj} that there is an injective map $\Lambda: C(A)\to\Kzero(A)$ defined by $$\Lambda: [\p(U, z)]\mapsto\mathrm{bott}(U(1), z).$$ Moreover, the map $\Lambda$ is a homomorphism.

\end{NN}

\begin{lem}\label{trace-preserve}
Consider a path of unitaries $U(t)\in\mathrm{C}([0, 1], \mathrm{M}_\infty(A))$ with $U(0)=1$ and $\norm{[z, U(1)]}<\delta^{\mathfrak{p}}$. Then we have
$$
\tau([\p(U,z)])=\tau(\mathrm{bott}_1(U(1), z))
$$
for any $\tau\in\mathrm{T}(A)$ if one considers $[\p(U,z)]$ as an element in $\Kzero(A)\cong\Kone(\mathrm{S}(A))$. In other words, $\tau(\Lambda(h))=\tau(h)$ for any $h\in C(A)$.
\end{lem}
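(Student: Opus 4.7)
My plan is to compute $\tau([\p(U,z)])$ directly via the standard rotation--integral formula for the trace on the Bott isomorphism $\Kone(\mathrm{S}(A))\cong\Kzero(A)$, namely
$$
\tau([u]) \;=\; \frac{1}{2\pi i}\int_0^1 \tau\!\left(\dot u(t)\,u(t)^*\right)\,dt
$$
for any piecewise smooth unitary loop $u(t)$, and then to match the result with the Exel--Loring trace formula for the Bott element of a nearly commuting pair. The loop $\p(U,z)$ is piecewise smooth by construction, so the integral is well defined and I split it according to the two pieces $t\in[0,7/8]$ (commutator segment) and $t\in[7/8,1]$ (exponential segment).

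On the commutator segment, after rescaling $s=(8/7)t$ it suffices to verify the pointwise identity $\tau(\dot W(s)W(s)^*)=0$, where $W(s):=U(s)zU^*(s)z^*$. Writing $X(s):=\dot U(s)U^*(s)$ and using $\tfrac{d}{ds}U^*(s)=-U^*(s)X(s)$, a direct expansion together with $z^*z=U^*U=1$ gives
$$
\dot W W^* \;=\; X \;-\; UzU^*\,X\,Uz^*U^*.
$$
Traciality cycles the conjugating unitary $UzU^*$ out of the second term, so $\tau(UzU^*XUz^*U^*)=\tau(X)$, and therefore the integrand vanishes identically. Hence the commutator segment contributes $0$.

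On the exponential segment, $\p(U,z)(t)=\exp(i8(1-t)\omega)$ gives $\dot\p\,\p^* = -8i\omega$, so $\int_{7/8}^1 \tau(\dot\p\,\p^*)\,dt = -i\tau(\omega)$, and dividing by $2\pi i$ yields
$$
\tau([\p(U,z)]) \;=\; -\tfrac{1}{2\pi}\tau(\omega).
$$
It remains to identify this with $\tau(\mathrm{bott}_1(U(1),z))$, which is done via the Exel--Loring trace formula for the Bott element of a nearly commuting pair, in the normalization implicit in Lemma 9.6 of \cite{Lin-Asy} (which already underlies Lemmas \ref{well-defn} and \ref{inj} above). The main obstacle is not analytic but bookkeeping: one must check that the sign and $2\pi$ conventions used to define $\mathrm{bott}_1$ here line up with the concrete rotation integral, after which the equality is immediate. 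Since $\Lambda$ was defined precisely by $[\p(U,z)]\mapsto\mathrm{bott}_1(U(1),z)$ and is a homomorphism, this yields $\tau\circ\Lambda=\tau$ on all of $C(A)$, as required.
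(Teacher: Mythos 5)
Your proposal is correct and follows essentially the same route as the paper: compute $\tau([\p(U,z)])$ by the rotation integral, split it at $t=7/8$, show the commutator segment contributes nothing, and identify the exponential segment's contribution $-\frac{1}{2\pi i}\tau(\log(U(1)zU^*(1)z^*))$ with $\tau(\mathrm{bott}_1(U(1),z))$ via the Exel--Loring type trace formula (the paper cites Theorem 3.6 of \cite{Lin-Asy} for this last step). The only difference is cosmetic: you kill the commutator segment by the direct pointwise computation $\tau(\dot W W^*)=\tau(X)-\tau\bigl((UzU^*)X(UzU^*)^*\bigr)=0$, where the paper instead rearranges the integrand and cites Lemma 4.2 of \cite{Lin-Asy}.
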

\begin{proof}
Denote by $\omega=U(1)zU^*(1)z^*$, and
note that $[\p(U,z)]\in \Kone(\mathrm{S}(A))=\Kzero(A).$ Denote by
$$r(t)=\exp(\log(\omega)(1-t)).$$
We then have that (see \ref{loop}), for any $\tau\in\mathrm{T}(A)$,
\begin{eqnarray*}
\tau(\p(U,z))&=& \frac{1}{2\pi i}\int_{0}^{7/8}
\tau((U((8/7)t)zU^*((8/7)t)z^*)'(zU((8/7)t)z^*U^*((8/7)t))dt\\
&&\hspace{0.5in}+\frac{1}{2\pi i}\int_{7/8}^1\tau(\dot{r}(8(t-7/8))r^*(8(t-7/8)))dt\\
&=& \frac{1}{2\pi i}\int_{0}^{7/8}
\tau((U((8/7)t)zU^*((8/7)t))'(U((8/7)t)z^*U^*((8/7)t))dt\\
&&\hspace{0.5in}+
\frac{1}{2\pi i}\int_{7/8}^1\tau(\dot{r}(8(t-7/8))r^*(8(t-7/8)))dt\\
&=&\frac{1}{2\pi i}\int_{7/8}^1\tau(\dot{r}(8(t-7/8))r^*(8(t-7/8)))dt\,\,\,\quad\textrm{(by Lemma 4.2 of \cite{Lin-Asy})}\\
&=&-\frac{1}{2\pi i}\tau(\log(w))\\
&=&\tau(\mathrm{bott}(U(1), z))\,\,\,\quad\textrm{(by Theorem 3.6 of \cite{Lin-Asy})},
\end{eqnarray*}
as desired.
\end{proof}

\begin{defn}\label{Pbott}
Let $A$ be a unital \CA\, with $\mathrm{T}(A)\not=\varnothing$. We say that $A$ has Property (B1) if the following holds: For any unitary $z\in \mathrm{U}(\textrm{M}_k(A))$ (for some integer $k\ge 1$) with ${\textrm{sp}}(z)=\T$, there is a non-decreasing function $1/4>\dt_z(t)>0$ on $[0,1]$ with $\dt_z(0)=0$ such that for any $x\in \Kzero(A)$ with $|\tau(x)|\le \dt_z(\ep)$ for all $\tau\in \mathrm{T}(A),$ there exists a unitary $u\in \mathrm{M}_k(A)$ such that
\beq\label{Pbott-1}
\|[u,\, z]\|<\min\{\ep, \frac{1}{4}\}\andeqn \text{bott}_1(u,\,z)=x.
\eneq

Let $C$ be a unital separable \CA. Let $1/4>\Delta_c(t,{\cal F}, {\cal P}_0, {\cal P}_1,h)>0$ be a function defined on $t\in [0,1],$ the family of all finite subsets ${\cal F}\subset C,$ the family of all finite subsets ${\cal P}_0\subset \Kzero(C),$ and family of all finite subsets ${\cal P}_1\subset \Kone(C),$  and the set of all unital monomorphisms  $h: C\to A.$   We say that $A$ has Property (B2) associated with $C$ and $\Delta_c$ if the following holds:

For any unital monomorphism $h: C\to A,$ any $\ep>0,$ any  finite subset ${\cal F}\subset C,$ any finite subset ${\cal P}_0\subset K_0(C),$ and any finite subset ${\cal P}_1\subset K_1(C),$ there are finitely generated subgroups $G_0\subset K_0(C)$ and $G_1\subset K_1(C)$ with $\mathcal G_0$ and $\mathcal G_1$ the sets of generators respectively and  ${\cal P}_0\subset G_0$ and ${\cal P}_1\subset G_1$ satisfying the following: for any homomorphisms $b_0: G_0\to K_1(A)$ and $b_1: G_1\to K_0(A)$ such that
\beq\label{Pbott-2}
\abs{\tau\circ b_1(g)}<\Delta_c(\ep, {\cal F}, {\cal P}_0, {\cal P}_1, h)
\eneq
for any $g\in {\cal G}_1$ and any $\tau\in\mathrm{T}(A)$, there exists a unitary $u\in \mathrm{U}(A)$ such that
\beq\label{Pbott-3}
\text{bott}_0(h, \, u)|_{{\cal P}_0}=b_0|_{{\cal P}_0},\,\,\, \text{bott}_1(h,\, u)|_{{\cal P}_1}=b_1|_{{\cal P}_1}\andeqn\\
\|[h(c),\, u]\|<\ep\tforal c\in {\cal F}.
\eneq

\end{defn}

\begin{rem}\label{B2}
Note that in the definition of Property (B2), $b_0$ and $b_1$ are defined on $G_0$ and $G_1,$ respectively, not
on the subgroups generated by ${\cal P}_0$ and ${\cal P}_1.$
One should also note that if $A$ has Property (B2) { associated with any C*-subalgebra $\textrm{C}(\mathbb{T})$}, then $A$ also has Property (B1).
\end{rem}

\begin{rem}\label{B2sym}
Let $A$ be a C*-algebra with Property (B2) associated with $C$ and $\Delta_c(t, \mathcal F, \mathcal P_0, \mathcal P_1, h)$. 
Then the function $\Delta'_c$ and the subgroups $G_0$ and $G_1$ can be chosen
so that they only depend on the unitary conjugate class of the unital embedding $h$.

Indeed, pick one representative $h_\lambda$ for each conjugate class of the unital embedding of $C$ to $A$, and define the function $$\Delta'_c(t, \mathcal F, \mathcal P_0, \mathcal P_1, h)=\Delta_c(t, \mathcal F, \mathcal P_0, \mathcal P_1, h_\lambda)$$ where $h_\lambda$ is in the unitary conjugate class of $h$. It is clear that $\Delta'_c$ only depends on the conjugate class of $h$. Let us show that $A$ has Property (B2) associated with $C$ and $\Delta'_c$ and $G_0$ and $G_1$
can be chosen so that they only depend on $h_\lambda.$

Fix a unital monomorphism $h: C\to A,$ any $\ep>0,$ a  finite subset ${\cal F}\subset C,$ a finite subset ${\cal P}_0\subset K_0(C),$ and a finite subset ${\cal P}_1\subset K_1(C)$. There is a unitary $w\in A$ such that $\textrm{Ad}(w)\circ h=h_\lambda$ for some $\lambda$.

Since $A$ has Property (B2) associated with $C$ and $\Delta_c$, there are finitely generated subgroups $G_0\subset K_0(C)$ and $G_1\subset K_1(C)$ with $\mathcal G_0$ and $\mathcal G_1$ the sets of generators respectively and  ${\cal P}_0\subset G_0$ and ${\cal P}_1\subset G_1$ satisfying the following: for any homomorphisms $b_0: G_0\to K_1(A)$ and $b_1: G_1\to K_0(A)$ such that
\beq\label{Pbott-4}
\abs{\tau\circ b_1(g)}<\Delta_c(\ep, {\cal F}, {\cal P}_0, {\cal P}_1, h_\lambda)
\eneq
for any $g\in {\cal G}_1$ and any $\tau\in\mathrm{T}(A)$, there exists a unitary $u\in \mathrm{U}(A)$ such that
\beq\label{Pbott-5}
\text{bott}_0(h_\lambda, \, u)|_{{\cal P}_0}=b_0|_{{\cal P}_0},\,\,\, \text{bott}_1(h_\lambda,\, u)|_{{\cal P}_1}=b_1|_{{\cal P}_1}\andeqn\\
\|[h_\lambda(c),\, u]\|<\ep\tforal c\in {\cal F}.
\eneq

Then,
\beq\label{Pbott-6}
&&\text{bott}_0(h,wuw^*)=\text{bott}_0(h_\lambda, \, u)|_{{\cal P}_0}=b_0|_{{\cal P}_0},\,\,\,
\text{bott}_1(h, wuw^*)=\text{bott}_1(h_\lambda,\, u)|_{{\cal P}_1}=b_1|_{{\cal P}_1}\\
&&\hspace{1in}\andeqn \|[h(c),\, wuw^*]\|<\ep\tforal c\in {\cal F}.
\eneq

In other words,  $A$ has Property (B2) associated with $\Delta'_c.$ Therefore, we can always assume that the function $\Delta_c$ and the subgroups $G_0$ and $G_1$ only depend on the conjugate classes of embeddings of $C$ into $A$.
\end{rem}

\begin{lem}\label{commutator}
Let $A$ be a unital C*-algebra which contains a
positive element $b$ with $\mathrm{sp}(b)=[0,1],$ and assume that $A$ has
Property ${\mathrm{(B1)}}$. There exists
$\delta>0$ such that for any $a\in\Kzero(A)$, if
$\abs{\tau(a)}\leq\delta$ for any $\tau\in\mathrm{T}(A)$, then one
has
$$a=[\p(U,z)]\in\Kone(\mathrm{S}(A))$$ for some unitary $z\in \mathrm{M}_k(A)$ (for some $k\ge 1$) and
some $U(t)\in\mathrm{C}([0, 1], \mathrm{U}(\mathrm{M}_k(A)))$ with $U(0)=1$
and $\norm{[U(1), z]}\leq\delta^{\mathfrak{p}}$. In other words, $a\in C(A)$.
\end{lem}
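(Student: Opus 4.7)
The overall strategy is to realize $a$ as a Bott element via Property (B1) and then repair the output so it becomes the endpoint of a path starting at the identity. Since $\mathrm{sp}(b)=[0,1]$, functional calculus gives a unitary $z:=\exp(2\pi i b)\in A$ with $\mathrm{sp}(z)=\mathbb{T}$; this $z$ will play the role of the ``target'' unitary for Property (B1). Let $\delta_{z}$ be the function supplied by Property (B1) applied to this particular $z$, and take $\delta:=\delta_{z}(\delta^{\mathfrak{p}})$.

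Given $a\in\Kzero(A)$ with $|\tau(a)|\leq\delta$ for all $\tau\in\mathrm{T}(A)$, Property (B1) then produces a unitary $u\in A$ satisfying $\|[u,z]\|<\delta^{\mathfrak{p}}$ and $\mathrm{bott}_1(u,z)=a$. The main difficulty I anticipate is that $u$ need not lie in $\mathrm{U}_0(A)$, so one cannot in general join it to $1$ by a path inside $A$ and hence cannot directly build a loop $\mathtt{p}(U,z)$ of the required form with $U(1)=u$.

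The fix is the standard $2\times 2$ trick: pass to $\mathrm{M}_2(A)$ and set $\tilde{u}:=u\oplus u^*$ and $\tilde{z}:=z\oplus 1$. The unitary $\tilde{u}$ is always null-homotopic in $\mathrm{U}(\mathrm{M}_2(A))$ via the usual rotation argument, so a continuous, piecewise smooth path $U(t)\in\mathrm{C}([0,1],\mathrm{U}(\mathrm{M}_2(A)))$ with $U(0)=1$ and $U(1)=\tilde{u}$ exists. A routine block computation gives $[\tilde{u},\tilde{z}]=[u,z]\oplus 0$, so $\|[U(1),\tilde{z}]\|<\delta^{\mathfrak{p}}$; clearly $\mathrm{sp}(\tilde{z})=\mathrm{sp}(z)\cup\{1\}=\mathbb{T}$; and additivity of the Bott element under direct sums, combined with the vanishing $\mathrm{bott}_1(u^*,1)=0$ (since the unitary $1$ induces the zero map on $\Kone(\mathrm{C}(\mathbb{T}))$), yields
$$\mathrm{bott}_1(U(1),\tilde{z})=\mathrm{bott}_1(u,z)+\mathrm{bott}_1(u^*,1)=a.$$

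To finish, I would invoke the map $\Lambda$ of \ref{map-gamma} (well-defined and injective by Lemmas \ref{well-defn} and \ref{inj}) to identify $[\mathtt{p}(U,\tilde{z})]\in\Kone(\mathrm{S}(A))\cong\Kzero(A)$ with $\mathrm{bott}_1(U(1),\tilde{z})=a$. This gives $a=[\mathtt{p}(U,\tilde{z})]\in C(A)$, as required. The substantive part of the argument is really step three: once $u$ is modified to $\tilde{u}=u\oplus u^*$, the endpoint automatically sits in the identity component while both the commutator estimate and the Bott element are preserved, which is precisely what the loop $\mathtt{p}(U,\tilde{z})$ needs.
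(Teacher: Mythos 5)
There is a genuine gap at the final step, and it is precisely the point the lemma is about. Your construction produces a path $U$ and a unitary $\tilde z=z\oplus 1$ with $\mathrm{bott}_1(U(1),\tilde z)=a$, i.e.\ an element $[\mathtt{p}(U,\tilde z)]$ of $C(A)$ whose image under the map $\Lambda$ of \ref{map-gamma} is $a$. But the lemma asserts $a=[\mathtt{p}(U,z)]$ as elements of $\Kone(\mathrm{S}(A))\cong\Kzero(A)$, i.e.\ $a\in C(A)$, not $a\in\Lambda(C(A))$. Lemmas \ref{well-defn} and \ref{inj} only show that $\Lambda$ is well defined and injective; they do not show that $\Lambda$ is the canonical identification, and nothing in the paper (nor in your argument) gives $[\mathtt{p}(U,\tilde z)]=\mathrm{bott}_1(U(1),\tilde z)$. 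Indeed, if $\Lambda$ were known to be the identity, Lemma \ref{trace-preserve} (which only proves $\tau\circ\Lambda=\tau$) would be superfluous, and the careful bookkeeping later in the proof of Theorem \ref{kk-lifting} and Theorem \ref{rotation-maps}, where $\gamma^1_n$ (a class in $\Kone(\mathrm{S}(A))$) is systematically distinguished from $\Lambda\circ\gamma^1_n$ (the bott data fed into Property (B2)), would be pointless. So invoking injectivity of $\Lambda$ to ``identify'' the loop class with the Bott element is a non sequitur; at best it shows the loop class is uniquely determined by $a$, not equal to it.

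The paper's proof goes the other way around, and this is the substantive content you are missing: it starts from the canonical loop $W_0(t)=(e^{2\pi it}p+(1-p))(e^{-2\pi it}q+(1-q))$, which represents $a=[p]-[q]$ in $\Kone(\mathrm{S}(A))$ by \ref{loop}, and then deforms it into a loop of the form $\mathtt{p}(U,z)$. Concretely, Lemma 8 of \cite{LM} is used to write $W(t)z$ approximately as conjugates $V_t z V_t^*$ of one fixed unitary $z$ with full spectrum (built from $b$); then, along a partition of $[0,1]$, Property (B1) is applied repeatedly to correct the Bott obstructions $\mathrm{bott}_1(V_{i+1}^*V_i,z)$ to zero, so that consecutive $V_i$'s can be joined by paths almost commuting with $z$; concatenating gives $U(t)$ with $\|U(t)zU^*(t)-W(t)z\|<1/4$, whence $a=[W]=[\mathtt{p}(U,z)]$ by homotopy, with $\|[U(1),z]\|<\delta^{\mathfrak p}$. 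This approximation/homotopy mechanism (and the attendant recursive choice of $\delta$ via $\Delta_{z_0}(\ep,k)$, rather than your single application of (B1)) is what makes $a$ itself a member of $C(A)$. To rescue your route you would have to prove separately that $[\mathtt{p}(U,z)]=\mathrm{bott}_1(U(1),z)$ under the canonical isomorphism (including getting the sign conventions right), which is a nontrivial statement of Basic-Homotopy-Lemma/Exel-formula type that neither you nor the paper establishes.
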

\begin{proof}

Since $A$ has Property (B1), for any $1/4>\ep>0$ and any unitary $z$ in a matrix of $A$ with $\mathrm{sp}(z)=\mathbb T$, there is a non-decreasing positive function $\delta_z(t)$ such that
if $a\in\Kzero(A)$ with $|\tau(a)|\leq \delta_z(\ep)$ for any trace $\tau$, then there is a unitary $u$ in the matrix of $A$ such that $\norm{[u, z]}<\ep$ and $\mathrm{bott}_1(u, z)=a$. 

For any $\ep$, there is $\delta_e(\ep)$ such that if $\abs{x-y}\leq\delta_e(\ep)$ with $x, y\in\{c\in\Comp;\ \abs{c-1}\leq\frac{1}{2}\}$, one has that $\abs{\log(x)-\log(y)}\leq\ep$. We also regard $\delta_e(\ep)$ as a positive function of $\ep$ with $\delta_e(0)=0$ and $\delta_e(\ep)>0$ if $\ep>0$.

For any natural number $k$ and any $\ep>0$, define
\begin{displaymath}
\Delta_z(\ep, k):=
\left\{
\begin{array}{ll}
\min\{\delta_z(\frac{1}{2}\Delta_z(\ep, k-1)),\frac{1}{2}\delta_e(\frac{1}{2}\Delta_z(\ep, k-1)), \delta^{\mathfrak{p}}, \ep\}&\textrm{if}\ k\geq 2,\\
\min\{\delta_z(\ep), \delta^{\mathfrak{p}}\}&\textrm{if}\ k=1.
\end{array}
\right.
\end{displaymath}
It is a positive function of $\ep$ and $k$ with $\Delta_z(\ep, k)>0$ if $\ep>0$.

Fix $0<\ep<1/12$. Then, there exists $m\in\mathbb{N}$ such that there there is a partition $0=t_0< t_1<\cdots<t_{m-1}<t_m=1$ such that for any $W(t)=(e^{2\pi it}p+(1-p))(e^{-2\pi it}q+(1-q))\in\mathrm{U}_\infty(\widetilde{\mathrm{S}(A)})$ where $p$ and $q$ are any
projections, one has that $\norm{W(t_{i-1})-W({t_i})}\leq\ep$ for each $1\leq i\leq m$. Fix this partition.

Let $z_0=\exp(i2\pi b).$ Since $\mathrm{sp}(b)=[0,1],$ $z_0$ is a unitary
in $A$ with $\mathrm{sp}(z_0)=\mathbb{T}$. Denote by
$$\delta=\frac{1}{2}\min\{\Delta_{z_0}(\ep, 2m), \Delta_{z_0}(\ep,
2m-1), ... , \Delta_{z_0}(\ep, 1)\}.$$

Let $a$ be an element in $\Kzero(A)$ with $\abs{\tau(a)}<\delta$
for any $\tau\in\mathrm{T}(A).$ Without loss of generality, we may
assume that $a=[p]-[q]$ for some projections $p,\, q\in
\mathrm{M}_k(A)$ for some
 integer $k\ge 1.$
  Note that
$$
\abs{\tau(p)-\tau(q)}< \dt.
$$
Put
 $W_0(t)=(e^{2\pi it}p+(1-p))(e^{-2\pi it}q+(1-q)).$
To simplify notation, by replacing $A$ by $\mathrm{M}_k(A),$ without loss of generality, we may assume that $p, q\in A$ and $W_0(t)\in A$ for each $t\in [0,1].$

%

It follows from Lemma 8 of \cite{LM} that, for some large $n\ge 1,$ there is a unitary $V_t\in M_{n+1}(A),$
for each $t\in [0,1],$
\beq\label{comm-add1}
\|V_t^*zV_t-W(t)z\|< \dt/2,
\eneq
where
$$
W(t)={\rrm{diag}}(W_0(t),\overbrace{1,1, ... , 1}^n),
$$
and
$$
z={\rrm {diag}}(z_0, \omega, \omega^2,...,\omega^n).
$$
and where $\omega=e^{2\pi i/n+1}.$

We assert that one can find unitaries $V_0, V_1, ... , V_m$ such that
$$\norm{V_izV^*_i-W({t_i})z}<\Delta_{z_0}(\ep, 2m+1-2i)\quad\textrm{for any}\ 0\leq i\leq m,$$ and $$\mathrm{bott}_1(V^*_{i+1}V_i, z)=0\quad\textrm{for any}\ 0\leq i\leq m-1.$$

Assume that the unitaries $V_0, V_1, ... , V_{i}$ satisfy the condition above. As indicated above, one can find a unitary $V_{i+1}$ such that $$\norm{V_{i+1}zV^*_{i+1}-W({t_{i+1}})z}<\min\{\frac{1}{2}\delta_e(\frac{1}{2}\Delta_{z_0}(\ep, 2m-2i)), \frac{1}{2}\Delta_{z_0}(\ep, 2m-2i-1)\},$$ and denote $$b_i:=\mathrm{bott}_1(V^*_{i+1}V_i, z).$$ By Theorem 3.6 of \cite{Lin-Asy}, one has that for any $\tau\in\mathrm{T}(A)$
\begin{eqnarray*}
\abs{\tau(b_i)}&=&\frac{1}{2\pi}\abs{\tau(\log(z^*V^*_{i+1}V_izV^*_{i}V_{i+1}))}\\
&=&\frac{1}{2\pi}\abs{\tau(\log(V_{i+1}z^*V^*_{i+1}V_izV^*_{i}))}.
\end{eqnarray*}

Note that
\begin{eqnarray*}
\norm{V_{i+1}z^*V^*_{i+1}V_izV^*_{i}- z^*W^*({t_{i+1}})W({t_{i}})z}&<&\frac{1}{2}\delta_e(\frac{1}{2}\Delta_{z_0}(\ep, 2m-2i ))+\Delta_{z_0}(\ep, 2m-2i+1)\\
&<&\delta_e(\frac{1}{2}\Delta_{z_0}(\ep, 2m-2i )).
\end{eqnarray*}
Therefore, we have $$\norm{\log(V_{i+1}z^*V^*_{i+1}V_izV^*_{i})-\log(z^*W^*({t_{i+1}})W({t_{i}})z)}<\frac{1}{2}\Delta_{z_0}(\ep, 2m-2i),$$ and hence
\begin{eqnarray*}
\abs{\tau(b_i)}&<&\frac{1}{2\pi}\abs{\tau(\log(W^*({t_{i+1}})W({t_{i}})))}+{\frac{1}{2}\Delta_{z_0}(\ep, 2m-2i)} \\
&=&\frac{1}{2\pi}(t_{i+1}-t_i)\abs{(\tau(p)-\tau(q))}+ {\frac{1}{2}\Delta_{z_0}(\ep, 2m-2i)}\\
&<&\Delta_{z_0}(\ep, 2m-2i)< \delta_{z_0}(\frac{1}{2}\Delta_{z_0}(\ep, 2m-2i-1)) .
\end{eqnarray*}
Since $A$ has Property (B1), there is a unitary $v'_{i+1}\in A$ such that
$$\norm{v'_{i+1}z_0(v'_{i+1})^*-z_0}<\frac{1}{2}\Delta_{z_0}(\ep, 2m-2i-1)\quad\mathrm{and}\quad\mathrm{bott}_1(v'_{i+1}, z_0)=b_i.$$
Set $$v_{i+1}=\mathrm{diag}(v_{i+1}', 1,...,1).$$ We then have
$$\norm{v_{i+1}zv_{i+1}^*-z}= \norm{v'_{i+1}z_0(v'_{i+1})^*-z_0} <\frac{1}{2}\Delta_{z_0}(\ep, 2m-2i-1)$$ and
$$\mathrm{bott}_1(v_{i+1}, z)=\mathrm{bott}_1(v'_{i+1}, z_0)=b_i.$$
Therefore
\begin{eqnarray*}
\norm{(V_{i+1}v_{i+1})z(v^*_{i+1}V^*_{i+1})-W(t_{i+1})z}&<&\frac{1}{2}\Delta_{z_0}(\ep, 2m-2i-1)+\norm{V_{i+1}zV^*_{i+1}-W(t_{i+1})z}\\
&<&\Delta_{z_0}(\ep, 2m-2i-1)
\end{eqnarray*}
and $$\mathrm{bott}_1((v^*_{i+1}V^*_{i+1})V_{i}, z)= \mathrm{bott}_1(v^*_{i+1}, z)+\mathrm{bott}_1(V^*_{i+1}V_{i}, z)=0.$$
By replacing $V_{i+1}$ by $V_{i+1}v_{i+1}$, we proved the assertion.

Then, for each $V_i$ and $V_{i+1}$, there is a path of unitary $V^{(i)}(t)$ such that
$$
 \|[V^{(i)}(t), z]\|<\ep,\ V^{(i)}(0)=1,\
V^{(i)}(1)=V_{i+1}^*V_i.
$$
By setting $U^{(i)}(t)=V_{i+1}V^{(i)}({1-t})$, we have that $$U^{(i)}(0)=V_i,\ U^{(i)}(1)=V_{i+1},$$ and for each $t$,
\begin{eqnarray*}
\norm{U^{(i)}(t)z(U^{(i)}(t))^*-W(t)z}&=&\norm{V_{i+1}V^{(i)}({1-t})z(V^{(i)}({1-t}))^*V_{i+1}^*-W(t)z}\\
&<&\ep+ \norm{V_{i+1}zV_{i+1}^*-W(t)z}\\
&\leq&2\ep\leq\frac{1}{4}.
\end{eqnarray*}
By connecting all $U^{(i)}(t)$, we get a path of unitary $U(t)$ such that for any $t$, $$\norm{U(t)zU^*(t)-W(t)z}<\frac{1}{4}.$$ In particular, $$a=[W(t)]_1=[\p(U,z)].$$ Moreover, $$\norm{[U(1), z]}=\norm{[V_m, z]}<\delta^{\mathfrak{p}},$$ as desired.
\end{proof}



\begin{cor}\label{Ccomm}
Let $A$ be a unital separable simple \CA\ with $\mathrm{TR}(A)=0.$ Then
there exists $\delta>0$ such that for any element $a\in\Kzero(A)$, if
$\abs{\tau(a)}\leq\delta$ for any $\tau\in\mathrm{T}(A)$, then one
has
$$a=[\p(U,z)]\in\Kone(\mathrm{S}(A))$$ for some unitary $z\in \mathrm{M}_k(A)$ and
some $U(t)\in\mathrm{C}([0, 1], \mathrm{U}(\mathrm{M}_k(A)))$ (for some $k\ge
1$) with $U(0)=1$ and $\norm{[U(1), z]}<\delta^{\mathfrak{p}}$.
\end{cor}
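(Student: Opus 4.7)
The plan is to simply verify that any unital separable simple C*-algebra $A$ with $\mathrm{TR}(A)=0$ satisfies the two hypotheses of Lemma \ref{commutator}, and then quote that lemma. So I need to check (i) $A$ contains a positive element $b$ with $\mathrm{sp}(b)=[0,1]$, and (ii) $A$ has Property (B1).

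For (i), if $A$ is finite dimensional then $A \cong \textrm{M}_n(\Comp)$ by simplicity, in which case the conclusion is essentially trivial: $\rho_A$ is injective on $K_0(A) \cong \Z$, so $|\tau(a)|\le \delta$ for sufficiently small $\delta$ forces $a=0$, which is tautologically in $C(A)$. Otherwise $A$ is infinite dimensional, simple, and unital, hence contains an abelian C*-subalgebra isomorphic to $\mathrm{C}([0,1])$ (this is standard: by simplicity $A$ has no minimal projections, so one can build a self-adjoint element whose spectrum is a perfect set and by functional calculus obtain $b$ with $\mathrm{sp}(b)=[0,1]$). Take such a $b$.

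For (ii), this is the main point. Property (B1) asks, for each unitary $z\in \textrm{M}_k(A)$ with $\mathrm{sp}(z)=\T$, for a function $\dt_z(\ep)$ such that any $x\in K_0(A)$ with $|\tau(x)|\le \dt_z(\ep)$ is realized as $\mathrm{bott}_1(u,z)$ for some almost-commuting unitary $u$ with $\|[u,z]\|<\ep$. For simple unital $A$ with $\mathrm{TR}(A)=0$, this is exactly (a special case of) the ``Exel-type'' / Basic Homotopy Lemma style result that has been established in \cite{Lin-Asy} (and its companion \cite{LnHomtp}); indeed the constants $\delta_1,\delta_2$ defining $\delta^{\mathfrak p}$ come from Lemmas 9.6 and 9.7 there. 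I would simply invoke that result to obtain the function $\dt_z$, and then shrink it if necessary so that the unitary $u$ produced also satisfies $\|[u,z]\|<\min\{\ep,1/4\}$, which is the requirement in Definition \ref{Pbott}.

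With (i) and (ii) in place, Lemma \ref{commutator} applies directly and yields the desired constant $\dt>0$: any $a\in K_0(A)$ with $|\tau(a)|\le \dt$ for every $\tau\in \mathrm{T}(A)$ can be written as $[\p(U,z)]$ for suitable $z\in \textrm{M}_k(A)$ and $U(t)\in \mathrm{C}([0,1],\mathrm{U}(\textrm{M}_k(A)))$ with $U(0)=1$ and $\|[U(1),z]\|<\dt^{\mathfrak p}$. The only real obstacle is (ii), i.e.\ correctly identifying Property (B1) as a consequence of the simple $\mathrm{TR}(A)=0$ hypothesis via the results of \cite{Lin-Asy}; once that identification is made, the corollary is a direct translation of Lemma \ref{commutator}.
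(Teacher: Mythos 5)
Your proposal is correct and follows essentially the same route as the paper, which simply invokes Lemma 5.2 of \cite{Lin-Asy} to get Property (B1) and then applies Lemma \ref{commutator}. Your additional verification that $A$ contains a positive element with spectrum $[0,1]$ (and your separate treatment of the finite-dimensional case) is sound and merely fills in a hypothesis the paper leaves implicit.
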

\begin{proof}
By Lemma 5.2 of \cite{Lin-Asy}, $A$ has Property (B1). Then, the statement follows from Lemma \ref{commutator}.
\end{proof}

\begin{cor}\label{Ccomm1}
Let $A$ be a unital separable simple \CA\ with $\mathrm{TR}(A)=0.$ Then $C(A)=\Kzero(A)$.
\end{cor}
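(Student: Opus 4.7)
The plan is to use Corollary \ref{Ccomm} to handle elements of $\Kzero(A)$ whose trace is sufficiently small on all of $\mathrm{T}(A)$, and to reduce the general case to this one by decomposing an arbitrary $K_0$-class into many small pieces, exploiting that $C(A)$ is a subgroup of $\Kzero(A)\cong\Kone(\mathrm{S}(A))$.

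First I would fix $\delta>0$ as provided by Corollary \ref{Ccomm}, so that whenever $b\in\Kzero(A)$ satisfies $|\tau(b)|\le\delta$ for every $\tau\in\mathrm{T}(A)$, we have $b\in C(A)$. Given an arbitrary $a\in\Kzero(A)$, write $a=[p]-[q]$ with $p,q$ projections in some $\mathrm{M}_k(A)$. Because $\mathrm{T}(A)$ is a compact simplex and the functions $\tau\mapsto\tau(p),\ \tau\mapsto\tau(q)$ are continuous on $\mathrm{T}(A)$, their suprema are finite; choose an integer $n$ with $(\sup_\tau\tau(p)+\sup_\tau\tau(q))/n<\delta$.

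The main technical step is to decompose $p$ (and similarly $q$) as a sum of $n$ mutually orthogonal projections whose traces are uniformly small. Since $A$ is unital, simple and has $\mathrm{TR}(A)=0$, it has real rank zero and a comparison theory on projections that is controlled by traces. Iterating the standard fact that in such an algebra, given any projection $r$ and any $\eta>0$ one can find a nonzero subprojection $r'\le r$ with $\tau(r')<\eta$ for all $\tau\in\mathrm{T}(A)$, I would produce mutually orthogonal projections $p_1,\dots,p_n\in\mathrm{M}_k(A)$ with $p=p_1+\cdots+p_n$ and $\tau(p_i)<\delta/2$ for all $\tau\in\mathrm{T}(A)$ and all $i$, and similarly $q=q_1+\cdots+q_n$ with $\tau(q_i)<\delta/2$. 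Setting $a_i=[p_i]-[q_i]\in\Kzero(A)$ gives
\[
a=\sum_{i=1}^n a_i,\qquad |\tau(a_i)|\le\tau(p_i)+\tau(q_i)<\delta\quad\text{for all }\tau\in\mathrm{T}(A).
\]

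By the choice of $\delta$ each $a_i$ lies in $C(A)$, and since $C(A)$ is a subgroup of $\Kzero(A)$ (as noted in \ref{map-gamma}), it follows that $a\in C(A)$. As $a$ was arbitrary we conclude $C(A)=\Kzero(A)$.

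I expect the only non-routine point is the decomposition $p=p_1+\cdots+p_n$ with uniformly small traces; however, this is a well-known consequence of simplicity together with real rank zero (hence available for algebras with tracial rank zero), so the reduction via Corollary \ref{Ccomm} and the subgroup property of $C(A)$ will finish the argument.
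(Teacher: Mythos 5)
Your overall strategy---split $a$ into finitely many classes of uniformly small trace, apply Corollary \ref{Ccomm} to each, and use that $C(A)$ is a subgroup---is exactly the skeleton of the paper's argument. The gap is in the step you yourself flag: producing $p=p_1+\cdots+p_n$ with $\tau(p_i)<\delta/2$ for all $\tau\in\mathrm{T}(A)$. Iterating ``every nonzero projection dominates a nonzero subprojection of uniformly small trace'' does not yield this: each step removes a piece with no lower bound on its trace, so nothing forces the remainder to shrink, and the process need not exhaust $p$ in finitely many steps (the removed traces could tend to $0$ while the remainder stays large). To make a projection-level decomposition work you need pieces whose traces are bounded \emph{below} as well as above, uniformly on $\mathrm{T}(A)$; that is a comparison/divisibility statement, e.g.\ combine the density of $\rho_A(K_0(A))$ in $\mathrm{Aff}(\mathrm{T}(A))$ with Blackadar-type comparability (both available because $\mathrm{TR}(A)=0$) to find inside $p$ a subprojection whose trace lies in a prescribed band, and then induct with a count on how much trace each step removes. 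In particular, this is not simply ``a well-known consequence of simplicity together with real rank zero''; what is really used is the comparison theory that tracial rank zero provides.

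The paper sidesteps projections altogether: it invokes that $K_0(A)$ is tracially approximately divisible (Definition \ref{tadivisible}), which directly gives $a=a_1+\cdots+a_n$ with $\abs{\tau(a_i)}<\delta$ at the level of $K_0$-classes---note that Corollary \ref{Ccomm} only requires a trace bound on the class, not an orthogonal decomposition of actual projections. If you replace your iteration by this group-level divisibility (or by the comparison argument sketched above), the rest of your proof is correct and then coincides with the paper's.
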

\begin{proof}
Denote by $\delta$ the constant of Corollary \ref{Ccomm}. For any $a\in\Kzero(A)$, since $\Kzero(A)$ is tracially approximately divisible (see Definition \ref{tadivisible}), one has $$a=a_1+a_2+\cdots+a_n$$ with $\abs{\tau(a_i)}<\delta$ for each $1\leq i\leq n$. Therefore, $a_i\in C(A)$. Since $C(A)$ is a group, one has that $a\in C(A)$, as desired.
\end{proof}

\begin{defn}\label{DR}
 For any unitary $u$ in a C*-algebra $A$, denote by $R(u, t)$ the
unitary path in $\mathrm{M}_2(A)$ defined by
\begin{displaymath}
R(u, t):=\left(
\begin{array}{cc}
u &0\\
0 &1
\end{array}
\right)
\left(
\begin{array}{cc}
\cos(\frac{\pi t}{2})& \sin(\frac{\pi t}{2} )\\
-\sin(\frac{\pi t}{2}) & \cos(\frac{\pi t}{2})
\end{array}
\right)
\left(
\begin{array}{cc}
u^*& 0\\
0 & 1
\end{array}
\right)
\left(
\begin{array}{cc}
\cos(\frac{\pi t}{2})& -\sin(\frac{\pi t}{2} )\\
\sin(\frac{\pi t}{2}) & \cos(\frac{\pi t}{2})
\end{array}
\right),\,\,\,\,\,\,t\in [0,1].
\end{displaymath}
Note that $$R(u,
0)=\left(\begin{array}{cc}1&0\\0&1\end{array}\right)\quad\mathrm{and}\quad
R(u, 1)=\left(\begin{array}{cc}u&0\\0&u^*\end{array}\right) $$
\end{defn}

\begin{lem}\label{decomp}
Let $A$ be a unital C*-algebra. Let $B\subseteq A$ be a unital separable C*-subalgebra. Write $$\Kzero(B)_+=\{k_1, k_2,  ... , k_n, ... \}$$ and $$\Kone(B)=\{h_1, h_2,  ... , h_n,  ... \},$$ and denote by  $K_n=<k_1, ... ,k_n>$ the subgroup generated by $\{k_1, ... ,k_n\}$, and by $H_n=<h_1, ... ,h_n>$ the subgroup generated by
$\{h_1, ... ,h_n\}.$  Let $\{\F_i\}$ be an increasing family of finite subsets whose union is dense in $B$. Assume that for each $i$, there exist a projection $p_i\in\mathrm{M}_{r_i}(\F_i)$ and a unitary $z_i\in\mathrm{M}_{r_i}(\F_i)$ with $[p_i]_0=k_i$ and $[z_i]_1=h_i$. Let $\{u_n\}$ be a sequence of unitaries such that
$$
\norm{[u_{n+1}^{(r_n)},\, a]}\leq\frac{\delta^{\mathfrak{p}}}{2^{n+1}}
$$
for any $a\in\mathrm{M}_{r_n}(w_n^*\F_nw_n)$, where $w_n=u_1\cdots u_n$ and
$u_k^{(r_j)}=\rrm{diag}(\overbrace{u_k,u_k,...,u_k}^{r_j}).$
 Then $$\alpha=\lim_{n\to\infty}\mathrm{Ad}(w_n)$$ defines a monomorphism from $B$ to $A$, and the extensions $\eta_0(M_\alpha)$ and $\eta_1(M_\alpha)$ are determined by the inductive limits

\vskip 3mm
\xy
(0,0)*{~}="0";
(20,15)*{0}="1";
(40,15)*{K_1(A)}="2";
(70,15)*{K_1(A)}="3";
(78,15)*{\oplus}="4";
(86,15)*{K_{n+1}}="5";
(110,15)*{K_{n+1}}="6";
(130, 15)*{0}="7";
(20,0)*{0}="1l";
(40,0)*{K_1(A)}="2l";
(70,0)*{K_1(A)}="3l";
(78,0)*{\oplus}="4l";
(86,0)*{K_{n}}="5l";
(110,0)*{K_{n}}="6l";
(130, 0)*{0}="7l";
{\ar "1";"2"};
{\ar "2";"3"};
{\ar "5";"6"};
{\ar "6";"7"};
{\ar "1l";"2l"};
{\ar "2l";"3l"};
{\ar "5l";"6l"};
{\ar "6l";"7l"};
{\ar@{=} "2l";"2"};
{\ar@{=} "3l";"3"};
{\ar_{\iota_{n, n+1}} "5l";"5"};
{\ar_{\iota_{n, n+1}} "6l";"6"};
{\ar_{\gamma^{0}_n} "5l";"3"};
\endxy
\vskip 3mm

\noindent and

\vskip 3mm
\xy
(0,0)*{~}="0";
(20,15)*{0}="1";
(40,15)*{K_0(A)}="2";
(70,15)*{K_0(A)}="3";
(78,15)*{\oplus}="4";
(86,15)*{H_{n+1}}="5";
(110,15)*{H_{n+1}}="6";
(130, 15)*{0}="7";
(20,0)*{0}="1l";
(40,0)*{K_0(A)}="2l";
(70,0)*{K_0(A)}="3l";
(78,0)*{\oplus}="4l";
(86,0)*{H_{n}}="5l";
(110,0)*{H_{n}}="6l";
(130, 0)*{0}="7l";
{\ar "1";"2"};
{\ar "2";"3"};
{\ar "5";"6"};
{\ar "6";"7"};
{\ar "1l";"2l"};
{\ar "2l";"3l"};
{\ar "5l";"6l"};
{\ar "6l";"7l"};
{\ar@{=} "2l";"2"};
{\ar@{=} "3l";"3"};
{\ar_{\iota_{n, n+1}} "5l";"5"};
{\ar_{\iota_{n, n+1}} "6l";"6"};
{\ar_{\gamma^{1}_n} "5l";"3"};
\endxy

\vskip 3mm

\noindent respectively, where $\gamma_n^0:K_n\to \Kone(A)$ is defined by $k_i\mapsto[(w_n^*p_iw_n^*)u_{n+1}(w_n^*p_iw_n)+(1-w_n^*p_iw_n)]$ and $\gamma_n^1:H_n\to \Kzero(A)$ is defined by $h_i\mapsto[\p(R^*(u_{n+1}, t), w_n^*z_iw_n)]$.
\end{lem}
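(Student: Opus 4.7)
My plan is to first establish that $\alpha := \lim_n \mathrm{Ad}(w_n)$ is a well-defined unital monomorphism $B \to A$, and then to identify both extensions $\eta_0(M_\alpha)$ and $\eta_1(M_\alpha)$ by exhibiting, at each finite level $n$, explicit lifts of the generators of $K_*(B)$ to $K_*(M_\alpha)$ whose obstructions to being globally defined are precisely the Bott-type classes $\gamma_n^0$ and the $\mathtt{p}$-classes $\gamma_n^1$ appearing in the statement.

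For the first point, given $b \in \mathcal{F}_n$ and any $m \geq n$, the hypothesis gives
\[
\|\mathrm{Ad}(w_{m+1})(b) - \mathrm{Ad}(w_m)(b)\| = \|[u_{m+1}, w_m^* b w_m]\| \leq \delta^{\mathfrak{p}}/2^{m+1},
\]
so by density of $\bigcup_n \mathcal{F}_n$ in $B$ the limit $\alpha$ exists as an isometric (hence injective) $*$-homomorphism. Since $\alpha \in \overline{\mathrm{Inn}}(B, A)$ by construction, $[\iota]_* = [\alpha]_*$, and the six-term sequence for $M_\alpha$ decomposes into the two extensions $\eta_0$ and $\eta_1$ of Definition \ref{D-Maf}.

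For the $K_0$ side, fix a generator $k_i = [p_i] \in K_n$; for $m \geq i$, the estimate $\|[u_{m+1}, w_m^* p_i w_m]\| \leq \delta^{\mathfrak{p}}/2^{m+1}$ lets us produce, via functional calculus on the almost-commuting pair, a continuous homotopy of projections from $w_m^* p_i w_m$ to $w_{m+1}^* p_i w_{m+1} = u_{m+1}^*(w_m^* p_i w_m) u_{m+1}$, and concatenating these across $m$ gives a projection-valued path $[0,1] \to M_\infty(A)$ joining $p_i$ to $\alpha(p_i)$, which lifts $k_i$ to $K_0(M_\alpha)$. The obstruction to assembling this lift naively one step at a time is the standard Bott class $\mathrm{bott}_0(w_m^* p_i w_m, u_{m+1}) = [(w_m^* p_i w_m) u_{m+1} (w_m^* p_i w_m) + (1 - w_m^* p_i w_m)] \in K_1(A)$, which is exactly $\gamma_m^0(k_i)$; recording these obstructions in the commuting ladder of the statement then identifies $\eta_0(M_\alpha)$ with the claimed inductive limit. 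For $\eta_1$ the argument is parallel: for $h_i = [z_i]$ the interpolating unitary path $t \mapsto R^*(u_{m+1}, t)\,\mathrm{diag}(w_m^* z_i w_m, 1)\,R(u_{m+1}, t)$ (see Definition \ref{DR}) joins $\mathrm{diag}(w_m^* z_i w_m, 1)$ to $\mathrm{diag}(w_{m+1}^* z_i w_{m+1}, 1)$ in a matrix amplification, and its obstruction to defining a globally unitary element of $M_\alpha$ lies in $K_1(\mathrm{S}(A)) \cong K_0(A)$ and equals $[\mathtt{p}(R^*(u_{m+1}, t), w_m^* z_i w_m)] = \gamma_m^1(h_i)$, by the very definition of the $\mathtt{p}$-construction together with Lemmas \ref{well-defn} and \ref{inj}.

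The main obstacle I anticipate is verifying that the ladders in the statement genuinely compute $K_*(M_\alpha)$, rather than some looser approximation of it. I would handle this by realizing $M_\alpha$ as an inductive limit whose building blocks are the partial mapping tori $M_{\iota, \mathrm{Ad}(w_n)}$ restricted to the C*-subalgebras generated by $\mathcal{F}_n$, checking that the inclusions between consecutive blocks induce on $K$-theory precisely the maps $\iota_{n, n+1}$ corrected by the Bott/$\mathtt{p}$ contribution $\gamma_n^*$, and then invoking continuity of $K$-theory to pass to the limit. The quantitative bound $\delta^{\mathfrak{p}}/2^{n+1}$ is exactly what guarantees that the local Bott and $\mathtt{p}$ invariants are well-defined at every stage and compatible with the connecting maps of the telescope.
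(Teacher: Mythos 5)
Your opening steps (convergence of $\mathrm{Ad}(w_n)$, injectivity of $\alpha$, and the splitting of the six-term sequence because $[\alpha]=[\imath]$ in $KL(B,A)$) match the paper, and your local picture is the right one: the step-$n$ corrections are $\mathrm{bott}_0(w_n^*p_iw_n,u_{n+1})$ and $[\p(R^*(u_{n+1},t),w_n^*z_iw_n)]$. The genuine gap is precisely in the step you flag as the main obstacle. You propose to prove that the displayed ladders compute $K_*(M_\alpha)$ by realizing $M_\alpha$ as a C*-inductive limit whose building blocks are the mapping tori $M_{\iota,\mathrm{Ad}(w_n)}$ restricted to $C^*(\F_n)$, and then invoking continuity of $K$-theory. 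That device does not work: those partial mapping tori are not subalgebras of $M_\alpha$, and there are no natural connecting $*$-homomorphisms between them whose limit is $M_\alpha$ (a path with endpoints $b$ and $w_n^*bw_n$ cannot be canonically turned into one with endpoints $b$ and $w_{n+1}^*bw_{n+1}$, let alone $\alpha(b)$). The honest exhausting subalgebras of $M_\alpha$ are the tori $M_{\iota|_{B_n},\,\alpha|_{B_n}}$ with $B_n=C^*(\F_n)$, but their $K$-theory is not of the displayed form: it involves the unknown groups $K_*(B_n)$ rather than $K_n$ and $H_n$, and for these blocks the six-term sequence need not degenerate, since nothing guarantees $[\iota|_{B_n}]_*=[\alpha|_{B_n}]_*$ on $K_*(B_n)$. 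So the crucial identification of $\eta_i(M_\alpha)$ with the stated inductive limits is asserted rather than proved.

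The paper avoids C*-level inductive limits altogether and works at the level of groups: for each $n$ it writes down explicit homomorphisms $\psi_n^{(0)}\colon \Kone(A)\oplus K_n\to\Kzero(M_\alpha)$ and $\psi_n^{(1)}\colon \Kzero(A)\oplus H_n\to\Kone(M_\alpha)$ using concrete paths $P_i^{(n)}$, $V_i^{(n)}$ (the rotation $R(w_n,2t)$ on $[0,1/2]$ followed by a small homotopy from $\alpha_n(p_i)$ or $\alpha_n(z_i)$ to $\alpha(p_i)$ or $\alpha(z_i)$ on $[1/2,1]$), verifies the compatibility $\psi_{n+1}^{(i)}\circ\psi_{n,n+1}^{(i)}=\psi_n^{(i)}$, proves the induced map on the algebraic limit is injective (each $\psi_n^{(i)}$ is injective because it is split by $[\pi_0]$ together with the split inclusion of $K_*(\mathrm{S}(A))$) and surjective (an arbitrary projection or unitary in $M_\alpha$ is compared with the model path via a path of partial isometries), and then carries out a genuine computation---the $Z(t)$, $e(t)$ perturbation argument---to identify $[P_i^{(n+1)}]-[P_i^{(n)}]$ in $\Kzero(\mathrm{S}(A))\cong\Kone(A)$ with the class of $(w_n^*p_iw_n)u_{n+1}(w_n^*p_iw_n)+(1-w_n^*p_iw_n)$, and likewise identifies the $K_1$-correction with the $\p$-class. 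These are exactly the ingredients your proposal compresses into ``recording these obstructions identifies $\eta_0(M_\alpha)$''; to repair your argument you would need to replace the C*-inductive-limit claim by such finite-level group homomorphisms into $K_*(M_\alpha)$, prove bijectivity of the limit map, and supply the explicit computation of the connecting corrections.
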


\begin{proof}
It is clear that we may assume that $r_n\le r_{n+1},$ $n=1,2,...$

 Note that for any $a\in\F_n$,
\begin{eqnarray*}
\norm{\mathrm{Ad}(w_{n+k})(a)-\mathrm{Ad}(w_n)(a)}&\leq&\sum_{i=1}^k\norm{\mathrm{Ad}(u_1u_2\cdots u_{n+i})(a)-\mathrm{Ad}(u_1u_2\cdots u_{n+i-1})(a)}\\
&\leq&\sum_{i=1}^k\frac{1}{2^{n+i}}\leq\frac{1}{2^n}.
\end{eqnarray*}
Therefore $\lim\mathrm{Ad}(w_n)$ exists if $n\to\infty$. Denote by $\alpha_n=\mathrm{Ad}(w_n)$, and denote its limit by $\alpha$. Note that $\alpha$ is a monomorphism.
Moreover,
\beq\label{decom-add1}
[\alpha]=[\imath]\,\,\,\text{in}\,\,\,{KL}(B, A),
\eneq
where $\imath: B\to A$ is the embedding.
It follows that the six-term exact sequence in \ref{D-Maf} splits.

To simplify notation, without loss of generality, in what follows, we may replace $\alpha\otimes \mathrm {id}_{\mathrm{M}_{r_i}}$ by $\alpha,$ $\alpha_n\otimes \mathrm{id}_{\mathrm{M}_{r_i}}$ by $\alpha_n,$ $u_n^{(r_n)}$ by $u_n$, and $w_n^{(r_n)}$ by
$w_n$ respectively (and write $p_i\in A$).

Consider $\Kzero(M_\alpha(A))$ first. Fix $n$, and note that $$\norm{\alpha_n(p_i)-\alpha(p_i)}\leq\frac{1}{4},\quad\textrm{for any}\ 1\leq i\leq n.$$ Therefore, for each $i$, there is a unitary $v_i$ with $\norm{v_i-1}\leq\frac{1}{2}$ such that $\alpha(p_i)=v_i^*\alpha_n(p_i)v_i$. In particular, there is a path $r^{(n)}_i(t)$ of projection with $r^{(n)}_i(0)=\alpha_n(p_i)$, $r^{(n)}_i(1)=\alpha(p_i)$, and $\norm{r^{(n)}_i(t)-\alpha(p_i)}\leq\frac{1}{2}$. Then there is a homomorphism $\psi_{n}^{(0)}: \Kone(A)\oplus K_n\to\Kzero(M_{\alpha})$ defined by $$\psi_{n}^{(0)}: ([Q(t)], [p_i])\mapsto [Q(t)]+[P^{(n)}_i(t)],$$ where $P^{(n)}_i(t)$ is the path
\begin{displaymath}
P^{(n)}_i(t)=\left\{
\begin{array}{ll}
R^*(w_n, 2t)p_iR(w_n, 2t)& 0\leq t\leq \frac{1}{2}\\
r^{(n)}_i(2t-1)& \frac{1}{2}\leq t\leq 1,
\end{array}
\right.
\end{displaymath}
and $[Q(t)]\in \Kzero(\mathrm{S}(A))$.
Since $[\pi]_{0}\circ \psi_n^{(0)}={\mathrm{id}}_{K_n}, $ the map $\psi_n^{(0)}$ is injective.

We then have
\beq\label{decomp-add2}
\psi_{n+1}^{(0)}([Q(t)], [p_i])-\psi_{n}^{(0)}([Q(t)], [p_i])=
[P^{(n+1)}_i(t)]-[P^{(n)}_i(t)]\in\Kzero(\mathrm{S}(A)).
\eneq
Moreover, it is easy to see that
$$[P^{(n+1)}_i(t)]-[P^{(n)}_i(t)]=[R^*(u_{n+1},
t)w^*_np_iw_nR(u_{n+1},
t)]-[w^*_np_iw_n]\in\Kzero(\mathrm{S}(A)).$$

Define homomorphism $\psi_{n, n+1}^{(0)}:\Kone(A)\oplus K_n\to\Kone(A)\oplus K_{n+1}$
by
$$
\psi_{n, n+1}^{(0)}: (Q(t), [p_i(t)])\mapsto
([Q(t)]+([P^{(n)}_i(t)]-[P^{(n+1)}_i(t)]), [p_i(t)]).
$$
By the construction we have that $\psi_{n+1}^{(0)}\circ\psi_{n,
n+1}^{(0)}=\psi_n^{(0)}.$ Thus, we obtain a homomorphism
$$
\psi^{(0)}: \varinjlim(\Kzero(M_{\alpha_n}), \psi_{n, n+1})\to
\Kzero(M_\alpha(A)).
$$

Let us show that $\psi^{(0)}$ is surjective. For each projection $p(t)\in M_\alpha$,
we can assume that $[p(0)]_0=k_i=[p_i]_0\in K_n$ for some $i\leq n$, and if denote by
$v$ the partial isometry with $v^*p_iv=p(0)$, then
$\norm{w_n^*vw_n-v}\leq \frac{1}{2}$. Then there is a path of partial isometries
$v(t)\in M_\alpha$ such that $v(0)=v$ and $v(1)=\alpha(v)$. Then
$$h:=[p(t)]-[v^*(t)P_i^{(n)}(t)v(t)]$$ is an element in $\Kzero(\mathrm{S}(A))$ and $[p(t)]_0=\psi^{(0)}(h, k_i)$. Therefore $\psi^{(0)}$ is surjective. The injectivity of $\psi^{(0)}$ follows from the injectivity of each $\psi^{(0)}_n$.  Thus, $\psi^{(0)}$ is an isomorphism.

Let us show that $\psi_{n, n+1}^{(0)}$ has the desired form.

Consider the invertible element
$c=(w^*_np_iw_n)u_{n+1}(w^*_np_iw_n)+(1-(w^*_np_iw_n))\in A$
(which is close to a unitary). Let us calculate the corresponding
element in $\mathrm{S}(A)$. Consider the path
$$
Z(t)=\left(
\begin{array}{cc}
w^*_np_iw_n&0\\
0&w^*_np_iw_n
\end{array}
\right)
R(u_{n+1}, t)
\left(
\begin{array}{cc}
w^*_np_iw_n&0\\
0&w^*_np_iw_n
\end{array}
\right)+
\left(
\begin{array}{cc}
1-w_n^*p_iw_n&0\\
0&1-w_n^*p_iw_n
\end{array}
\right).
$$
Note that $Z(0)=\mathrm{diag}(1, 1)$ and
$Z(1)=\mathrm{diag}(c, c^*)$ and $Z(t)$ is invertible for any $t$
with $\norm{Z^*(t)Z(t)-1}\leq\frac{1}{2^n}$. Let
$$
e(t)=(Z^*(t)Z(t))^{-\frac{1}{2}}Z^*(t)\left(\begin{array}{cc}1
&0\\ 0& 0\end{array}\right)Z(t)(Z^*(t)Z(t))^{-\frac{1}{2}}.
$$
Then
the element in $\Kzero(\mathrm{S}(A))$ which corresponds to $c$ is
$[e]-[\left(\begin{array}{cc}1 &0\\ 0& 0\end{array}\right)]$.
However, since
$$\norm{\left[\left(
\begin{array}{cc}
w^*_np_iw_n&0\\
0&w^*_np_iw_n
\end{array}
\right),
R(u_{n+1}, t)\right]}\leq2\norm{[w^*_np_iw_n, u_{n+1}]}\leq\frac{1}{2^n},
$$
a direct calculation shows
\begin{eqnarray*}
e(t)&\approx_{2/(2^n-1)} &\left(
\begin{array}{cc}
w^*_np_iw_n&0\\
0&w^*_np_iw_n
\end{array}
\right)
R^*(u_{n+1}, t)
\left(
\begin{array}{cc}
w^*_np_iw_n&0\\
0&0
\end{array}
\right)
R(u_{n+1}, t)
\left(
\begin{array}{cc}
w^*_np_iw_n&0\\
0&w^*_np_iw_n
\end{array}
\right)\\
&&+\left(
\begin{array}{cc}
1-w^*_np_iw_n&0\\
0&0
\end{array}
\right)\\
&\approx_{2/2^n}&
R^*(u_{n+1}, t)\left(
\begin{array}{cc}
w^*_np_iw_n&0\\
0&w^*_np_iw_n
\end{array}
\right)
\left(
\begin{array}{cc}
w^*_np_iw_n&0\\
0&0
\end{array}
\right)
\left(
\begin{array}{cc}
w^*_np_iw_n&0\\
0&w^*_np_iw_n
\end{array}
\right)
R(u_{n+1}, t)
\\
&&+\left(
\begin{array}{cc}
1-w^*_np_iw_n&0\\
0&0
\end{array}
\right)\\
&=&
R^*(u_{n+1}, t)
\left(
\begin{array}{cc}
w^*_np_iw_n&0\\
0&0
\end{array}
\right)
R(u_{n+1}, t)
+\left(
\begin{array}{cc}
1-w^*_np_iw_n&0\\
0&0
\end{array}
\right),
\end{eqnarray*}
and therefore, for $n>2$,
\begin{eqnarray*}
[e]-[\left(\begin{array}{cc}1 &0\\ 0& 0\end{array}\right)]
&=&[
R^*(u_{n+1}, t)
\left(
\begin{array}{cc}
w^*_np_iw_n&0\\
0&0
\end{array}
\right)
R(u_{n+1}, t)]
-[\left(
\begin{array}{cc}
w^*_np_iw_n&0\\
0&0
\end{array}
\right)]\\
&=&[R^*(u_{n+1}, t)w^*_np_iw_nR(u_{n+1}, t)]-[w^*_np_iw_n]\\
&=&[P^{(n+1)}_i(t)]-[P^{(n)}_i(t)].
\end{eqnarray*}
Therefore, the corresponding element of $[P^{(n+1)}_i(t)]-[P^{(n)}_i(t)]$ in $\Kone(A)$ is $$(w^*_np_iw_n)u_{n+1}(w^*_np_iw_n)+(1-(w^*_np_iw_n)),$$ and hence the map $\psi_{n, n+1}^{(0)}$ has the desired form.

Now, let us consider $\Kone(M_\alpha)$. For each $n$, consider the unitaries $\{z_i;\ 1\leq i\leq n\}$. Note that $$\norm{\alpha(z_i)-\alpha_n(z_i)}\leq\frac{1}{2^n}$$ for each $i$. Then,
there are paths of unitaries $s_i^{(n)}$ such that $s_i^{(n)}(0)=\alpha_n(v_i)$, $s_i^{(n)}(1)=\alpha(v_i)$, and $$\norm{s_i^{(n)}(t)-\alpha(v_i)}\leq\frac{1}{2^{n-1}}$$ for each $t\in[0,1]$. Define a homomorphism $\psi_n^{(1)}: \Kzero(A)\bigoplus H_n\to\Kone(M_\alpha(A))$ by $$\psi_n^{(1)}: ([S(t)], z_i)\mapsto [S(t)]+[V_i^{(n)}(t)]$$ where $V_i^{(n)}$ is the path
\begin{displaymath}
V^{(n)}_i(t)=\left\{
\begin{array}{ll}
R^*(w_n, 2t)z_iR(w_n, 2t)& 0\leq t\leq \frac{1}{2}\\
s^{(n)}_i(2t-1)& \frac{1}{2}\leq t\leq 1,
\end{array}
\right.
\end{displaymath}
and $S(t)$ is a unitary in
$\mathrm{U}_\infty(\widetilde{\mathrm{S}(A)})$.

We then have
\begin{eqnarray*}
\psi_{n+1}([S(t)],[v_i])-\psi_{n}([S(t)],[v_i])&=&[V^{(n+1)}_i(t)]-[V^{(n)}_i(t)]\\
&=&[\p(R^*(u_{n+1}, t), w^*_nv_iw_n)]\in\Kone(\mathrm{S}(A)).
\end{eqnarray*}
Define homomorphism $\psi_{n, n+1}^{(1)}:\Kzero(A)\oplus H_n\to\Kzero(A)\oplus H_{n+1}$ by $$\psi_{n, n+1}^{(0)}: (S(t), [v_i])\mapsto ([S(t)]+([V^{(n)}_i(t)]-[V^{(n+1)}_i(t)]), [v_i]).$$ By the construction we have that $\psi_{n+1}^{(1)}\circ\psi_{n, n+1}^{(1)}=\psi_n^{(1)}.$ Thus, there is a homomorphism $$\psi^{(1)}: \varinjlim(\Kzero(M_{\alpha_n}), \psi_{n, n+1})\to\Kzero(M_\alpha).$$ By the same argument as that for $\Kzero(M)_\alpha$, the map $\psi^{(1)}$ is an isomorphism, and moreover, it has the desired form.
\end{proof}

\begin{NN}\label{psp}

Let
\begin{displaymath}
\xymatrix{
0\ar[r]&G\ar[r]&E\ar[r]^\pi&H\ar[r]&0}
\end{displaymath}
be a short exact sequence of abelian groups with $H$ countable.  We assume that the extension is pure, i.e.,
for any finitely generated subgroup $H'\subseteq H$, there is a homomorphism $\theta': H'\to E$ such that $\pi\circ \theta'=\text{id}_{H'}.$

Write $H=\{h_1, h_2,  ... \}$. Consider $H_n=<h_1,  ... , h_n>$,
the subgroup generated by $\{h_1,  ... , h_n\}$. Since $E$ is a
pure extension, there is a map $\theta_n: H_n\to E$ such that
$\pi\circ\theta_n=\mathrm{id}_{H_n}$. Let us call this map
$\theta_n$ a partial splitting map. Denote by $\iota_n$ the
inclusion map from $H_n$ to $H_{n+1}$, and set the map $\gamma_n:
H_n\to E$ by
$$\gamma_n=\theta_{n+1}\circ\iota_n-\theta_n.$$ It is clear that
$\gamma_n(H_n)\subseteq G\subseteq E$. Therefore, let us regard
$\gamma_n$ as a map from $H_n$ to $G$.

\end{NN}

\begin{defn}\label{tadivisible}
A partially ordered group $G$ with an order unit is
tracially approximately divisible if for any $a\in G$, any
$\ep>0$, and any natural number $n$, there exist $b\in G$ such
that $\abs{\tau(a-nb)}\leq\ep$ for any state $\tau$ of $G$.
\end{defn}

\begin{lem}\label{smalltrace}
With the setting as \ref{psp}, if, in addition, $G$ is a partially ordered group
with an order unit which is
tracially approximately divisible, then, for any $\ep>0$
and any given partial splitting map $\theta_n$ with $\pi\circ
\theta_n=\mathrm{id}_{H_n},$ one can choose a partial splitting map
$\theta_{n+1}$ such that $\abs{\tau(\gamma_n(h_i))}<\ep$ for any
$1\leq i\leq n$ and any state $\tau$ of $G$.
\end{lem}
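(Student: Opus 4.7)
The strategy is to start from any partial splitting $\tilde\theta_{n+1}\colon H_{n+1}\to E$ (which exists because the extension is pure and $H_{n+1}$ is finitely generated), measure its deviation $\tilde\gamma:=\tilde\theta_{n+1}|_{H_n}-\theta_n\in\mathrm{Hom}(H_n,G)$ from the prescribed $\theta_n$, and then correct it by adding a suitable homomorphism $\sigma\colon H_{n+1}\to G$. Since $\pi\circ\sigma=0$, the map $\theta_{n+1}:=\tilde\theta_{n+1}+\sigma$ is automatically a partial splitting, and the task reduces to constructing $\sigma$ so that $\sigma|_{H_n}$ tracially approximates $-\tilde\gamma$ on $h_1,\dots,h_n$, giving $\gamma_n=\tilde\gamma+\sigma|_{H_n}$ with small traces.

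The first key observation is that every state on a partially ordered group with order unit is $\mathbb{R}$-valued and vanishes on torsion: for any homomorphism $\nu\colon H_n\to G$ and any torsion $s\in H_n$, one has $\tau(\nu(s))=0$. So only the free quotients matter. I would apply the Smith normal form to the induced inclusion $H_n/T_n\hookrightarrow H_{n+1}/T_{n+1}$ of free abelian groups of ranks $p\le r$ (where $T_n,T_{n+1}$ are the torsion subgroups, and note $T_n=H_n\cap T_{n+1}$), and lift the resulting bases to elements $\tilde e_1,\dots,\tilde e_p\in H_n$ and $\tilde f_1,\dots,\tilde f_r\in H_{n+1}$, together with positive integers $k_1\mid\cdots\mid k_p$, such that $\tilde e_l-k_l\tilde f_l\in T_{n+1}$ for each $l\le p$ and $H_{n+1}=\bigoplus_{l=1}^{r}\mathbb{Z}\tilde f_l\oplus T_{n+1}$. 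Write each $h_i=\sum_l b_{il}\tilde e_l+s_i$ with $s_i\in T_n$, and set $M:=1+\max_{1\le i\le n}\sum_{l=1}^{p}|b_{il}|$.

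Applying tracial approximate divisibility of $G$ one coordinate at a time, for each $l\le p$ I would pick $x_l\in G$ with $|\tau(k_lx_l+\tilde\gamma(\tilde e_l))|<\ep/M$ for every state $\tau$. Define $\sigma\colon H_{n+1}\to G$ by $\sigma(\tilde f_l)=x_l$ for $l\le p$, $\sigma(\tilde f_l)=0$ for $l>p$, and $\sigma|_{T_{n+1}}=0$. Then $\sigma(\tilde e_l)=k_l\sigma(\tilde f_l)+\sigma(\tilde e_l-k_l\tilde f_l)=k_lx_l$, and since $T_n\subseteq T_{n+1}$ we have $\sigma(s_i)=0$. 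A direct computation, using also $\tau(\tilde\gamma(s_i))=0$, gives
\[
\tau(\gamma_n(h_i))=\sum_{l=1}^{p}b_{il}\bigl(k_l\tau(x_l)+\tau(\tilde\gamma(\tilde e_l))\bigr)=\sum_{l=1}^{p}b_{il}\,\tau(k_lx_l+\tilde\gamma(\tilde e_l)),
\]
whose absolute value is strictly less than $M\cdot(\ep/M)=\ep$, as required.

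The main technical point is to arrange the decompositions $H_n=F_n\oplus T_n$ and $H_{n+1}=F_{n+1}\oplus T_{n+1}$ compatibly with the Smith normal form so that the relations $\tilde e_l\equiv k_l\tilde f_l\pmod{T_{n+1}}$ and the free complement $H_{n+1}=\bigoplus_l\mathbb{Z}\tilde f_l\oplus T_{n+1}$ can be chosen simultaneously. Once this is set up, the tracial approximate divisibility is invoked $p$ times and the rest is bookkeeping.
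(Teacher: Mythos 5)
Your proof is correct, and its overall skeleton is the one the paper uses: start from an arbitrary partial splitting of $H_{n+1}$ (purity), measure its deviation $\tilde\gamma$ from the prescribed $\theta_n$, and repair it by subtracting/adding a homomorphism $H_{n+1}\to G$ built from tracial approximate divisibility, noting that states kill torsion. Where you genuinely diverge is in how the correcting homomorphism is produced. The paper tensors with $\Ratn$: it extends $\tilde\gamma\otimes\mathrm{id}$ to a $\Ratn$-linear map $\psi$ on $H_{n+1}\otimes\Ratn$, writes $\psi(e_i)$ as a rational combination of elements of $G$, and then invokes tracial approximate divisibility to replace each such rational combination by an honest element $p_i\in G$ with small tracial error; the correction is $e_i\mapsto p_i$, torsion $\mapsto 0$. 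You instead stay integral: you apply the stacked-bases/Smith normal form theorem to the inclusion of free quotients $H_n/T_n\hookrightarrow H_{n+1}/T_{n+1}$, obtaining relations $\tilde e_l\equiv k_l\tilde f_l \pmod{T_{n+1}}$, and then apply divisibility directly with the explicit divisors $k_l$ to get $x_l$ with $|\tau(k_lx_l+\tilde\gamma(\tilde e_l))|$ small. What your route buys is that the use of tracial approximate divisibility is completely explicit (the divisor is $k_l$), whereas the paper's step "approximate $\sum_j r_j^{(i)}g_j^{(i)}$ by $p_i\in G$" silently uses divisibility on the denominators of the $r_j^{(i)}$; what the paper's route buys is that it only needs the decomposition of $H_{n+1}$ itself, not the structure theory of the pair $(H_n\subseteq H_{n+1})$. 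One small remark: the compatibility you flag as the "main technical point" at the end is automatic — any lifts $\tilde f_l\in H_{n+1}$ of a basis of $H_{n+1}/T_{n+1}$ generate a free complement of $T_{n+1}$, since the subgroup they generate surjects onto the free quotient and meets $T_{n+1}$ trivially — so there is no gap there, only an unnecessary worry.
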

\begin{proof}
Pick any partial splitting map $\theta_{n+1}': H_{n+1}\to E_1$, and consider the map $\gamma'_n: H_n\to G$ defined by $$\gamma'_{n}=\theta'_{n+1}\circ\iota_n-\theta_n.$$ Consider the $\Ratn$-linear map
$$
\gamma_n'\otimes\mathrm{id}: H_{n}\bigotimes\Ratn\to G \bigotimes\Ratn.
$$
Then it has an extenstion to $H_{n+1}\bigotimes\Ratn$. That is, there is an linear map $\psi: H_{n+1}\bigotimes\Ratn\to G \bigotimes\Ratn$ such that
\begin{displaymath}
\xymatrix{
G \bigotimes\Ratn&H_{n+1}\bigotimes\Ratn \ar[l]_{\psi}\\
&H_n\bigotimes\Ratn\ar[ul]^{\gamma'_n\otimes\mathrm{id}}\ar[u]_{\iota_n\otimes\mathrm{id}}
}
\end{displaymath}
commutes.

Since $H_{n+1}$ is finitely generated, $H_{n+1}\cong\Int^{k_{n+1}}\bigoplus T_{n+1}$ for some finite abelian group $T_{n+1}$. Denote by $\{e_1, e_2,  ... , e_{k_{n+1}}\}$ the standard generators for the torsion free part of $H_{n+1}$. Then, for each $1\leq i\leq k_{n+1}$, we have $$\psi(e_i)=\sum_{j=1}^{l_i}r_j^{(i)}g_j^{(i)}$$ for some $r_j^{(i)}\in\Ratn$ and $g_j^{(i)}\in G$.

Write $(\iota_n(h_s))_{\mathrm{free}}=\sum_{i}m_i^{(s)}e_i$, and denote by $m=\max\{m_i^{(s)}; 1\leq s\leq n, 1\leq i\leq k_{n+1}\}$. Since $G$ is tracially approximately divisible, for each $e_i$, one can find $p_i\in G$ such that $$\tau(\psi(e_i)-p_i)<\frac{\ep}{k_{n+1}m}$$ for any state $\tau$ of $G$.

Define the map $\phi: H_{n+1}\to\Kzero(A)\subset E_1$ by sending $e_i$ to $p_i$ and $T_{n+1}$ to $\{0\}$, and let us consider the map $$\theta_{n+1}:=\theta'_{n+1}-\phi.$$
Then, the map $\theta_{n+1}$ satisfies the lemma. Indeed, it is clear that $\pi\circ\theta_{n+1}=\mathrm{id}_{H_{n+1}}$. Moreover, for any $h_s$ and any $\tau$, we have
\begin{eqnarray*}
\abs{\tau(\theta_{n+1}\circ\iota_n(h_s)-\theta_n(h_s))}&=&\abs{\tau(\theta'_{n+1}\circ\iota_n(h_s)-\phi\circ\iota_n(h_s)-\theta_n(h_s))}\\
&=& \abs{\tau(\gamma_n'(h_s)-\phi\circ\iota_n(h_s))}\\
&=&\abs{\tau(\psi\circ\iota_n(h_s)-\phi\circ\iota_n(h_s))}\\
&=&\abs{\sum_{i}m_i^{(s)}\tau(\psi(e_i)-p_i)}\\
&<&\ep,
\end{eqnarray*}
as desired.
\end{proof}

\begin{thm}\label{kk-lifting}
Let $A$ be a unital simple \CA\, and  let $B\subseteq A$ be a unital
separable C*-subalgebra. Suppose that $A$ contains a positive
element $b$ with $\mathrm{sp}(b)=[0, 1]$, and $A$ has Property
$\mathrm{(B1)}$ and Property $\mathrm{(B2)}$ associated with $B$ and
$\Delta_B$ and $K_0(A)$ is tracially approximately divisible.  For
any $E_0\in\mathrm{Pext(\Kone(A), \Kzero(B))}$ and
$E_1\in\mathrm{Pext(\Kzero(A), \Kone(B))}$, there exists
$\alpha\in\overline{\mathrm{Inn}}(B,A)$ such that
$\eta_0(M_\alpha)=E_0$ and $\eta_1(M_\alpha)=E_1$.
\end{thm}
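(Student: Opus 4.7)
The plan is to construct $\alpha$ as a limit of inner automorphisms of $A$ restricted to $B$, in the exact form $\alpha = \lim_n \mathrm{Ad}(w_n)$ with $w_n = u_1 u_2 \cdots u_n$ treated by Lemma \ref{decomp}, and to choose each successive unitary $u_{n+1}$ so that the connecting maps $\gamma_n^0 \colon K_n \to K_1(A)$ and $\gamma_n^1 \colon H_n \to K_0(A)$ that the lemma uses to present $\eta_0(M_\alpha)$ and $\eta_1(M_\alpha)$ coincide, via the inductive-limit mechanism of \ref{psp}, with prescribed partial splittings of the given pure extensions $E_0$ and $E_1$. To set up the target, I enumerate $K_0(B)_+ = \{k_1, k_2, \ldots\}$ and $K_1(B) = \{h_1, h_2, \ldots\}$, fix matrix representatives $p_i, z_i$ in $\mathrm{M}_{r_i}(\mathcal F_i)$ for an increasing exhausting sequence of finite subsets $\mathcal F_i \subset B$, let $K_n, H_n$ denote the subgroups generated by the first $n$ classes, and use purity of $E_0, E_1$ to produce partial splitting maps $\theta_n^{(0)} \colon K_n \to E_0$ and $\theta_n^{(1)} \colon H_n \to E_1$, with connecting differences $\gamma_n^0, \gamma_n^1$ as in \ref{psp}.

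Since $K_0(A)$ is tracially approximately divisible, Lemma \ref{smalltrace} permits me to inductively replace each $\theta_{n+1}^{(1)}$ by another partial splitting so that $|\tau(\gamma_n^1(h_i))|$ is as small as desired, uniformly in $\tau \in \mathrm{T}(A)$ and $1 \le i \le n$; I bring this below the tolerance $\Delta_B$ that Property (B2) will require at the next stage. The maps $\gamma_n^0$ take values in $K_1(A)$ and carry no tracial constraint. Given $u_1, \ldots, u_n$ already built, I set $w_n = u_1 \cdots u_n$ and consider the monomorphism $\iota_n := \mathrm{Ad}(w_n)|_B \colon B \to A$; then I apply Property (B2) to $\iota_n$ with tolerance $\varepsilon_n = \delta^{\mathfrak p}/2^{n+1}$, finite subset $\mathcal F_n$ enlarged to contain all matrix entries of the $p_i, z_i$ used so far, and generating subsets $\mathcal P_0 \subset K_n$, $\mathcal P_1 \subset H_n$, taking $b_0 = \gamma_n^0$ and $b_1 = \gamma_n^1$ extended to any larger finitely generated subgroups $G_0, G_1$ that (B2) may demand. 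The trace refinement of the previous step ensures the hypothesis (\ref{Pbott-2}) on $b_1$, and Property (B2) returns a unitary $u_{n+1} \in \mathrm{U}(A)$ with small commutator $\|[\iota_n(c), u_{n+1}]\| < \varepsilon_n$ on $\mathcal F_n$ and with $\mathrm{bott}_0(\iota_n, u_{n+1})|_{\mathcal P_0} = \gamma_n^0|_{\mathcal P_0}$ and $\mathrm{bott}_1(\iota_n, u_{n+1})|_{\mathcal P_1} = \gamma_n^1|_{\mathcal P_1}$.

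The conclusion then follows directly from Lemma \ref{decomp}: the commutator bounds force convergence of $\alpha = \lim_n \mathrm{Ad}(w_n) \in \overline{\mathrm{Inn}}(B, A)$, and the explicit formulas for the connecting maps in that lemma, namely $k_i \mapsto [(w_n^* p_i w_n) u_{n+1} (w_n^* p_i w_n) + (1 - w_n^* p_i w_n)]$ and $h_i \mapsto [\mathtt{p}(R^*(u_{n+1}, t), w_n^* z_i w_n)]$, are precisely $\mathrm{bott}_0(\iota_n, u_{n+1})$ and, via the trace-preserving injection $\Lambda$ of \ref{map-gamma} together with Lemmas \ref{well-defn}, \ref{inj}, and \ref{trace-preserve}, $\mathrm{bott}_1(\iota_n, u_{n+1})$; hence the inductive-limit descriptions of $\eta_0(M_\alpha)$ and $\eta_1(M_\alpha)$ agree with those of $E_0$ and $E_1$. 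I expect the principal obstacle to be exactly this last identification, which requires unwinding the $\mathtt{p}$-loop formula, handling the $R(u,\cdot)$ convention, and reconciling the enlargement from $K_n, H_n$ to the groups $G_0, G_1$ produced by (B2) with the nested partial splittings. Remark \ref{B2sym}, which pins the dependence of $\Delta_B$ and of $G_0, G_1$ on the conjugacy class of $\iota_n$ only, is what makes the inductive choice of tolerances consistent across the countably many stages.
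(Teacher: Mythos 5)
Your proposal is correct and follows essentially the same route as the paper's proof: partial splittings of the pure extensions, Lemma \ref{smalltrace} to make the successive differences tracially small, Property (B2) to realize them as Bott data of the unitaries $u_{n+1}$, and Lemma \ref{decomp} together with the injectivity of $\Lambda$ to identify the resulting connecting maps with the prescribed ones. The one refinement in the paper's argument, addressing exactly the obstacle you flagged, is that the trace bound is taken below the constant $\delta$ of Lemma \ref{commutator} so that $\gamma^1_n$ takes values in $C(A)$, and it is $b_1=\Lambda\circ\gamma^1_n$ (rather than $\gamma^1_n$ itself) that is fed into Property (B2), which is what lets the equality $\tilde{\gamma}^1_n=\gamma^1_n$ follow from injectivity of $\Lambda$ without knowing that $\Lambda$ is the canonical identification.
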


\begin{proof}
Write $$\Kzero(B)_+=\{k_1, k_2,  ... , k_n, ... \}$$ and $$\Kone(B)=\{h_1, h_2,  ... , h_n,  ... \},$$ and consider the subgroups $K_n:=<k_1, ... ,k_n>$ and $H_n:=<h_1, ... ,h_n>$. Let $\{\F_i\}$ be an increasing family of finite subsets in the unit ball of $B$ with the union dense in the unit ball of $B$. Assume that for each $i$, there is a projection $p_i\in\mathrm{M}_{r_i}(\F_i)$ and unitary $z_i\in\mathrm{M}_{r_i}(\F_i)$ with $[p_i]_0=k_i$ and $[z_i]_1=h_i$.
We may assume that $r_i\le r_{i+1},$ $i\in\mathbb N.$

We assert that there are unitaries $\{u_n\}$ and diagrams

\vskip 3mm
\begin{xy}
(0,0)*{~}="0";
(20,45)*{0}="1uu";
(40,45)*{K_1(A)}="2uu";
(78,45)*{E_0}="4uu";
(110,45)*{K_0(B)}="6uu";
(130, 45)*{0}="7uu";
(20,15)*{0}="1";
(40,15)*{K_1(A)}="2";
(70,15)*{K_1(A)}="3";
(78,15)*{\oplus}="4";
(86,15)*{K_{n+1}}="5";
(110,15)*{K_{n+1}}="6";
(130, 15)*{0}="7";
(20,0)*{0}="1l";
(40,0)*{K_1(A)}="2l";
(70,0)*{K_1(A)}="3l";
(78,0)*{\oplus}="4l";
(86,0)*{K_{n}}="5l";
(110,0)*{K_{n}}="6l";
(130, 0)*{0}="7l";
(40,-15)*{\vdots}="2ll";
(70,-15)*{\vdots}="3ll";
(86,-15)*{\vdots}="5ll";
(110,-15)*{\vdots}="6ll";
(40,30)*{\vdots}="2u";
(70,30)*{\vdots}="3u";
(78,30)*{~}="4u";
(86,30)*{\vdots}="5u";
(110,30)*{\vdots}="6u";
{\ar "1";"2"};
{\ar "2";"3"};
{\ar^{\pi_n}@<.5ex> "5";"6"};
{\ar^{\theta_{n+1}^0}@<.5ex> "6";"5"};
{\ar "6";"7"};
{\ar "1uu";"2uu"};
{\ar "2uu";"4uu"};
{\ar^\pi "4uu";"6uu"};
{\ar "6uu";"7uu"};
{\ar "1l";"2l"};
{\ar "2l";"3l"};
{\ar^{\pi}@<.5ex> "5l";"6l"};
{\ar^{\theta_n^0}@<.5ex> "6l"; "5l"};
{\ar "6l";"7l"};
{\ar@{=} "2l";"2"};
{\ar@{=} "3l";"3"};
{\ar_{\iota_{n, n+1}} "5l";"5"};
{\ar_{\iota_{n, n+1}} "6l";"6"};
{\ar_{\gamma^{0}_n} "5l";"3"};
{\ar@{=} "2u";"2uu"};
{\ar "4u";"4uu"};
{\ar "6u";"6uu"};
{\ar@{=} "2ll";"2l"};
{\ar@{=} "3ll";"3l"};
{\ar "5ll";"5l"};
{\ar "6ll";"6l"};
{\ar "5ll"; "3l"};
{\ar@{=} "2";"2u"};
{\ar@{=} "3";"3u"};
{\ar "5";"5u"};
{\ar "6";"6u"};
{\ar "5"; "3u"};
\end{xy}
\vskip 3mm
\noindent and

\vskip 3mm
\begin{xy}
(0,0)*{~}="0";
(20,45)*{0}="1uu";
(40,45)*{K_0(A)}="2uu";
(78,45)*{E_1}="4uu";
(110,45)*{K_0(B)}="6uu";
(130, 45)*{0}="7uu";
(20,15)*{0}="1";
(40,15)*{K_0(A)}="2";
(70,15)*{K_0(A)}="3";
(78,15)*{\oplus}="4";
(86,15)*{H_{n+1}}="5";
(110,15)*{H_{n+1}}="6";
(130, 15)*{0}="7";
(20,0)*{0}="1l";
(40,0)*{K_0(A)}="2l";
(70,0)*{K_0(A)}="3l";
(78,0)*{\oplus}="4l";
(86,0)*{H_{n}}="5l";
(110,0)*{H_{n}}="6l";
(130, 0)*{0}="7l";
(40,-15)*{\vdots}="2ll";
(70,-15)*{\vdots}="3ll";
(86,-15)*{\vdots}="5ll";
(110,-15)*{\vdots}="6ll";
(40,30)*{\vdots}="2u";
(70,30)*{\vdots}="3u";
(78,30)*{~}="4u";
(86,30)*{\vdots}="5u";
(110,30)*{\vdots}="6u";
{\ar "1";"2"};
{\ar "2";"3"};
{\ar^{\pi_n}@<.5ex> "5";"6"};
{\ar^{\theta_{n+1}^1}@<.5ex> "6";"5"};
{\ar "6";"7"};
{\ar "1uu";"2uu"};
{\ar "2uu";"4uu"};
{\ar^\pi "4uu";"6uu"};
{\ar "6uu";"7uu"};
{\ar "1l";"2l"};
{\ar "2l";"3l"};
{\ar^\pi@<.5ex> "5l";"6l"};
{\ar^{\theta_{n}^1}@<.5ex> "6l";"5l"};
{\ar "6l";"7l"};
{\ar@{=} "2l";"2"};
{\ar@{=} "3l";"3"};
{\ar_{\iota_{n, n+1}} "5l";"5"};
{\ar_{\iota_{n, n+1}} "6l";"6"};
{\ar_{\gamma^{1}_n} "5l";"3"};
{\ar@{=} "2u";"2uu"};
{\ar "4u";"4uu"};
{\ar "6u";"6uu"};
{\ar@{=} "2ll";"2l"};
{\ar@{=} "3ll";"3l"};
{\ar "5ll";"5l"};
{\ar "6ll";"6l"};
{\ar "5ll"; "3l"};
{\ar@{=} "2";"2u"};
{\ar@{=} "3";"3u"};
{\ar "5";"5u"};
{\ar "6";"6u"};
{\ar "5"; "3u"};
\end{xy}
\vskip 3mm

\noindent such that
$$
\norm{[u_{n+1},\, a]}\leq\frac{\delta^{\mathfrak{p}}}{r_n^2\cdot 2^{n+1}}
$$
for any $a\in\mathrm{M}_{r_n}(w_n^*\F_nw_n)$, where $w_n=u_1\cdots u_n$ and $u_1=1$.
%
%
The image of each $\gamma^1_n$ lies inside $C(A)$ so that $\Lambda\circ\gamma^1_n$ is well-defined, and $$\mathrm{bott}_1(w^*_nz_iw_n, u_{n+1})=\Lambda\circ\gamma^1_n(h_i)\quad\textrm{and}\quad\mathrm{bott}_0(w^*_np_iw_n, u_{n+1})=\gamma^0_n(k_i).$$

Moreover, each partial splitting map $\theta_n^{i}$ ($i=0, 1$)  can be extended to a partial splitting map $\tilde{\theta}_n^i$ ($i=0, 1$) defined on the subgroup generated by $K_n\cup \mathcal G_0^n$ or $H_n\cup \mathcal G_1^n$, where $\mathcal G_i$ ($i=0, 1$) is the set of generators of $G_i$ ($ i=0, 1$) of Definition \ref{Pbott} with respect to ${\delta^{\mathfrak{p}}\over{r_{n+1}^2\cdot 2^{n+1}}}, {\cal F}_n, {\cal P}_0^{(n)}, {\cal P}_1^{(n)}$, and $\imath$, where ${\cal P}_0=\{[p_1], [p_2],...,[p_n]\}$ and ${\cal P}_1=\{[z_1], [z_2],...,[z_n]\}$. Denote by the subgroups generated by $K_n\cup\mathcal G_0$ and $H_n\cup\mathcal G_1$ by $\widetilde{K}_n$ and $\widetilde{H}_n$ respectively.

Assume that we have constructed the unitaries $\{u_1=1, u_2,  ... , u_{n}\}$ and the diagrams
\vskip 3mm
\begin{xy}
(0,0)*{~}="0";
(20,30)*{0}="1uu";
(40,30)*{K_1(A)}="2uu";
(78,30)*{E_0}="4uu";
(110,30)*{K_1(B)}="6uu";
(130, 30)*{0}="7uu";
(20,15)*{0}="1";
(40,15)*{K_1(A)}="2";
(70,15)*{K_1(A)}="3";
(78,15)*{\oplus}="4";
(86,15)*{K_{n}}="5";
(110,15)*{K_{n}}="6";
(130, 15)*{0}="7";
(20,0)*{0}="1l";
(40,0)*{K_1(A)}="2l";
(70,0)*{K_1(A)}="3l";
(78,0)*{\oplus}="4l";
(86,0)*{K_{n-1}}="5l";
(110,0)*{K_{n-1}}="6l";
(130, 0)*{0}="7l";
(40,-15)*{\vdots}="2ll";
(70,-15)*{\vdots}="3ll";
(86,-15)*{\vdots}="5ll";
(110,-15)*{\vdots}="6ll";
{\ar "1";"2"};
{\ar "2";"3"};
{\ar^{\pi_n}@<.5ex> "5";"6"};
{\ar^{\theta_{n}^0}@<.5ex> "6";"5"};
{\ar "6";"7"};
{\ar "1uu";"2uu"};
{\ar "2uu";"4uu"};
{\ar^\pi "4uu";"6uu"};
{\ar "6uu";"7uu"};
{\ar "1l";"2l"};
{\ar "2l";"3l"};
{\ar^\pi@<.5ex> "5l";"6l"};
{\ar^{\theta_{n-1}^0}@<.5ex> "6l";"5l"};
{\ar "6l";"7l"};
{\ar@{=} "2l";"2"};
{\ar@{=} "3l";"3"};
{\ar_{\iota_{n-1, n}} "5l";"5"};
{\ar_{\iota_{n-1, n}} "6l";"6"};
{\ar_{\gamma^{0}_{n-1}} "5l";"3"};
{\ar@{=} "2";"2uu"};
{\ar "4";"4uu"};
{\ar "6";"6uu"};
{\ar@{=} "2ll";"2l"};
{\ar@{=} "3ll";"3l"};
{\ar "5ll";"5l"};
{\ar "6ll";"6l"};
{\ar "5ll"; "3l"};
\end{xy}
\vskip 3mm

\noindent and

\vskip 3mm
\begin{xy}
(0,0)*{~}="0";
(20,30)*{0}="1uu";
(40,30)*{K_0(A)}="2uu";
(78,30)*{E_1}="4uu";
(110,30)*{K_0(B)}="6uu";
(130, 30)*{0}="7uu";
(20,15)*{0}="1";
(40,15)*{K_0(A)}="2";
(70,15)*{K_0(A)}="3";
(78,15)*{\oplus}="4";
(86,15)*{H_{n}}="5";
(110,15)*{H_{n}}="6";
(130, 15)*{0}="7";
(20,0)*{0}="1l";
(40,0)*{K_0(A)}="2l";
(70,0)*{K_0(A)}="3l";
(78,0)*{\oplus}="4l";
(86,0)*{H_{n-1}}="5l";
(110,0)*{H_{n-1}}="6l";
(130, 0)*{0}="7l";
(40,-15)*{\vdots}="2ll";
(70,-15)*{\vdots}="3ll";
(86,-15)*{\vdots}="5ll";
(110,-15)*{\vdots}="6ll";
{\ar "1";"2"};
{\ar "2";"3"};
{\ar^{\pi_{n-1}}@<.5ex> "5";"6"};
{\ar^{\theta_{n}^1}@<.5ex> "6";"5"};
{\ar "6";"7"};
{\ar "1uu";"2uu"};
{\ar "2uu";"4uu"};
{\ar^\pi "4uu";"6uu"};
{\ar "6uu";"7uu"};
{\ar "1l";"2l"};
{\ar "2l";"3l"};
{\ar^\pi@<.5ex> "5l";"6l"};
{\ar^{\theta_{n-1}^1}@<.5ex> "6l";"5l"};
{\ar "6l";"7l"};
{\ar@{=} "2l";"2"};
{\ar@{=} "3l";"3"};
{\ar_{\iota_{n-1, n}} "5l";"5"};
{\ar_{\iota_{n-1, n}} "6l";"6"};
{\ar_{\gamma^{1}_{n-1}} "5l";"3"};
{\ar@{=} "2";"2uu"};
{\ar "4";"4uu"};
{\ar "6";"6uu"};
{\ar@{=} "2ll";"2l"};
{\ar@{=} "3ll";"3l"};
{\ar "5ll";"5l"};
{\ar "6ll";"6l"};
{\ar "5ll"; "3l"};
\end{xy}
\vskip 3mm

\noindent satisfying the above assertion.

Denote by
$$
\delta_n=\Delta_B({\delta^{\mathfrak{p}}\over{r_{n+1}^2\cdot 2^{n+1}}}, {\cal F}_n, {\cal P}_0^{(n)}, {\cal P}_1^{(n)}, \textrm{ad}(w_n)\circ \imath),
$$
where ${\cal P}_0=\{[p_1], [p_2],...,[p_n]\}$ and ${\cal P}_1=\{[z_1], [z_2],...,[z_n]\}$. 
Set
$$
w_n^{(r_n)}=\text{diag}(\overbrace{w_n, w_n,...,w_n}^{r_n}),\,\,\,n=1,2, ...
$$
We note that $[p_i]=[(w_n^{(r_n)})^*p_iw_n^{(r_n)}]$ and $[z_i]=[(w_n^{(r_n)})^*z_iw_n^{(r_n)}],$ $i=1,2, ... $


Since $E_0$ and $E_1$ are pure extensions, there are partial splitting maps $\tilde{\theta}_{n+1}^0:\widetilde{K}_{n+1}\to E_0$ and $\tilde{\theta}_{n+1}^1: \widetilde{H}_{n+1}\to E_1$. Since
$\Kzero(A)$ is tracially approximately divisible, by
Lemma \ref{smalltrace}, the partial splitting map $\tilde{\theta}^1_{n+1}$
can be chosen so that for any $g\in\mathcal G_i^{n}\cup\{g_1,...,g_n\}$,
$$
\abs{\tau({\gamma}^1_{n}(g))}\leq\min\{\delta, \delta_n\},\quad \tforal \tau\in \mathrm{T}(A),
$$
where $\delta$ is the constant of Lemma \ref{commutator} (since $A$ has Property (B1)). Note that the maps $\gamma^0_{n}$ and $\gamma^1_{n}$ are defined on $\widetilde{K}_n$ and $\widetilde{H}_n$ respectively, in particular, on $G_0$ and $G_1$ respectively.

By Lemma \ref{commutator}, one has $$\gamma^1_{n}(h_i)=[U^*(t)zU(t)z^*]_1\in\Kone(\mathrm{S}(A))$$ for a unitary $z\in A$ and a path $U(t)\in\mathrm{C}([0, 1], \mathrm{U}_\infty(A))$ with $U_0=1$ and $\norm{[U(1), z]}\leq\delta^{\mathfrak{p}}$. Therefore, the map $$\Lambda\circ\gamma^1_n|_{H_n}: H_n\to\Kzero(A)$$ is well-defined, and by Lemma \ref{trace-preserve}, $$\abs{\tau(\Lambda\circ\gamma^1_n(h_i))}=\abs{\tau(\gamma^1_n(h_i))}\leq\delta_n$$ for any $\tau\in\mathrm{T}(A)$ and $1\leq i\leq n$.

Put $b_0= \gamma_n^{0}$ and $b_1=\Lambda\circ \gamma_n^1.$ By the assumption that $A$ has Property (B2) associated with $B$ and $\Delta_B$,  there is a unitary $u_{n+1}\in A$ such that
$$
\norm{[u_{n+1}, a]}\leq\frac{1}{r_n^2\cdot 2^{n+1}}
$$
for any $a\in\mathrm{M}_{r_n}(w^*_n\mathcal F_nw_n)$ and $$\mathrm{bott}_1(w_n^*z_iw_n, u_{n+1})=\Lambda\circ\gamma^1_{n}(h_i)\quad\textrm{and}\quad\mathrm{bott}_0(w_n^*p_iw_n, u_{n+1})=\gamma^0_{n}(k_i).$$ Denote by $\theta_{n+1}^i$ ($i=0, 1$) the restriction of $\tilde{\theta}_{n+1}^i$ ($i=0, 1$) to $K_{n+1}$ and $H_{n+1}$ respectively. Repeating this procedure, we get a sequence of unitaries $\{u_n\}$ and diagrams satisfying the assertion.

By Lemma \ref{decomp}, the inner automorphisms
$\{\mathrm{Ad}(u_1\cdots u_n)\}$ converge to a monomorphism $\alpha$,
and the extension $\eta_0(M_\alpha)$ and $\eta_1(M_\alpha)$ are
determined by the inductive limits of
\vskip 3mm
\xy
(0,0)*{~}="0";
(20,15)*{0}="1";
(40,15)*{K_1(A)}="2";
(70,15)*{K_1(A)}="3";
(78,15)*{\oplus}="4";
(86,15)*{K_{n+1}}="5";
(110,15)*{K_{n+1}}="6";
(130, 15)*{0}="7";
(20,0)*{0}="1l";
(40,0)*{K_1(A)}="2l";
(70,0)*{K_1(A)}="3l";
(78,0)*{\oplus}="4l";
(86,0)*{K_{n}}="5l";
(110,0)*{K_{n}}="6l";
(130, 0)*{0}="7l";
{\ar "1";"2"};
{\ar "2";"3"};
{\ar "5";"6"};
{\ar "6";"7"};
{\ar "1l";"2l"};
{\ar "2l";"3l"};
{\ar "5l";"6l"};
{\ar "6l";"7l"};
{\ar@{=} "2l";"2"};
{\ar@{=} "3l";"3"};
{\ar_{\iota_{n, n+1}} "5l";"5"};
{\ar_{\iota_{n, n+1}} "6l";"6"};
{\ar_{\tilde{\gamma}^{0}_n} "5l";"3"};
\endxy
\vskip 3mm
\noindent and
\vskip 3mm
\xy
(0,0)*{~}="0";
(20,15)*{0}="1";
(40,15)*{K_0(A)}="2";
(70,15)*{K_0(A)}="3";
(78,15)*{\oplus}="4";
(86,15)*{H_{n+1}}="5";
(110,15)*{H_{n+1}}="6";
(130, 15)*{0}="7";
(20,0)*{0}="1l";
(40,0)*{K_0(A)}="2l";
(70,0)*{K_0(A)}="3l";
(78,0)*{\oplus}="4l";
(86,0)*{H_{n}}="5l";
(110,0)*{H_{n}}="6l";
(130, 0)*{0}="7l";
{\ar "1";"2"};
{\ar "2";"3"};
{\ar "5";"6"};
{\ar "6";"7"};
{\ar "1l";"2l"};
{\ar "2l";"3l"};
{\ar "5l";"6l"};
{\ar "6l";"7l"};
{\ar@{=} "2l";"2"};
{\ar@{=} "3l";"3"};
{\ar_{\iota_{n, n+1}} "5l";"5"};
{\ar_{\iota_{n, n+1}} "6l";"6"};
{\ar_{\tilde{\gamma}^{1}_n} "5l";"3"};
\endxy
\vskip 3mm
%
\noindent respectively, where
$$
\tilde{\gamma}_n^{0}(k_i)=\tilde{\gamma}_n^{0}([p_i])=[(w^*_np_iw_n)u_{n+1}(w_n^*p_iw_n)+(1-w^*_np_iw_n)]
$$
and $$\tilde{\gamma}_n^{1}(h_i)=\tilde{\gamma}_n^{1}([z_i])=[\p(R^*(u_{n+1}, t), w_n^*z_iw_n)],$$ and therefore, $$\tilde{\gamma}_n^{0}(k_i)=\mathrm{bott}_0(w_n^*p_iw_n, u_{n+1})=\gamma_n^0(k_i),\quad 1\leq i\leq n,$$ that is, $\tilde{\gamma}_n^{0}={\gamma}_n^{0}$. For each cross map $\tilde{\gamma}_n^1$, one has $$\Lambda\circ\tilde{\gamma}_n^1(h_i)=\mathrm{bott}(w^*_nz_iw_n, R(u_{n+1}, 1))=\mathrm{bott}(w^*_nz_iw_n, u_{n+1})=\Lambda\circ\gamma_n^1(h_i),\quad 1\leq i\leq n.$$ Since $\Lambda$ is injective, we have that $\tilde{\gamma}_n^1={\gamma}_n^1$. Hence, one has that $\eta_0(M_\alpha)=E_0$ and $\eta_1(M_\alpha)=E_1$, as desired.
\end{proof}

\begin{prop}\label{Lbot}
Let $C$ be a unital AH-algebra and let $A$ be a unital separable simple \CA\, with $\mathrm{TR}(A)=0.$
Suppose that there is a unital
monomorphism $h: C\to A.$ Then $A$ has Property $\mathrm{(B2)}$ associated with $C$ for some $\Delta_C$ as described in \ref{Pbott}.
\end{prop}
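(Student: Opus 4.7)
My plan is to reduce to the case of a homogeneous domain, and then invoke the Basic Homotopy Lemma machinery of \cite{LnHomtp} together with the refined constants $\delta^{\mathfrak{p}}$ built from Lemmas 9.6 and 9.7 of \cite{Lin-Asy}. Since $C$ is a unital AH-algebra, write $C=\varinjlim(C_n,\phi_{n,n+1})$ with each $C_n=\bigoplus_j p_{n,j}\mathrm{M}_{k_{n,j}}(\mathrm{C}(X_{n,j}))p_{n,j}$ for compact metric spaces $X_{n,j}$, and denote by $\phi_{n,\infty}\colon C_n\to C$ the inductive limit map. Given $\ep>0$, finite sets $\mathcal F\subset C$, $\mathcal P_0\subset \Kzero(C)$, and $\mathcal P_1\subset \Kone(C)$, I would first pick $n$ large enough that $\mathcal F\approx_{\ep/2}\phi_{n,\infty}(\mathcal F')$ for some finite $\mathcal F'\subset C_n$ and so that $\mathcal P_0,\mathcal P_1$ are contained in the images of $\Kzero(C_n)$ and $\Kone(C_n)$ under the induced maps to $K_*(C)$. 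Define $G_0$ and $G_1$ to be precisely those images; they are finitely generated because $C_n$ is a finite direct sum of unital homogeneous algebras over compact metric spaces, and by construction $\mathcal P_0\subset G_0$ and $\mathcal P_1\subset G_1$.

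Next, for any pair of homomorphisms $b_0\colon G_0\to \Kone(A)$ and $b_1\colon G_1\to \Kzero(A)$ satisfying the trace control in the statement, I would lift them along the surjections $\Kzero(C_n)\twoheadrightarrow G_0$ and $\Kone(C_n)\twoheadrightarrow G_1$ to obtain $\tilde b_0$ and $\tilde b_1$ defined on all of $K_*(C_n)$. Since $C_n$ has finitely generated total $K$-theory and $h\circ\phi_{n,\infty}\colon C_n\to A$ is a unital monomorphism into a unital simple \CA\ of tracial rank zero, the Basic Homotopy Lemma as formulated in \cite{LnHomtp} (whose hypotheses, after adjusting the tolerances to $\delta^{\mathfrak{p}}$ via Lemmas 9.6 and 9.7 of \cite{Lin-Asy}) yields a unitary $u\in A$ with
\[
\|[h\circ\phi_{n,\infty}(c),u]\|<\ep/2\tforal c\in\mathcal F',
\]
and such that $\mathrm{bott}_0(h\circ\phi_{n,\infty},u)$ and $\mathrm{bott}_1(h\circ\phi_{n,\infty},u)$ agree with $\tilde b_0$ and $\tilde b_1$ on the (finite) sets of classes represented by $\mathcal P_0$ and $\mathcal P_1$. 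The trace condition required by that lemma depends only on $\ep$, $\mathcal F'$, the $K$-theoretic data, and the conjugacy class of $h\circ\phi_{n,\infty}$; tracing this dependence through the chosen $n$ and through the lifts defines the required function $\Delta_C(\ep,\mathcal F,\mathcal P_0,\mathcal P_1,h)$.

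Finally, the approximation $\mathcal F\approx_{\ep/2}\phi_{n,\infty}(\mathcal F')$ upgrades the commutator estimate to $\|[h(c),u]\|<\ep$ for all $c\in\mathcal F$, while the Bott identities on $\mathcal P_0,\mathcal P_1$ persist because these classes genuinely come from $C_n$ and the construction of $\mathrm{bott}_i$ depends only on the classes, not on their representatives. The main obstacle is certifying that the Basic Homotopy Lemma cited above really does package all the required data: namely, realization of \emph{both} $\mathrm{bott}_0$ and $\mathrm{bott}_1$ simultaneously on prescribed finite subsets, with a trace bound imposed only on the $\Kone(C)\to\Kzero(A)$ component, and with constants that depend on $h$ in an admissible way. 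This is exactly the content Lin has isolated in the homogeneous case in \cite{LnHomtp}, and the inductive-limit-plus-approximation argument above transplants it to arbitrary unital AH-algebras; Remark \ref{B2sym} then allows us to arrange, a posteriori, that $\Delta_C$ and the subgroups $G_0,G_1$ depend only on the unitary conjugacy class of $h$.
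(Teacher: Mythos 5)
Your outline follows the same route as the paper: decompose $C=\varinjlim(C_n,\psi_n)$ into homogeneous building blocks, take $G_0,G_1$ to be the images of $K_*(C_{k})$ in $K_*(C)$ for a suitably large block (the paper uses an index $k(n)\ge n$ supplied by Lemma 7.5 of \cite{Lin-Asy}, with $\Delta_C$ taken to be the $\delta$ of that lemma), compose $b_0,b_1$ with the surjections from $K_*(C_{k})$, realize the resulting Bott data by a unitary almost commuting with the image of the finite set, and pass back from $\mathcal F'$ to $\mathcal F$. But the step you explicitly defer as the ``main obstacle'' is precisely the only substantive step in the paper's proof, and your proposal does not supply it. The result actually invoked is Lemma 7.5 of \cite{Lin-Asy}, an existence theorem for unitaries with prescribed Bott data, and its input is not a pair of homomorphisms $\tilde b_0\colon K_0(C_{k})\to K_1(A)$, $\tilde b_1\colon K_1(C_{k})\to K_0(A)$: it is an element $\kappa\in\mathrm{Hom}_{\Lambda}(\underline{K}(C_{k}\otimes \mathrm{C}(\mathbb{T})),\underline{K}(A))$, i.e.\ Bott data on the whole total $K$-theory, including all groups $K_i(\,\cdot\,,\mathbb{Z}/m\mathbb{Z})$, compatible with the Bockstein operations and restricting to $[h\circ\psi_{k,\infty}]$ on $\underline{K}(C_{k})$. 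Since $C$ is not assumed to have torsion-free $K$-theory, one cannot reduce to $\mathrm{bott}_0$ and $\mathrm{bott}_1$ alone; this is exactly the point the introduction makes about needing $K$-theory with coefficients, and a bare citation of the ``Basic Homotopy Lemma'' (a homotopy/uniqueness-type statement) does not deliver it.

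The missing construction is short but essential, and it is where the UCT enters: set $\kappa^{(0)}=[h\circ\psi_{k,\infty}]$ on $\underline{K}(C_{k})$; regard the pair $(b_0,b_1)$ (composed with $(\psi_{k,\infty})_{*}$) as a homomorphism on $K_*(\mathrm{S}(C_{k}))$ and use surjectivity of $\Gamma\colon KK(\mathrm{S}(C_{k}),A)\to\mathrm{Hom}(K_*(\mathrm{S}(C_{k})),K_*(A))$ (the UCT for the building block) to lift it to a class $\kappa^{(1)}$; then define $\kappa$ on $\underline{K}(C_{k}\otimes\mathrm{C}(\mathbb{T}))=\underline{K}(C_{k})\oplus\boldsymbol{\beta}(\underline{K}(C_{k}))$ by $\kappa^{(0)}$ and $\kappa^{(1)}$ on the two summands. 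With that in place, Lemma 7.5 of \cite{Lin-Asy} gives the unitary $u$ with the commutator bound and with $\mathrm{Bott}(h,u)$ (hence in particular $\mathrm{bott}_0$ and $\mathrm{bott}_1$ on $\mathcal P_0,\mathcal P_1$) as required, and the trace hypothesis imposed only on the $K_1$-component is exactly how that lemma is stated, so that part of your worry is fine once $\kappa$ is built. Two minor remarks: your appeal to Remark \ref{B2sym} is unnecessary for Proposition \ref{Lbot} itself, since Property (B2) permits $\Delta_C$ to depend on $h$; and your $\ep/2$-approximation of $\mathcal F$ by elements from a building block is harmless but not how the paper sets things up --- the dependence on $\mathcal F$ is absorbed directly into the $\delta$ of Lemma 7.5.
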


\begin{proof}
Fix $\ep>0,$ finite subset ${\cal F}\subset C,$ ${\cal P}_0\subset \Kzero(C)$ and ${\cal P}_1\subset \Kone(C).$
Write $C=\varinjlim(C_n, \psi_n)$ so that $C_n$ and $\psi_n$ satisfy the conditions in 7.2 of \cite{Lin-Asy}.
Let $\Delta_B(\ep, {\cal F}, {\cal P}_0, {\cal P}_1, h)$ be $\dt$ as required by Lemma 7.5 of \cite{Lin-Asy}  for the above
$\ep,$ ${\cal F},$ ${\cal P}={\cal P}_0\cup {\cal P}_1.$
Let $n\ge 1$ be an integer in 7.5 of \cite{Lin-Asy} so we may assume that
$$
{\cal P}\subset \cup_{i=0,1}(\psi_{n, \infty})_{*i}({K}_i(C_n))
$$
and let $k(n)\ge n$ be as in 7.5 of \cite{Lin-Asy}.

Put $G_i=(\psi_{k(n), \infty})_{*i}({K}_i(C_{k(n)}),$ $i=0,1.$ In particular, $G_i$ is finitely generated and
${\cal P}_i\subset G_i,$ $i=0,1.$

Let $b_i: G_i\to {K}_{i-1}(A)$  be given,
$i=0,1.$ Write
\beq
{K}_i(C_{k(n)}\otimes \mathrm{C}(\mathbb{T}))&=&{K}_i(C_{k(n)})\oplus {\boldsymbol{\beta}}({K}_{i-1}(C_{k(n)})),\,\,\,i=0,1\andeqn\\
\underline{{K}}(C_{k(n)}\otimes \mathrm{C}(\mathbb{T}))&=&\underline{{K}}(C_{k(n)})\oplus {\boldsymbol{\beta}}(\underline{{K}}(C_{k(n)}))
\eneq
(see 2.10 of \cite{LnHomtp}).

Define $\kappa^{(0)}: \underline{{K}}(C_{k(n)})\to \underline{{K}}(A)$ by $\kappa^{(0)}=[h\circ \psi_{k(n), \infty}].$
Define $\kappa_i^{(1)}: {\boldsymbol{\beta}}({K}_i(C_{k(n)}))\to  {K}_{i-1}(A)$ by
$$
\kappa_i^{(1)}\circ {\boldsymbol{\beta}}(x)=b_{i-1}(x)
$$
for $x\in {K}_{i-1}(C_{k(n)}),$ $i=0,1.$  Since $C$ satisfies the UCT, there is $\kappa^{(1)}\in {KK}(\mathrm{S}(C_{k(n)}), A)$ such that
$\Gamma(\kappa^{(1)})=\kappa_i^{(1)}.$
Define $\kappa:\underline{{K}}(C_{k(n)}\otimes \mathrm{C}(\mathbb{T}))\to\underline{K}(A)$ by
$\kappa|_{\underline{{K}}(C_{k(n)})}=\kappa^{(0)}$ and
$\kappa|_{{\boldsymbol{\beta}}(\underline{{K}}(C_{k(n)})}=\kappa^{(1)}.$

The lemma then follows from Lemma 7.5 of \cite{Lin-Asy}.
\end{proof}

\begin{lem}\label{KL}
Let $B$ be a unital separable simple amenable C*-algebra with
$\mathrm{TR}(B)=0$ which satisfies the UCT, and let $A$ be a
unital simple \CA\, with real rank zero, stable rank one and
weakly unperforated $K_0(A).$ Suppose that $\bar{\kappa}\in
{KL}(B, A)^{++}$ with $\bar\kappa([1_B])\le [1_A]$ in
$\Kzero(A).$ Then there is a monomorphism $\alpha: B\to A$ such
that
\beq\label{KL-1}
[\alpha]=\bar{\kappa}\,\,\,\mathrm{in}\,\,\, {KL}(B,A).
\eneq
\end{lem}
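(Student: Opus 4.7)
The plan is to reduce to the unital $KL$-lifting theorem of \cite{LnKT}. Since $A$ has real rank zero and $K_0(A)$ is weakly unperforated, and since $\bar{\kappa}\in{KL}(B,A)^{++}$ satisfies $0<\bar{\kappa}([1_B])\le[1_A]$ in $K_0(A)$, there exists a projection $p\in A$ with $[p]=\bar{\kappa}([1_B])$. The cut-down $pAp$ is a unital simple \CA\, which inherits real rank zero, stable rank one, and weak unperforation of $K_0$ from $A$, so the hypotheses on the target algebra are preserved under passage to $pAp$.

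Viewing $\bar{\kappa}$ as an element of ${KL}_e(B,pAp)^{++}$ via the natural identification of $K_0(pAp)$ as a hereditary subgroup of $K_0(A)$ containing $[p]$, one applies the unital lifting result of \cite{LnKT} to produce a unital monomorphism $\beta:B\to pAp$ with $[\beta]=\bar{\kappa}$ in ${KL}(B,pAp)$. Composing with the inclusion $\iota:pAp\hookrightarrow A$ then yields the desired monomorphism $\alpha=\iota\circ\beta:B\to A$ satisfying $[\alpha]=\bar{\kappa}$ in ${KL}(B,A)$.

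The main technical content lies in the unital lifting theorem of \cite{LnKT}, whose strategy would be to write $B=\varinjlim(B_n,\psi_n)$ as an AH inductive system with each $K_*(B_n)$ finitely generated, use existence theorems at each finite stage to produce monomorphisms $\alpha_n:B_n\to A$ realizing the restricted $KL$-elements (which coincide with $KK$-elements because of the finite generation), and then apply an Elliott-style approximate intertwining driven by a uniqueness theorem (approximate unitary equivalence of maps with the same $KL$-class) to assemble a coherent family converging to a monomorphism $\alpha:B\to A$ with $[\alpha]=\bar{\kappa}$ in ${KL}(B,A)$. The restriction to $KL$ rather than $KK$ is essential: the $KK$-functor fails to commute with inductive limits, whereas $KL$ is well-behaved under them, which is precisely why this intermediate lemma is needed before the harder $KK$-lifting step pursued in the rest of the paper.

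The main obstacle would be verifying that the existence and uniqueness theorems required to drive the intertwining argument remain valid for target algebras $A$ satisfying only real rank zero, stable rank one, and weakly unperforated $K_0(A)$, rather than the stronger assumption $\mathrm{TR}(A)=0$; the positivity hypothesis $\bar{\kappa}\in{KL}(B,A)^{++}$ is what ensures the approximate maps $\alpha_n$ can be arranged to be injective, so that the limit $\alpha$ is indeed a monomorphism rather than merely a homomorphism.
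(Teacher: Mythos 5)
There is a genuine gap on the domain side. Your reduction to a corner and your appeal to the lifting theorem of \cite{LnKT} are fine as far as the target algebra is concerned, but the theorem you are citing (and the proof strategy you sketch for it) is a statement about AH-algebras: it requires the domain to be presented as an inductive limit of homogeneous building blocks with finitely generated $K$-theory. In the lemma, $B$ is only assumed to be a unital separable simple amenable \CA\ with $\mathrm{TR}(B)=0$ satisfying the UCT; the step ``write $B=\varinjlim(B_n,\psi_n)$ as an AH inductive system'' is not available without further argument, and it is precisely here that amenability and the UCT are used. The paper's proof consists of exactly this missing step: by the classification theorem of \cite{Lnduke}, $B$ is isomorphic to a unital simple AH-algebra with slow dimension growth and real rank zero, after which Theorem 4.6 of \cite{LnKT} applies immediately. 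Without invoking (or reproving) that classification theorem, your argument does not get started, since an abstract tracially AF algebra comes with no AH structure.

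Two smaller remarks. The cut-down to $pAp$ (using real rank zero, stable rank one and weak unperforation to represent $\bar\kappa([1_B])$ by a projection $p\le 1_A$, and the fullness of $p$ in the simple algebra $A$ to identify $\underline{K}(pAp)$ with $\underline{K}(A)$) is correct but unnecessary: Theorem 4.6 of \cite{LnKT} is already stated for $\bar\kappa([1_B])\le[1_A]$ and yields a (not necessarily unital) monomorphism. Also, the ``main obstacle'' you flag --- whether the existence and uniqueness machinery survives when $A$ is only assumed to have real rank zero, stable rank one and weakly unperforated $\Kzero(A)$ rather than $\mathrm{TR}(A)=0$ --- is not an obstacle once one cites \cite{LnKT}, since that is exactly the generality in which its Theorem 4.6 is proved; the genuine issue is the AH structure of $B$, which you have assumed rather than established.
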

\begin{proof}
It follows from the classification theorem
(\cite{Lnduke}) that $B$ is a unital simple AH-algebra with slow
dimension growth and with real rank zero. Then the lemma follows from
Theorem 4.6 of \cite{LnKT} immediately.
\end{proof}

\begin{thm}\label{Tkk}
Let $A$ and $B$ be unital simple \CA s with $\mathrm{TR}(A)=0$ and $\mathrm{TR}(B)=0$. Assume that $B$ is separable amenable and satisfies the UCT.  Then, for any $\kappa\in {KK}(B,A)^{++}$ with
$\kappa([1_B])\le [1_A]$ in $\Kzero(A)$, there is a
monomorphism $\alpha: B\to A$ such that $[\alpha]=\kappa$ in
${KK}(B,A).$
\end{thm}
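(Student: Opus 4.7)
The plan is to decompose the problem into a $KL$-lifting step followed by a correction by pure extensions, exploiting the fact that Theorem \ref{kk-lifting} realizes arbitrary pure mapping-torus extensions by elements of $\overline{\mathrm{Inn}}$. First I would apply Lemma \ref{KL} to the image $\bar\kappa\in KL(B,A)^{++}$ of $\kappa$ to produce a monomorphism $\alpha_0:B\to A$ with $[\alpha_0]=\bar\kappa$ in $KL(B,A)$. Setting $p:=\alpha_0(1_B)$, one views $\alpha_0$ as a unital monomorphism from $B$ into the full corner $pAp$, which inherits $\mathrm{TR}=0$; moreover $KK(B,pAp)\cong KK(B,A)$ via the inclusion $pAp\hookrightarrow A$, so it is enough to match $\kappa$ inside $KK(B,pAp)$.

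Next, the UCT combined with the identification $KL(B,A)=\mathrm{Hom}_\Lambda(\underline K(B),\underline K(A))$ recalled in \ref{DKL} gives a short exact sequence
\[
0\to\mathrm{Pext}(\Kone(A),\Kzero(B))\oplus\mathrm{Pext}(\Kzero(A),\Kone(B))\to KK(B,A)\to KL(B,A)\to 0.
\]
Since $\kappa$ and $[\alpha_0]$ agree in $KL(B,A)$, the difference $\kappa-[\alpha_0]$ corresponds to a unique pair of pure extensions $(E_0,E_1)$ with $E_0\in\mathrm{Pext}(\Kone(pAp),\Kzero(B))$ and $E_1\in\mathrm{Pext}(\Kzero(pAp),\Kone(B))$.

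I then verify that $pAp$ and the subalgebra $\alpha_0(B)$ satisfy all the hypotheses of Theorem \ref{kk-lifting}: $pAp$ contains a positive element with spectrum $[0,1]$ (being simple, unital, and infinite-dimensional with real rank zero); it has Property $(\mathrm{B1})$ by Corollary \ref{Ccomm}; its $\Kzero$ is tracially approximately divisible, as invoked in Corollary \ref{Ccomm1}; and because $\mathrm{TR}(B)=0$ makes $B$ a unital AH-algebra by the classification of \cite{Lnduke}, Proposition \ref{Lbot} delivers Property $(\mathrm{B2})$ associated with $\alpha_0(B)\cong B$. Theorem \ref{kk-lifting} then yields $\alpha'\in\overline{\mathrm{Inn}}(\alpha_0(B),pAp)$ with $\eta_0(M_{\alpha'})=E_0$ and $\eta_1(M_{\alpha'})=E_1$. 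Setting $\alpha:=\alpha'\circ\alpha_0:B\to A$, the $KL$-class is preserved (because $\alpha'$ is a pointwise limit of inner automorphisms of $pAp$), so $[\alpha]=\bar\kappa$ in $KL(B,A)$; and the mapping torus $M_{\alpha_0,\alpha}$ is the pullback of $M_{\iota,\alpha'}$ along the isomorphism $B\to\alpha_0(B)$, so its pair of mapping-torus extensions is again $(E_0,E_1)$, which forces $[\alpha]-[\alpha_0]=\kappa-[\alpha_0]$ in $KK(B,A)$, giving $[\alpha]=\kappa$.

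The main obstacle is the last identification: one must check that the pair $(\eta_0(M_{\alpha'}),\eta_1(M_{\alpha'}))$ produced by Theorem \ref{kk-lifting} literally represents the Pext component of $[\alpha']-[\iota]$ in the decomposition displayed above, with compatible sign conventions, and similarly that it transports correctly under the Morita isomorphism $KK(B,pAp)\cong KK(B,A)$. This is the classical bridge between the $KK$-extension picture and the mapping-torus picture implicit in \cite{DL}; once this identification is in place, the theorem follows.
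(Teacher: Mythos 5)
Your proposal is correct and follows essentially the same route as the paper's own proof: first lift $\bar\kappa$ via Lemma \ref{KL}, regard $B$ as a unital subalgebra of the corner $\alpha_0(1_B)A\alpha_0(1_B)$, and then absorb the difference $\kappa-[\alpha_0]\in\mathrm{Pext}(K_*(B),K_{*+1}(A))$ by an element of $\overline{\mathrm{Inn}}$ supplied by Theorem \ref{kk-lifting}, with Property (B1), (B2) and tracial approximate divisibility checked exactly as you indicate (via Corollary \ref{Ccomm}, Proposition \ref{Lbot} and the classification theorem making $B$ an AH-algebra). The identification you flag at the end, between the mapping-torus extensions $(\eta_0,\eta_1)$ and the Pext component of the $KK$-difference, is likewise used (implicitly) in the paper, so your write-up simply makes explicit the hypothesis-verification and bookkeeping that the paper's short proof leaves to the reader.
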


\begin{proof}
Denote by $\bar{\kappa}$ the image of $\kappa$ in
${KL}(B, A)^+$. It follows from Lemma \ref{KL} that there exists
$\alpha_1\in\mathrm{Hom}(B, A)$ such that
$[\alpha_1]_{{KL}}=\bar{\kappa}$. By considering the cut-down of $A$ by $\alpha_1(1_B)$, we can regard
$B$ as a unital C*-subalgebra of $A$ with embedding $\alpha_1$. Since
$\kappa-[\alpha_1]_{{KK}}\in\mathrm{Pext}({K}_*(B),
{K}_{*+1}(A))$, by Theorem \ref{kk-lifting}, there is an
approximately inner monomorphism $\alpha_2$ of from $B$ to $A$ such that
$[\alpha_2\circ\alpha_1]_{{KK}}-[\alpha_1]_{{KK}}=
\kappa-[\alpha_1]_{{KK}}$. Then,
$\alpha:=\alpha_2\circ\alpha_1$ is the desired homomorphism.
\end{proof}

Let us recall the following theorem from \cite{Lnrange}:

\begin{thm}[Theorem 5.2 of \cite{Lnrange}]\label{LQ}
Let $C$ be a unital AH-algebra and let $A$ be a unital simple
\CA\, with $\mathrm{TR}(A)=0.$ Suppose that there is a pair $\kappa\in
{KL}_e(C,A)^{++}$ and a continuous affine map $\lambda: \mathrm{T}(A)\to
\mathrm{T}_{\mathtt{f}}(C)$ which is compatible with $\kappa.$ Then there exists a
unital monomorphism $h: C\to A$ such that
$$
[h]=\kappa\andeqn h_{\mathrm{T}}=\lambda.
$$
\end{thm}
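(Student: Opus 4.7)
The plan is to decompose $C$ as an inductive limit of building blocks and produce the required monomorphism by an Elliott-type approximate intertwining. Write $C = \varinjlim (C_n, \psi_n)$ with each $C_n = \bigoplus_i \mathrm{M}_{k(n,i)}(\mathrm{C}(X_{n,i}))$ a homogeneous unital AH building block. The pair $(\kappa, \lambda)$ restricts along $\psi_{n,\infty}$ to a compatible pair $(\kappa_n, \lambda_n) \in {KLT}_e(C_n, A)^{++}$. The goal is to realize each $(\kappa_n, \lambda_n)$ by a unital $*$-homomorphism $h_n: C_n \to A$, arrange that $h_{n+1} \circ \psi_n$ is approximately unitarily equivalent to $h_n$ on progressively larger finite subsets, and pass to the limit.

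The two essential tools are an existence and a uniqueness theorem for $*$-homomorphisms from homogeneous blocks into unital simple \CA{}s of tracial rank zero. The existence theorem asserts: for $D = \mathrm{M}_k(\mathrm{C}(X))$ with $X$ a finite CW complex, compatible $(\tilde\kappa, \tilde\lambda) \in {KLT}_e(D, A)^{++}$, finite $\mathcal{F} \subset D$, finite $\mathcal{P} \subset \underline{K}(D)$, and $\varepsilon > 0$, there exists a unital $*$-homomorphism $\phi: D \to A$ whose induced map on $\underline{K}(D)$ agrees with $\tilde\kappa$ on $\mathcal{P}$ and whose induced trace map is within $\varepsilon$ of $\tilde\lambda$ on $\mathcal{F}$. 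The idea is to exploit $\mathrm{TR}(A) = 0$: cut $A$ by a projection $p$ close to the unit whose complement absorbs corrections, define $\phi$ on $pAp$ as a model assembled from point evaluations in $X_{n,i}$ with spectral multiplicities dictated by $\tilde\lambda$, and correct the $K_1$ and torsion coefficient data inside $(1-p)A(1-p)$ using divisibility of $K_0(A)$ together with the UCT for $D$. The uniqueness theorem asserts that two such homomorphisms whose partial invariants match on a sufficiently large finite portion of $\underline{K}(D)$ and on a sufficiently controlled portion of $\mathrm{T}(A)$ are approximately unitarily equivalent on $\mathcal{F}$; this is the Lin-type uniqueness theorem for AH-to-TR=0 maps.

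Granted these, a standard approximate intertwining on sequences $\mathcal{F}_n \uparrow C$, $\mathcal{P}_n \uparrow \underline{K}(C)$, $\varepsilon_n \downarrow 0$ produces unitaries $u_n \in A$ with $\mathrm{Ad}(u_n) \circ h_{n+1} \circ \psi_n \approx_{\varepsilon_n} h_n$ on $\mathcal{F}_n$, and in the limit a unital $*$-homomorphism $h: C \to A$ with $[h]|_{\mathcal{P}_n} = \kappa|_{\mathcal{P}_n}$ for every $n$. Since $KL(C, A) = \mathrm{Hom}_\Lambda(\underline{K}(C), \underline{K}(A))$ is determined by its restrictions to finite subsets of $\underline{K}(C)$, this forces $[h] = \kappa$ in $KL$; moreover $\tau \circ h = \lambda(\tau)$ for every $\tau \in \mathrm{T}(A)$ by construction, and injectivity of $h$ follows because $\lambda(\tau) \in \mathrm{T}_{\mathtt{f}}(C)$ is faithful on $C$. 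The main obstacle is the existence theorem at the block level: simultaneously pinning down the full $\underline{K}$-data, including every coefficient group $K_i(D, \mathbb{Z}/n\mathbb{Z})$, together with the trace data by a genuine $*$-homomorphism rather than a merely approximately multiplicative c.p.c.\ map requires careful coordination between the finite-dimensional model in $pAp$ and the K-theoretic corrections in $(1-p)A(1-p)$; this is the real technical content of Theorem 5.2 of \cite{Lnrange}.
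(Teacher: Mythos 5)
First, a point of comparison: the paper does not prove this statement at all --- it is imported verbatim as Theorem 5.2 of \cite{Lnrange}, so there is no internal proof to measure yours against. Your outline (write $C=\varinjlim(C_n,\psi_n)$ with homogeneous blocks, realize the restricted invariant on each block by an existence theorem, match consecutive maps with a uniqueness theorem for homomorphisms from homogeneous algebras into simple \CA s with $\mathrm{TR}=0$, run a one-sided approximate intertwining, and get injectivity of the limit from faithfulness of $\lambda(\tau)$) is indeed the strategy underlying the cited result, and the closing steps are fine: $KL(C,A)=\mathrm{Hom}_{\Lambda}(\underline{K}(C),\underline{K}(A))$ is controlled by its restrictions to finite subsets, $\tau\circ h=\lambda(\tau)$ passes to the limit, and $h(c)\neq 0$ for $c\geq 0$ nonzero because $\lambda(\tau)(c)>0$.

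As a proof, however, there are two gaps. The essential one you concede yourself: the block-level existence theorem (an honest unital $*$-homomorphism from $\mathrm{M}_k(\mathrm{C}(X))$ into $A$ realizing prescribed data on all of $\underline{K}$, including every $K_i(\cdot,\Z/n\Z)$, simultaneously with prescribed tracial data) together with the companion uniqueness theorem \emph{is} the technical content of \cite{Lnrange} and of Lin's uniqueness machinery for $\mathrm{TR}=0$ targets; asserting these is in effect citing the theorem you were asked to prove, so what you have is a reduction, not a proof. Second, a concrete inaccuracy: the restricted pair $(\kappa_n,\lambda_n)=(\kappa\circ[\psi_{n,\infty}],\,\lambda(\cdot)\circ\psi_{n,\infty})$ need not lie in ${KLT}_e(C_n,A)^{++}$ --- if $\psi_{n,\infty}$ is not injective, nonzero classes in $\Kzero(C_n)_+$ may be sent to $0$ and $\lambda_n(\tau)$ need not be faithful on $C_n$ --- so the block data are merely positive and compatible, not strictly positive and faithful. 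This is repairable (allow possibly non-injective block maps $h_n$, or replace $C_n$ by its image $\psi_{n,\infty}(C_n)$, a quotient which is again homogeneous but over a closed subset that need not be a finite CW complex), but the existence statement you invoke must then be formulated for such degenerate data, and the appeal to faithfulness must be postponed to the limit, as you in fact do only for the injectivity of $h$.
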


\begin{thm}\label{TmAH}
Let  $C$ be a unital AH-algebra and let $A$ be a  unital simple \CA s with $\mathrm{TR}(A)=0.$  Suppose that
$(\kappa, \lambda)\in {KKT}(C, A)^{++}.$
Then there is a
monomorphism $\phi: C\to A$ such that $[\phi]=\kappa$ and $\phi_\mathrm{T}=\lambda.$
\end{thm}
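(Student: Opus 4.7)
The proof will follow the same two-stage pattern as the proof of Theorem~\ref{Tkk}: first invoke Theorem~\ref{LQ} to obtain a monomorphism that realises the data at the $KL$-level together with the correct tracial map, and then invoke Theorem~\ref{kk-lifting} to promote this $KL$-lift to a $KK$-lift by composing with an approximately inner perturbation. Since the perturbation is approximately inner, it will automatically preserve both the tracial information and the $KL$-class, so the only content will be in controlling the $KK/KL$-difference through mapping torus extensions.

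In detail, let $\bar\kappa \in {KL}_e(C,A)^{++}$ denote the image of $\kappa$. Since $(\kappa,\lambda) \in {KKT}(C,A)^{++}$, one has $(\bar\kappa,\lambda) \in {KLT}(C,A)^{++}$, so Theorem~\ref{LQ} produces a unital monomorphism $h: C \to A$ with $[h] = \bar\kappa$ in ${KL}(C,A)$ and $h_\mathrm{T} = \lambda$. Identify $C$ with $h(C) \subseteq A$ and let $\iota:h(C)\hookrightarrow A$ denote the inclusion. The class $\kappa - [h] \in {KK}(C,A)$ maps to zero in ${KL}(C,A)$, so by the UCT it is determined by a pair of pure extensions $E_0 \in \mathrm{Pext}(\Kone(A), \Kzero(C))$ and $E_1 \in \mathrm{Pext}(\Kzero(A),\Kone(C))$.

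Next, apply Theorem~\ref{kk-lifting} with $B = h(C)$. The hypotheses hold in the present setting: $A$ is unital and simple; $A$ contains a positive element with spectrum $[0,1]$ (as $A$ is simple and infinite-dimensional); Property~$\mathrm{(B1)}$ holds by Corollary~\ref{Ccomm}; Property~$\mathrm{(B2)}$ associated with $C$ for some $\Delta_C$ holds by Proposition~\ref{Lbot}, applied to the embedding $h$ just produced; and $\Kzero(A)$ is tracially approximately divisible, a standard consequence of $\mathrm{TR}(A)=0$ already used in the proof of Corollary~\ref{Ccomm1}. Theorem~\ref{kk-lifting} then yields $\alpha \in \overline{\mathrm{Inn}}(h(C),A)$ such that $\eta_0(M_\alpha) = E_0$ and $\eta_1(M_\alpha) = E_1$; equivalently, the difference $[\alpha] - [\iota] \in {KK}(h(C),A)$ corresponds to $\kappa - [h]$ under the natural identification ${KK}(h(C),A) \cong {KK}(C,A)$ induced by $h$.

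Set $\phi := \alpha \circ h$. Since $\alpha \in \overline{\mathrm{Inn}}(h(C),A)$, every $\tau \in \mathrm{T}(A)$ satisfies $\tau \circ \alpha = \tau|_{h(C)}$, so $\phi_\mathrm{T} = h_\mathrm{T} = \lambda$. At the $KK$-level, $[\phi] = [h] + (\kappa - [h]) = \kappa$, and $\phi$ is a unital monomorphism since both factors are. The most delicate bookkeeping point is the identification in the penultimate step: verifying that the pair $(\eta_0(M_\alpha),\eta_1(M_\alpha))$ produced by Theorem~\ref{kk-lifting} really does compute the $\mathrm{Pext}$-component of $[\alpha\circ h] - [h]$ inside ${KK}(C,A)$, so that prescribing $(E_0,E_1)$ is equivalent to prescribing the $KK$-class $\kappa$. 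Once this correspondence is cleanly in place, the argument is essentially a translation of the proof of Theorem~\ref{Tkk}, with Theorem~\ref{LQ} supplying the additional tracial information that Lemma~\ref{KL} did not carry.
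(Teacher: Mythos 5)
Your proposal is correct and follows essentially the same route as the paper: Theorem \ref{LQ} supplies the $KL$-lift $h$ with $h_{\mathrm{T}}=\lambda$, Theorem \ref{kk-lifting} (with hypotheses verified via Corollary \ref{Ccomm}/Proposition \ref{Lbot}, exactly as in the proof of Theorem \ref{Tkk}) supplies $\alpha\in\overline{\mathrm{Inn}}(h(C),A)$ correcting the $\mathrm{Pext}$-difference $\kappa-[h]$, and $\phi=\alpha\circ h$ preserves traces because $\alpha$ is a pointwise limit of inner automorphisms. This is precisely the paper's argument, with slightly more explicit bookkeeping of the hypotheses.
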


\begin{proof}
It follows from Theorem \ref{LQ} that there exists a unital monomorphism $\psi: C\to A$ such that
$$
[\psi]_{{KL}}={\bar \kappa}\,\,\,{\rrm{in}}\,\,\,{KL}(C,A)\andeqn \phi_\mathrm{T}=\lambda.
$$
Then $\kappa-[\psi]\in \mathrm{Pext}({K}_*(C), {K}_*(A)).$ As in the proof of \ref{Tkk}, we obtain a unital monomorphism
$\alpha: \psi(C)\to A$ for which
$$
[\alpha\circ \psi]=[\kappa]\,\,\,{\rrm in}\,\,\, {KK}(C,A)
$$
and  there exists a sequence of unitaries $\{u_n\}\subset \mathrm{U}(A)$ such that
$$
\lim_{n\to\infty}{\rrm{ ad}}\, u_n\circ \phi(c)=\alpha\circ \phi(c) \tforal c\in C.
$$
Put $\phi=\alpha\circ \psi.$
Then
\beq
\lambda(\tau)(c)&=&\tau\circ \psi(c)=\lim_{n\to\infty}\tau({\rrm{ ad}}\, u_n\circ \psi(c))\\
&=&\tau(\alpha\circ \psi(c))=\tau\circ \phi(c)\tforal c\in C.
\eneq
It follows that
$$
\phi_\mathrm{T}=\lambda,
$$ as desired.
\end{proof}

\begin{rem}
It was shown in \cite{Lnrange} that there are compact metric space
$X,$ unital simple AF-algebras $A$ and ${\bar \kappa}\in
{KL}_e(\textrm{C}(X),A)^{++}$ for which there is no unital homomorphism
$h: \mathrm{C}(X)\to A$ so that $[h]={\bar \kappa}.$ Thus the information on
 $\lambda$ is essential in general. In the
case that $C$ is also simple and has real rank zero, then the map
$\lambda$ is completely determined by ${\bar \kappa}$ since
$\rho_C(\Kzero(C))$ is dense in $\mathrm{Aff}(\mathrm{T}(C))$ and
$\mathrm{T}(C)=\mathrm{T}_{\mathtt{f}}(C).$ If the C*-algebra $C$
is real rank zero and exact, without assuming the simplicity,
then $\mathrm{T}(C)=\mathrm{S}(\Kzero(C))$, and hence one can define the map $r:
\mathrm{T}(A)\to\mathrm{T}(C)$ by factoring through
$\mathrm{S}(\Kzero(A))$. It is obviously that $r$ is compatible
with $\bar\kappa$. Moreover, $r(\tau)$ is faithful on $C$ for any
$\tau\in\mathrm{T}(A)$. Indeed, if $r(\tau)(c)=0$ for some nonzero positive element
$c\in C$, then $r(\tau)(\overline{cCc})=\{0\}$. In particular,
$r(\tau)(p)=0$ for some nonzero projection $p\in\overline{cCc}$,
which contradicts the strict positivity of $\bar\kappa$.
\end{rem}

\begin{lem}\label{surj}
Let $X$ be a Banach space, and let $\{\alpha_n\}$ be a sequence of isometries. If each $\alpha_n$ is invertible, and $\displaystyle{\lim_{n\to\infty}}\alpha_n(x)$ and $\displaystyle{\lim_{n\to\infty}}\alpha_n^{-1}(x)$ exist for any $x\in X$, then $\alpha:=\displaystyle{\lim_{n\to\infty}}\alpha_n$ is invertible.
\end{lem}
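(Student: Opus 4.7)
The plan is to show that $\beta(x):=\lim_n\alpha_n^{-1}(x)$ is a two-sided inverse for $\alpha$. First I would observe that the pointwise limit of isometries is an isometry (since $\|\alpha_n(x)\|=\|x\|$ for all $n$ passes to the limit), so both $\alpha$ and $\beta$ are well-defined isometries on $X$; in particular they are bounded linear operators of norm at most $1$.

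Next, I would verify $\alpha\circ\beta=\mathrm{id}_X$ by the standard $3\epsilon$-estimate. For any $x\in X$ and any $n$,
\[
\|\alpha(\beta(x))-x\|\le \|\alpha(\beta(x))-\alpha_n(\beta(x))\|+\|\alpha_n(\beta(x))-\alpha_n(\alpha_n^{-1}(x))\|+\|\alpha_n(\alpha_n^{-1}(x))-x\|.
\]
The first term tends to $0$ by pointwise convergence $\alpha_n\to\alpha$ (evaluated at the fixed vector $\beta(x)$); the second term equals $\|\beta(x)-\alpha_n^{-1}(x)\|$ because each $\alpha_n$ is an isometry, and it tends to $0$ by definition of $\beta$; the third term is zero. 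Hence $\alpha(\beta(x))=x$. The symmetric argument, using that $\alpha_n^{-1}$ is also an isometry with $\alpha_n^{-1}(\alpha_n(y))=y$, gives $\beta(\alpha(x))=x$.

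Therefore $\alpha$ is invertible with $\alpha^{-1}=\beta$. There is essentially no obstacle here; the only subtlety is remembering to swap the order in $\|\alpha_n(\beta(x))-\alpha_n(\alpha_n^{-1}(x))\|$ and use the isometry property to reduce it to a quantity one already controls, rather than trying to pass a limit inside an operator that has no uniform modulus of continuity beyond the isometric one.
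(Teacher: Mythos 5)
Your proposal is correct and follows essentially the same route as the paper: define $\beta=\lim_n\alpha_n^{-1}$, then show $\alpha\circ\beta=\mathrm{id}$ and $\beta\circ\alpha=\mathrm{id}$ by a triangle-inequality estimate that uses pointwise convergence at the fixed vector $\beta(x)$ together with the isometry of $\alpha_n$ to reduce $\|\alpha_n(\beta(x))-\alpha_n(\alpha_n^{-1}(x))\|$ to $\|\beta(x)-\alpha_n^{-1}(x)\|$. Your three-term split is in fact a slightly cleaner bookkeeping of the paper's own $4\ep$ estimate, so there is nothing further to add.
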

\begin{proof}
Denote by $\beta=\displaystyle{\lim_{n\to\infty}\alpha^{-1}_n}$. It is clear that $\alpha$ and $\beta$ are isometries. Fix an element $x\in X$. For any $\ep>0$, there exists $N$ such that
$$\norm{\beta(x)-\alpha^{-1}_n(x)}\leq\ep\quad\textrm{and}\quad\norm{\alpha\circ\beta(x)-\alpha_n\circ\beta(x)}\leq\ep$$for any $n\geq N.$ Then
\begin{eqnarray*}
\norm{\alpha\circ\beta(x)-x}&\leq&\norm{\alpha\circ\alpha^{-1}_n(x)-x}+\norm{\alpha\circ\beta(x)-\alpha\circ\alpha^{-1}_n(x)}\\
&\leq&\norm{\alpha\circ\alpha_n^{-1}(x)-\alpha_n\circ\alpha^{-1}_n(x)}+\ep\\
&\leq&\norm{\alpha\circ\alpha_n^{-1}(x)-\alpha\circ\beta(x)}+\norm{\alpha\circ\beta(x)-\alpha_n\circ\beta(x)}+\norm{\alpha_n\circ\beta(x)-\alpha_n\circ\alpha^{-1}_n(x)}+\ep\\
&\leq&4\ep.
\end{eqnarray*}
Since $\ep$ is arbitrary, one has that $\alpha\circ\beta(x)=x$, and hence $\alpha\circ\beta=\textrm{id}$. The argument same as above shows that $\beta\circ\alpha=\textrm{id}$. Therefore, $\alpha$ is invertible, as desired.
\end{proof}

\begin{defn}\label{invKL}
Let $A$ be a unital C*-algebra. Denote by $KK^{-1}_e(A, A)^{++}$ the
set of those elements $\kappa\in {KK}_e(A, A)^{++}$ such that
$\kappa$ induces an ordered isomorphism between $\Kzero(A)$ and
isomorphisms between $\underline{K}(A)$.
\end{defn}

\begin{cor}\label{Caut0}
Let $A$ be a unital separable amenable simple \CA\, with $\mathrm{TR}(A)=0$ and satisfies the UCT. Then, for any $\kappa\in {KK}_e^{-1}(A, A)^{++}$,
 there exists an automorphism $\alpha\in \mathrm{Aut}(A)$ such that
$$
[\alpha]=\kappa.
$$
\end{cor}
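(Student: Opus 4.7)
The plan is to apply Theorem \ref{TmAH} twice---once to $\kappa$ and once to $\kappa^{-1}$---to produce a pair of mutually approximate-inverse monomorphisms of $A$, and then to upgrade one of them to an automorphism via an Elliott approximate intertwining whose invertibility is certified by Lemma \ref{surj}.

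First I will assemble the data needed to apply Theorem \ref{TmAH}. Since $A$ is unital, separable, simple, amenable, satisfies the UCT and has $\mathrm{TR}(A)=0$, the classification theorem of \cite{Lnduke} realizes $A$ as a unital simple AH-algebra, so Theorem \ref{TmAH} applies with $(C,A)=(A,A)$; moreover $A$ has real rank zero and stable rank one, so $\rho_A(K_0(A))$ is dense in $\mathrm{Aff}(\mathrm{T}(A))$ and $\mathrm{T}(A)=\mathrm{T}_{\mathtt{f}}(A)$. Because $\kappa$ induces an order isomorphism of $K_0(A)$ fixing $[1_A]$, the assignment $\tau\mapsto\bigl([p]\mapsto\tau(\kappa([p]))\bigr)$ extends by density to a continuous affine homeomorphism $\lambda:\mathrm{T}(A)\to\mathrm{T}(A)$ compatible with $\kappa$, whose inverse is compatible with $\kappa^{-1}$. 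Theorem \ref{TmAH} then yields unital monomorphisms $\phi,\psi:A\to A$ with $[\phi]=\kappa$, $\phi_{\mathrm{T}}=\lambda$ and $[\psi]=\kappa^{-1}$, $\psi_{\mathrm{T}}=\lambda^{-1}$. The composition $\psi\circ\phi$ thus induces $[\mathrm{id}_A]$ in $KK(A,A)$ and the identity on $\mathrm{T}(A)$, and likewise for $\phi\circ\psi$. The uniqueness theorem for unital monomorphisms between unital separable simple amenable C*-algebras of tracial rank zero satisfying the UCT (from \cite{Lnduke}) therefore gives $\psi\circ\phi\sim_{\mathrm{au}}\mathrm{id}_A$ and $\phi\circ\psi\sim_{\mathrm{au}}\mathrm{id}_A$.

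With approximate unitary equivalence on both sides in hand I run the standard Elliott approximate intertwining: pick an increasing family of finite subsets $\mathcal F_n\subset A$ with dense union and a summable sequence $\ep_n>0$, and inductively choose unitaries $u_n,v_n\in\mathrm U(A)$ so that the inner perturbations $\phi_n=\mathrm{Ad}(u_n)\circ\phi$ and $\psi_n=\mathrm{Ad}(v_n)\circ\psi$ are Cauchy on $\bigcup_n\mathcal F_n$ and so that $\psi_{n+1}\circ\phi_n$ and $\phi_{n+1}\circ\psi_n$ are within $\ep_n$ of the identity on $\mathcal F_n$. The pointwise limits $\alpha=\lim_n\phi_n$ and $\beta=\lim_n\psi_n$ are $*$-homomorphisms satisfying $\beta\circ\alpha=\mathrm{id}_A=\alpha\circ\beta$, where the verification that $\alpha$ is genuinely a bijection rather than merely an isometric embedding is precisely the content of Lemma \ref{surj} applied to the sequence of invertible isometries produced by the intertwining. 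Since $KK$-classes are preserved by inner perturbation and by pointwise limits of $*$-homomorphisms, $[\alpha]=[\phi]=\kappa$, as required.

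The principal obstacle is the uniqueness step that promotes equality of the $KK$-class and of the induced tracial map into approximate unitary equivalence; this is where the full force of the hypotheses---amenability, simplicity, tracial rank zero, and the UCT---enters. Once that is granted, the rest is routine Elliott bookkeeping combined with Lemma \ref{surj}.
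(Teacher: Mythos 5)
There is a genuine gap, and it sits exactly at the point the whole paper is designed to address. Your construction produces $\alpha$ as a pointwise limit of unitary perturbations $\mathrm{Ad}(u_n)\circ\phi$ coming from an approximate (not asymptotic) intertwining, and you then assert that ``$KK$-classes are preserved by inner perturbation and by pointwise limits of $*$-homomorphisms, so $[\alpha]=[\phi]=\kappa$.'' That assertion is false: pointwise limits of unitary conjugates of $\phi$ preserve only the $KL$-class, while the $KK$-class can shift by an arbitrary element of $\mathrm{Pext}(K_*(A),K_{*+1}(A))$. This is precisely what Lemma \ref{decomp} and Theorem \ref{kk-lifting} quantify --- the limit $\lim_n\mathrm{Ad}(w_n)$ agrees with the inclusion in $KL$ (see (\ref{decom-add1})), yet \emph{every} pair of pure extensions, i.e.\ every Pext discrepancy in $KK$, can be realized by a suitable choice of the $u_n$. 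The uniqueness theorem you invoke (agreement in $KK$, hence $KL$, plus traces) only yields approximate unitary equivalence, so nothing in your intertwining pins down the Pext component of $[\alpha]$; to upgrade to asymptotic unitary equivalence (which would preserve $KK$) you would additionally need the rotation map to vanish, as in Theorem \ref{Lr0}, which you never address. A secondary point: Lemma \ref{surj} requires the approximants to be invertible isometries, whereas your $\phi_n=\mathrm{Ad}(u_n)\circ\phi$ are merely monomorphisms; the two-sided intertwining gives $\beta\circ\alpha=\alpha\circ\beta=\mathrm{id}$ directly, so this is a misattribution rather than a fatal flaw, but it signals that Lemma \ref{surj} is being used outside its hypotheses.

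The paper's own proof is structured to avoid exactly this trap. It first produces (via Theorem \ref{Tkk}/Lemma \ref{KL} and an Elliott intertwining based on the $KL$-level uniqueness theorem of \cite{LinTAF2}) an \emph{isomorphism} $\alpha_1$ that is only claimed to satisfy $\overline{[\alpha_1]}=\overline{\kappa}$ in $KL(A,A)$; the remaining discrepancy $\kappa-[\alpha_1]$ lies in $\mathrm{Pext}$ and is then realized by $\alpha_2\in\overline{\mathrm{Inn}}(A,A)$ from Theorem \ref{kk-lifting}, where Lemma \ref{surj} legitimately applies to the invertible maps $\mathrm{Ad}(u_1\cdots u_n)$ (with the finite sets $\mathcal F_n$ chosen dense in $A$) to show $\alpha_2$ is an automorphism, so that $\alpha=\alpha_2\circ\alpha_1$ is an automorphism with $[\alpha]=\kappa$ in $KK(A,A)$. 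Your argument would recover only the $KL$-statement; to get the full $KK$-statement you must incorporate a Pext-correcting step of the kind Theorem \ref{kk-lifting} provides.
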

\begin{proof}
By Theorem \ref{Tkk} and its remark , there is a monomorphism
$\alpha=\alpha_2\circ\alpha_1$ such that $[\alpha]=\kappa$ in $KK(A,
A)$. Let us show that $\alpha$ can be chosen to be an automorphism.
Let us first show that $\alpha_1$ can be chosen to be an
automorphism. We first choose a unital monomorphism $\alpha_1':A\to
A$ so that $[\alpha']=\kappa$ by \ref{Tkk} and its remark.

 By the UCT, there is $\kappa_1\in KK_e^{-1}(A,A)^{++}$ such that
$\kappa_1\times \kappa=\kappa\times \kappa_1=[{\rrm{ id}}_A].$ So
there is a unital monomorphism $\beta: A\to A$ such that
$[\beta]=\kappa_1.$ Then $[\beta\circ \alpha']=[\alpha'\circ
\beta]=[{\rrm{id}}_A].$ By the uniqueness theorem (2.3 of
\cite{LinTAF2}), both $\beta\circ \alpha'$ and $\alpha'\circ \beta$
are approximately unitarily equivalent to the identity.  Then, by a
standard intertwining argument of Elliott, one obtains an
isomorphism $\alpha_1: A\to A$ which is approximately unitarily
equivalent to $\alpha'$ (see for example Theorem 3.6 of
\cite{LinTAF2}).  So in particular,
$\overline{[\alpha_1]}=\overline{\kappa}$ in $KL(A,A).$ (This also
follows from the proof of Theorem 3.7 of \cite{LinTAF2} that there
is an isomorphism $\alpha_1: A\to A$ such that
$[\alpha_1]=\bar{\kappa}$ in $KL(A, A),$ using that fact that $A$ is
pre-classifiable in the sense of \cite{LinTAF2}, see Theorem 4.2 of
\cite{Lnduke}). .

Consider the map $\alpha_2$.
Note that if $A=B$, then, in the proof of Theorem \ref{kk-lifting},
the union of finite subsets $\mathcal{F}_n$ is dense in $A$, and the inner
automorphisms $\{\textrm{Ad}(u_1\cdots u_n)\}$ satisfy Lemma \ref{surj}.
Therefore, by Lemma \ref{surj}, the monomorphism
$\alpha_2=\displaystyle{\lim_{n\to\infty}}\textrm{Ad}(u_1\cdots u_n)$ is an automorphism of $A$.

Therefore, the map $\alpha=\alpha_2\circ\alpha_1$ is an automorphism of $A$.
\end{proof}

\section{Rotation maps}\label{rotation}

\begin{lem}\label{approx}
Let $H$ be a finitely generated abelian group, and let $A$ be a
C*-algebra with $\rho_A(\Kzero(A))$ dense in
$\mathrm{Aff}(\mathrm{T}(A))$. Let $\psi\in\mathrm{Hom}(H,
\mathrm{Aff}(\mathrm{T}(A)))$. Fix $\{g_1,  ... , g_n\}\subseteq
H$. Then, for any $\ep>0$, there is a homomorphism
$h:H\to\Kzero(A)$ such that
$$\abs{\psi(g_i)-\rho_A(h(g_i))}<\ep$$ for any $1\leq i\leq n$.
\end{lem}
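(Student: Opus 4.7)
The plan is to reduce to the free part of $H$ and then directly approximate on a finite set of generators using the density hypothesis.

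First I would decompose $H$ by the structure theorem for finitely generated abelian groups: write $H = \Z^k \oplus T$ where $T$ is a finite torsion subgroup, with free generators $e_1, \ldots, e_k$ and torsion part $T$. The key observation is that $\mathrm{Aff}(\mathrm{T}(A))$ is a real Banach space and in particular torsion-free as an abelian group; consequently any homomorphism $\psi \colon H \to \mathrm{Aff}(\mathrm{T}(A))$ must vanish on $T$, so $\psi$ is determined by its values $\psi(e_1), \ldots, \psi(e_k) \in \mathrm{Aff}(\mathrm{T}(A))$.

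Next I would prepare the constants. Write each $g_i = \sum_{j=1}^k m_{ij} e_j + t_i$ with $m_{ij} \in \Z$ and $t_i \in T$, and set
\[
M \;=\; \max_{1 \le i \le n} \sum_{j=1}^k |m_{ij}| \;+\; 1 .
\]
Using the hypothesis that $\rho_A(\Kzero(A))$ is dense in $\mathrm{Aff}(\mathrm{T}(A))$, for each $j = 1, \ldots, k$ choose $x_j \in \Kzero(A)$ such that
\[
\bigl\| \psi(e_j) - \rho_A(x_j) \bigr\| < \frac{\ep}{M},
\]
where $\|\cdot\|$ denotes the sup-norm on $\mathrm{Aff}(\mathrm{T}(A))$.

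Finally I would define $h \colon H \to \Kzero(A)$ by $h(e_j) = x_j$ for $1 \le j \le k$ and $h|_T = 0$; this is a well-defined homomorphism. For each $g_i$ we compute
\[
\psi(g_i) - \rho_A(h(g_i)) \;=\; \sum_{j=1}^k m_{ij}\bigl(\psi(e_j) - \rho_A(x_j)\bigr),
\]
using that $\psi(t_i) = 0$ and $\rho_A(h(t_i)) = 0$. Taking norms gives
\[
\bigl\| \psi(g_i) - \rho_A(h(g_i)) \bigr\| \;\le\; \sum_{j=1}^k |m_{ij}| \cdot \frac{\ep}{M} \;<\; \ep,
\]
as desired. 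There is no serious obstacle here; the only point worth flagging is the torsion-vanishing observation, which is what lets us reduce to finitely many values to approximate and invoke density only finitely many times.
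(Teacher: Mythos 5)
Your argument is correct and is essentially the paper's own proof: the paper likewise notes that $\psi$ factors through $H/H_{\mathrm{Tor}}$, reduces to $H=\bigoplus^k\Z$, and approximates the standard generators using the density of $\rho_A(\Kzero(A))$. If anything, your explicit $\ep/M$ bookkeeping for the coefficients $m_{ij}$ of the $g_i$ is slightly more careful than the paper, which approximates the basis elements within $\ep$ and leaves the (routine) rescaling to the reader.
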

\begin{proof}
Since the map $\psi$ factors through $H/H_{\mathrm{Tor}}$, we may
assume that $H=\bigoplus^k\Int$ for some $k$ and prove the lemma
for an $\{g_i\}$ replaced by the standard base $\{e_i;\
i=1, ... ,k\}$. Since $\rho_A(\Kzero(A))$ is dense in
$\mathrm{Aff}(\mathrm{T}(A))$, there exist $a_1,  ... ,
a_n\in\Kzero(A)$ such that
$$\abs{\psi(e_i)-\rho_A(a_i)}<\ep,\quad i=1, ... ,k.$$ The the
map $h: H\to\Kzero(A)$ defined by
$$e_i\mapsto a_i$$ satisfies the lemma.
\end{proof}

\begin{thm}\label{rotation-maps}
Let $A$ be a unital simple \CA\ with $\rho_A(\Kzero(A))$ dense in $\mathrm{Aff}(\mathrm{T}(A))$. Assume that there is a positive element $b\in A$ with $\mathrm{sp}(b)=[0, 1]$. Let $B\subseteq A$ be a unital  C*-subalgebra and denote by $\iota$ the inclusion map. Suppose that $A$ has Property $\mathrm{(B1)}$ and Property $\mathrm{(B2)}$ associated with $B$ and $\Delta_B$.
For any $\psi\in\mathrm{Hom}(\Kone(B), \mathrm{Aff}(\mathrm{T}(A)))$, there exists $\alpha\in\overline{\mathrm{Inn}}(B, A)$ such that there are maps $\theta_i:{K}_i(B)\to{K}_i(M_\alpha)$ with $\pi_0\circ\theta_i=\mathrm{id}_{{K}_i(B)}$, $i=0, 1$, and  the rotation map $R_{\iota, \alpha}: \Kone(M_\alpha)\to\mathrm{Aff}(\mathrm{T}(A))$ is given by
$$R_{\iota, \alpha}(c)=\rho_A(c-\theta_1([\pi_0]_1(c)))+\psi([\pi_0]_1(c)),\quad \forall c\in\Kone(M_\alpha(A)).$$ In other words, $$[\alpha]=[\iota]$$ in ${KK}(B, A)$, and the rotation map $R_{\iota, \alpha}: \Kone(M_\alpha)\to\mathrm{Aff}(\mathrm{T}(A))$ is given by
$$R_{\iota, \alpha}(a, b)=\rho_A(a)+\psi(b)$$ for some identification of $\Kone(M_\alpha)$ with $\Kzero(A)\oplus\Kone(B)$.
\end{thm}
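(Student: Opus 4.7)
The plan is to construct $\alpha=\lim_{n\to\infty}\mathrm{Ad}(w_n)$ with $w_n=u_1\cdots u_n$, paralleling the scheme of Theorem \ref{kk-lifting}, but with the unitaries $u_{n+1}$ chosen via Property $\mathrm{(B2)}$ so as to simultaneously (i) make both extensions $\eta_i(M_\alpha)$ trivial, yielding $[\alpha]=[\iota]$ in $KK(B,A)$, and (ii) imprint a telescoping pattern of Bott elements that, via Lemma \ref{trace-preserve}, accumulates to the prescribed rotation $\psi$ on $K_1(B)$.

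The first ingredient is an approximation of $\psi$ by $\rho_A$-images. Write $K_1(B)=\bigcup_n H_n$ with each $H_n$ finitely generated and containing the generators $h_1,\ldots,h_n$. Since $\rho_A(K_0(A))$ is dense in $\mathrm{Aff}(\mathrm{T}(A))$, Lemma \ref{approx} supplies homomorphisms $\mu_n:H_n\to K_0(A)$ with $\sup_{\tau\in\mathrm{T}(A)}|\rho_A(\mu_n(h_i))(\tau)-\psi(h_i)(\tau)|<1/2^n$ for $1\leq i\leq n$; set $\nu_n:=\mu_{n+1}|_{H_n}-\mu_n$, a homomorphism $H_n\to K_0(A)$ with $|\tau(\nu_n(h_i))|$ controlled by $1/2^{n-1}$ uniformly in $\tau$.

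Next, I would carry out the inductive step of \ref{kk-lifting}: at stage $n$, invoke Property $\mathrm{(B2)}$ for the embedding $\mathrm{Ad}(w_n)\circ\iota$ with Bott data $b_0=0$ on $K_n$-generators and $b_1=\nu_n$ on $H_n$-generators to produce $u_{n+1}\in\mathrm{U}(A)$ with the small-commutator estimate $\|[u_{n+1},a]\|<\delta^{\mathfrak{p}}/(r_n^2\cdot 2^{n+1})$ on $\mathrm{M}_{r_n}(w_n^*\F_nw_n)$ and $\mathrm{bott}_0(w_n^*p_iw_n,u_{n+1})=0$, $\mathrm{bott}_1(w_n^*z_iw_n,u_{n+1})=\nu_n(h_i)$; the trace bound demanded by Property $\mathrm{(B2)}$ is met by the previous step. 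By Lemma \ref{decomp}, $\alpha:=\lim\mathrm{Ad}(w_n)$ lies in $\overline{\mathrm{Inn}}(B,A)$, and $M_\alpha$ has its $K$-groups computed by the inductive limits there with $\gamma_n^0=0$ and $\Lambda\circ\gamma_n^1=\nu_n$. Because the $\nu_n$ telescope, the partial splittings $\theta'_n(h)=(\mu_n(h),h)\in K_0(A)\oplus H_n$ are compatible across the connecting maps of the inductive system, assembling into a global splitting of $\eta_1(M_\alpha)$; together with $\gamma_n^0=0$ this trivializes both extensions and hence $[\alpha]=[\iota]$ in $KK(B,A)$.

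Finally, I would compute the rotation. By Lemma \ref{DrL}, $R_{\iota,\alpha}\circ\imath_1=\rho_A$ on $K_0(A)$. For $h_i\in H_n$ and $n$ past the index of $h_i$, $\theta'(h_i)$ is represented by $[V_i^{(n)}]+\imath_1(\mu_n(h_i))$ in the notation of Lemma \ref{decomp}. The continuous-path portion of $V_i^{(n)}$ concatenates approximate conjugations by $u_{n+k+1}$ ($k\geq 0$), and by Lemma \ref{trace-preserve} each contributes $\tau(\mathrm{bott}_1(w_{n+k}^*z_iw_{n+k},u_{n+k+1}))=\tau(\nu_{n+k}(h_i))$ to the rotation integral. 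Summing the telescope,
$$
R_{\iota,\alpha}(\theta'(h_i))(\tau)=\sum_{k\geq 0}\tau(\nu_{n+k}(h_i))+\rho_A(\mu_n(h_i))(\tau)=\psi(h_i)(\tau)
$$
exactly, so $R_{\iota,\alpha}(a,b)=\rho_A(a)+\psi(b)$ under the identification $K_1(M_\alpha)=K_0(A)\oplus K_1(B)$ via $(\imath_1,\theta')$. The hardest part will be this last calculation: verifying that the rotation integral along $V_i^{(n)}$ reduces to the telescoping sum of Bott contributions (with the conjugation portion contributing $0$ by cyclicity of the trace), and checking that the adjusted splitting $\theta'$ produces \emph{exact} equality $R_{\iota,\alpha}\circ\theta'=\psi$ despite $\rho_A$ having only dense image in $\mathrm{Aff}(\mathrm{T}(A))$.
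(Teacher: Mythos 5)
Your construction follows the paper's own route (Lemma \ref{approx} plus a telescoping sequence of $\Kzero(A)$-approximants, Property (B2) to realize the differences as Bott data, Lemma \ref{decomp} to read off the extensions, and Lemma \ref{trace-preserve} to sum the rotation), but there is a genuine gap at the central quantitative step. Property (B2) does not realize arbitrary Bott data: it only produces a unitary when the prescribed homomorphism $b_1$ is defined on the finitely generated subgroup $G_1\supseteq {\cal P}_1$ that (B2) itself hands you (not merely on $H_n$; see Remark \ref{B2}), and when $\abs{\tau\circ b_1(g)}<\Delta_B\bigl(\delta^{\mathfrak{p}}/(r_n^2\cdot 2^{n+1}),\F_n,{\cal P}_0^{(n)},{\cal P}_1^{(n)},\mathrm{Ad}(w_n)\circ\iota\bigr)$ on its generators. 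You fixed the accuracy of $\mu_n$ in advance as $1/2^n$, so $\abs{\tau(\nu_n(h_i))}\lesssim 1/2^{n-1}$, and then asserted that ``the trace bound demanded by Property (B2) is met by the previous step.'' That is unjustified: $\Delta_B(\cdot)$ is not under your control and may be far smaller than $1/2^{n-1}$, and the bound is not cosmetic --- Bott classes of unitaries with small commutators necessarily have small traces, so without it no such $u_{n+1}$ need exist. The same smallness (below the constant $\delta$ of Lemma \ref{commutator}) is also what places $\nu_n(h_i)$ in $C(A)$, which you need in order to identify the cross maps $\gamma_n^1=[\p(R^*(u_{n+1},t),w_n^*z_iw_n)]$ of Lemma \ref{decomp} with your prescribed data through the injective map $\Lambda$; with only $1/2^{n-1}$ you cannot invoke Lemma \ref{commutator} either.

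The repair is to reverse the order of choices, as the paper does: at stage $n$ first form $\delta_n=\Delta_B(\cdot)$ and the generating set ${\cal G}_1^n$ for the current data, then use Lemma \ref{approx} to choose $h_n$ on the subgroup generated by $\{g_1,\dots,g_n\}\cup{\cal G}_1^n$ (and, at the next stage, also ${\cal G}_1^{n+1}$) within $\delta_n\delta/2^n$ of $\psi$, so the telescoping differences $\phi_n=h_{n+1}|_{H_n}-h_n$ automatically satisfy both the (B2) bound and the Lemma \ref{commutator} bound on all required generators. Since $\delta_n$ and $G_1$ a priori depend on the embedding $\mathrm{Ad}(w_{n-1})\circ\iota$, i.e.\ on unitaries already constructed, one must also invoke Remark \ref{B2sym} (the data of (B2) can be taken to depend only on the unitary conjugacy class of the embedding, hence only on $\iota$) to avoid a circularity between choosing $h_{n+1}$ and constructing $u_n$. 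Once this bookkeeping is in place, the remainder of your argument --- triviality of both extensions via the compatible splittings $\theta_n'(h)=(h_n(h),h)$, and the rotation computation in which the integral along the path reduces to $\sum_k\tau(\mathrm{bott}_1(\cdot,u_{n+k}))=\sum_k\tau(\phi_{n+k}(h_i))$ and telescopes exactly to $\psi(h_i)-\rho_A(h_n(h_i))$ --- is precisely the paper's computation and is correct.
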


\begin{proof}
Write $$\Kone(B)=\{g_1, g_2,  ... , g_n,  ... \},$$
$$\Kzero(B)_+=\{k_1, k_2,  ... , k_n,  ... \},$$ and denote by
$H_n=<g_1,  ... , g_n>,$ { the subgroup generated by
$\{g_1, ... ,g_n\}$}, and $K_n=<k_1, ... , k_n>,$ {
the subgroup generated by $\{k_1, ... ,k_n\}$}. Let $\{\F_i\}$
be an increasing family of finite subsets with the union dense in
$B$. Assume that for each $i$, there is a unitary $z_i$ and a
projection $p_i$ in $\mathrm{M}_{r_i}(\F_i)$ for some
natural number $r_i$ such that $[z_i]_1=g_i$ and $[p_i]=k_i$.
Without loss of generality, we may assume that $r_i\le r_{i+1},$
$i\in\mathbb N.$  In what follows, if $v\in A,$ by $v^{(m)},$ we mean
$v^{(m)}={\rrm{diag}}(\overbrace{v,v, ... ,v}^m).$

We assert that there are maps $\{h_i: H_i\to\Kzero(A);\ i\in\mathbb N\}$ and unitaries $\{u_i;\ i\in\mathbb N\}$ such that for any $n$, if denoted by $w_n=u_1\cdots u_{n-1}$ (assume $u_{-1}=u_0=1$), one has
\begin{enumerate}
\item For any $x\in\{g_1,  ... , g_i\}\cup\mathcal G^n_1$,
$$\abs{\rho_A\circ h_n(x)-\psi(x)}<\frac{\delta_n\delta}{2^{n}}$$ where $$\delta_n=\Delta_B(\frac{\delta^{\mathfrak{p}}}{r_n^2\cdot 2^{n+1}}, {\mathcal F_n}, {\mathcal P_n^{(0)}}, {\mathcal P_n^{(1)}}, \mathrm{Ad}(w_{n-1})\circ\iota)$$ for $\mathcal P_n^{(0)}=\{k_1, k_2, ... , k_n\}$ and $\mathcal P_n^{(1)}=\{g_1, g_2, ... , g_n\}$, $\delta$ the constant of Lemma \ref{commutator} (since $A$ has Property (B1)), and $\mathcal G^n_1$ is the set of generators of $G_1$ of Definition \ref{Pbott} with respect to $\frac{\delta^{\mathfrak{p}}}{r_n^2\cdot 2^{n+1}}, {\mathcal F_n}, {\mathcal P_n^{(0)}}, {\mathcal P_n^{(1)}},$ and  $\iota)$.

\item For any $a\in w_{n-1}^*{\mathcal F_n}w_{n-1}$,
$$\norm{[u_{n}, a]}\leq\frac{\delta^{\mathfrak{p}}}{r_n^2\cdot2^{n+1}}$$ where $w_n=u_1 ...  u_n$, and moreover, the image of $\phi_n:=h_{n+1}|_{H_n}-h_n$ lies inside $C(A)$ so that $\Lambda\circ\phi_n$ is well-defined, and $$\mathrm{bott}_1((w^{(r_{n})}_{n-1})^*z_iw^{(r_{n})}_{n-1}, u_{n})=\Lambda\circ\phi_{n}(g_i)\quad{\rrm{and}}\quad \mathrm{bott}_0((w^{(r_{n})}_{n-1})^*p_iw^{(r_{n})}_{n-1}, u_{n})=0$$ for any $i=1,  ... , n$.
\end{enumerate}

If $n=1$, since $\rho_A(\Kzero(A))$  is dense in $\mathrm{Aff}(\mathrm{T}(A))$, by Lemma \ref{approx}, there is a map $h_1: <\{g_1\}\cup\mathcal G_1>\to\Kzero(A)$ such that for any $x\in \{g_1\}\cup\mathcal G_1$
$$
\abs{\psi(x)-\rho_A(h_1(x))}<\frac{\delta_1\delta}{2},
$$
and a map $h_2: <\{g_1, g_2\}\cup\mathcal G_2>\to\Kzero(A)$ such that
$$\abs{\psi(x)-\rho_A(h_2(x))}\leq\frac{\delta_1\delta}{2^2}$$ for any
$x\in \{g_1, g_2\}\cup\mathcal G_2$. For the function $$\phi_1=h_2|_{H_1}-h_1,$$ we have that $\abs{\tau(\phi_1(g_1))}<\min\{\delta, \delta_1\}$ for any $\tau\in\mathrm{T}(A)$ and hence
$\phi_1(g_1)\in C(A)$ by Lemma \ref{commutator}. By Lemma
\ref{trace-preserve},
$$\abs{\tau(\Lambda\circ\phi_1(x))}=\abs{\tau(\phi_1(x))}<\delta_1$$
for any $\tau\in\mathrm{T}(A)$ and any $x\in\mathcal G_1$. 
Since $A$ has Property (B2) associated with $B$
and $\Delta_B$, there is a unitary $u_1\in \mathrm{U}(A)$ such
that
$$\norm{[u_1, a]}\leq\frac{\delta^{\mathfrak{p}}}{r_1^2\cdot
2^2},\quad\forall a\in\F_1,$$ and
$$
\mathrm{bott}_1(z_1, u_1)={{\Lambda}}\circ\phi_1(g_1)\quad\textrm{and}\quad \mathrm{bott}_0(p_1, u_1)=0.
$$

Assume that we have constructed the maps $\{h_i: H_i\to\Kzero(A);\ i=1, ... , n\}$ and unitaries $\{u_i;\ i=1,  ... , n-1\}$ satisfying the assertion above. By by Lemma \ref{approx},  there is a function $h_{n+1}:<\{g_1,...,g_{n+1}\}\cup\mathcal G_1^n\cup\mathcal G_1^{n+1}>\to\Kzero(A)$ such that for any $x\in\{g_1,  ... , g_{n+1}\}\cup\mathcal G_1^n\cup\mathcal G_1^{n+1}$, $$\abs{\rho_A\circ h_{n+1}(x)-\psi(x)}<\frac{\delta_{n+1}\delta}{2^{n+1}}$$ where $$\delta_{n+1}=\Delta_B(\frac{{{\delta^{\mathfrak{p}}}}}{r_n^2\cdot 2^{n+2}}, {\mathcal F_{n+1}}, {\mathcal P_{n+1}^{(0)}}, {\mathcal P_{n+1}^{(1)}}, \mathrm{Ad}(w_{n})\circ\iota)$$ for $\mathcal P_{n+1}^{(0)}=\{k_1, k_2, ... , k_{n+1}\}$ and $\mathcal P_{n+1}^{(1)}=\{g_1, g_2, ... , g_{n+1}\}$. 

Recall that $\phi_n=h_{n+1}|_{H_n}-h_n$. Then, a direct calculation shows that for any $\tau\in\mathrm{T}(A)$, $$\abs{\tau(\Lambda\circ\phi_{n})(x)}=\abs{\tau(\phi_n(x))}<\min\{\delta, \delta_{n+1}\}$$ for any $x\in\mathcal G_1^n$. Since $A$ Property (B2) associated with $B$ and $\Delta_B$,  there is a unitary $u_{n}\in \mathrm{U}(A)$ such that $$\norm{[u_{n}, w_{n-1}^*aw_{n-1}]}\leq\frac{\delta^{\mathfrak{p}}}{r_n^2\cdot 2^{n+1}},\quad\forall a\in\F_n,$$ and
$$
\mathrm{bott}_1(\mathrm{Ad}(w_{n-1})\circ\iota, u_{n})|_{{\cal P}_{n}^{(1)}}=
{{\Lambda}}\circ\phi_n\quad\textrm{and}\quad \mathrm{bott}_0(\mathrm{Ad}(w_{n-1})\circ\iota, u_{n})|_{{\cal P}_{n}^{(0)}}=0.
$$ This proves the assertion.

By Lemma \ref{decomp}, $\textrm{Ad}(w_n)$ converge to a monomorphism $\alpha: B\to A$. Moreover, the extension $\eta_0(M_{\alpha})$ is trivial, and $\eta_1(M_{\alpha})$ is determined by the inductive limit of
\vskip 3mm
\xy
(0,0)*{~}="0";
(20,15)*{0}="1";
(40,15)*{K_0(A)}="2";
(70,15)*{K_0(A)}="3";
(78,15)*{\oplus}="4";
(86,15)*{H_{n+1}}="5";
(110,15)*{H_{n+1}}="6";
(130, 15)*{0}="7";
(20,0)*{0}="1l";
(40,0)*{K_0(A)}="2l";
(70,0)*{K_0(A)}="3l";
(78,0)*{\oplus}="4l";
(86,0)*{H_{n}}="5l";
(110,0)*{H_{n}}="6l";
(130, 0)*{0}="7l";
{\ar "1";"2"};
{\ar "2";"3"};
{\ar "5";"6"};
{\ar "6";"7"};
{\ar "1l";"2l"};
{\ar "2l";"3l"};
{\ar "5l";"6l"};
{\ar "6l";"7l"};
{\ar@{=} "2l";"2"};
{\ar@{=} "3l";"3"};
{\ar_{\iota_{n, n+1}} "5l";"5"};
{\ar_{\iota_{n, n+1}} "6l";"6"};
{\ar_{{\gamma}_n} "5l";"3"};
\endxy
\vskip 3mm
%
\noindent where $\gamma_n(g_i)=[\p(R^*(u_{n+1}, t), w_n^*z_iw_n)]$. However, since $$\Lambda\circ\gamma_n(g_i)=\mathrm{bott}_1(\textrm{Ad}(w_{n-1})\circ\iota, u_{n})(g_i)=\Lambda\circ\phi_n(g_i),$$ by the injectivity of $\Gamma$, we have that $\gamma_n=\phi_n$.

We assert that $\eta_1(M_\alpha)$ is also trivial. For any $n$, define a map $\theta'_n:H_{n}\to\Kone(M_{\alpha})$ by $$\theta'_n(g)=(h_{n}(g), g)).$$ We then have
\begin{eqnarray*}
&&\theta'_{n+1}\circ\iota_{n, n+1}(g)-\theta'_n(g)\\
&=&(h_{n+1}\circ\iota_{n, n+1}(g), \iota_{n, n+1}(g))-(h_n(g)+\phi_n(g), \iota_{n, n+1}(g))\\
&=&(h_{n+1}\circ\iota_{n, n+1}(g), \iota_{n, n+1}(g))-(h_n(g)+(h_{n+1}(\iota_{n, n+1}(g))-h_n(g)), \iota_{n, n+1}(g))\\
&=&0,
\end{eqnarray*}
and hence $(\theta'_n)$ define a homomorphism $\theta_1:\Kone(B) \to K_1(M_\alpha)$. Moreover, since $\pi\circ\theta_1=\mathrm{id}_{K_1(B)}$, the extension $\eta_1(M_\alpha)$ splits. Therefore, $[\alpha]=[\iota]$ in ${KK}(B, A)$.

Let us calculate the corresponding rotation map.  Pick $[z_i]=g_i$ for some $g_i\in H_{m}$.
To simplify notation, without loss of generality, we will use $\alpha$ for $\alpha\otimes {\rrm{id}}_{\mathrm{M}_{r}(\Comp)}$
on $B\otimes \mathrm{M}_r(\Comp)$ for any integer $r\ge 1.$
Take a path $v(t)$ in $\mathrm{M}_{2r_i}(A)$ from $z_i$ to $\alpha(z_i)$ as follows
\begin{displaymath}
v(t)=R^*(2t)
\left(
\begin{array}{cc}
1& 0\\
0 & (w_m^{(r(i))})^*
\end{array}
\right)
R(2t)
\left(
\begin{array}{cc}
z_i& 0\\
0 & 1
\end{array}
\right)
R^*(2t)
\left(
\begin{array}{cc}
1& 0\\
0 & w_m^{(r(i))}
\end{array}
\right)
R(2t)
\end{displaymath}
for $t\in [0,1/2]$ and
$$
v(t) =(w_m^{(r(i))})^*z_iw_m^{(r(i))}\exp((2t-1)c_m)
$$
for $t\in [1/2,1],$ where
\begin{displaymath}
R(t)=\left(
\begin{array}{cc}
\cos(\frac{\pi t}{2})& \sin(\frac{\pi t}{2} )\\
-\sin(\frac{\pi t}{2}) & \cos(\frac{\pi t}{2})
\end{array}
\right)
\end{displaymath}
and $c_m=\log((w_m^{(r(i))})^*z^*_iw_m^{(r(i))}{\alpha}(z_i))$.

Then, for any $\tau\in\mathrm{T}(A)$, we have
\begin{eqnarray*}
\frac{1}{2\pi i}\int_0^1\tau(\dot{v}(t)v^*(t))\mathrm{d}t&=&\tau(\log((w_m^{(r(i))})^*z^*_iw_m^{(r(i))}{{\alpha}}(z_i)))\\
&=&\lim_{n\to\infty} \tau(\log((w_m^{(r(i))})^*z^*w_m^{(r(i))}{\rrm{Ad}}w_n^{(r(i))}(z_i)))\\
&=& \lim_{n\to\infty} \sum_{k=0}^n\tau(\log(((w_{m+k-1}^{(r(i))})^*z^*_iw_{m+k-1}^{(r(i))}) ((w_{m+k}^{(r(i))})^*z_iw_{m+k}^{(r(i))})))\\
&=&\sum_{k=0}^{\infty}\tau(\log(((w_{m+k-1}^{(r(i))})^*z^*_iw_{m+k-1}^{(r(i))}) ((w_{m+k}^{(r(i))})^*z_iw_{m+k}^{(r(i))})))\\
&=&\sum_{k=0}^{\infty}\tau(\log(((w_{m+k-1}^{(r(i))})^*z^*_iw_{m+k-1}^{(r(i))})(u_{m+k}^{(r(i))} ((w_{m+k-1}^{(r(i))})^*z_iw_{m+k-1}^{(r(i))})u_{m+k}^{(r(i))}))\\
&=&\sum_{k=0}^{\infty}\tau(\mathrm{bott}_1((w_{m+k-1}^{(r(i))})^*z^*_iw_{m+k-1}^{(r(i))}, u_{m+k})).
\end{eqnarray*}
Then, by the choice of $\{u_n\}$, we have
\begin{eqnarray*}
\frac{1}{2\pi i}\int_0^1\tau(\dot{v}(t)v^*(t))\mathrm{d}t&=&\sum_{k=0}^{\infty}\tau(\Lambda\circ\phi_{m+k}(g_i))\\
&=&\sum_{k=0}^{\infty}\tau(\phi_{m+k}(g_i))\\
&=&\sum_{k=0}^{\infty}\tau(h_{m+k+1}(g_i)-h_{m+k}(g_i))\\
&=&\psi(g_i)-\tau(h_{m}(g_i)).
\end{eqnarray*}
Thus,
\begin{align*}
R_{\alpha_1}([v])&=\psi(g_i)-\rho_A\circ h_m(g_i)\\
&=\psi([\pi_0(v)])+\rho_A(-h_m(g_i))\\
&=\psi([\pi_0(v)])+\rho_A((0, g_i)-\theta'_m(g_i))\\
&=\psi([\pi_0(v)])+\rho_A([v]-\theta_1([\pi_0(v)])),
\end{align*}
as desired. 
\end{proof}

\begin{cor}
Let $A$ be a unital simple \CA\ with $\mathrm{TR}(A)=0$ and let $B\subseteq A$ be an AH-algebra and denote by $\iota$ the inclusion map. For any $\psi\in\mathrm{Hom}(\Kone(B), \mathrm{Aff}(\mathrm{T}(A)))$, there exists $\alpha\in\overline{\mathrm{Inn}}(B, A)$ such that there are maps $\theta_i:{K}_i(B)\to{K}_i(M_\alpha)$ with $\pi_0\circ\theta_i=\mathrm{id}_{{K}_i(B)}$, $i=0, 1$, and  the rotation map $R_{\iota, \alpha}: \Kone(M_\alpha)\to\mathrm{Aff}(\mathrm{T}(A))$ is given by
$$R_{\iota, \alpha}(c)=\rho_A(c-\theta_1([\pi_0]_1(c)))+\psi([\pi_0]_1(c)),\quad \forall c\in\Kone(M_\alpha(A)).$$
In other words, $$[\alpha]=[\iota]$$ in ${{KK}}(B, A)$ and the rotation map $R_{\iota,\alpha}: \Kone(M_\alpha)\to\mathrm{Aff}(\mathrm{T}(A))$ is given by
$$R_{\iota, \alpha}(a, b)=\rho_A(a)+\psi(b)$$ for some identification of $\Kone(M_\alpha)$ with $\Kzero(A)\oplus\Kone(B)$.
\end{cor}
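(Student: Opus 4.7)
The plan is to deduce this corollary directly from Theorem \ref{rotation-maps} by verifying its three hypotheses in the present setting, namely: (i) $\rho_A(\Kzero(A))$ is dense in $\mathrm{Aff}(\mathrm{T}(A))$; (ii) $A$ contains a positive element with spectrum $[0,1]$; and (iii) $A$ has Property (B1) and Property (B2) associated with $B$ and some $\Delta_B$. Once these are in place, the conclusion is literally that of Theorem \ref{rotation-maps}, with the same identification of $\Kone(M_\alpha)\cong \Kzero(A)\oplus \Kone(B)$ coming from the splitting of $\eta_1(M_\alpha)$ produced in the construction.

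For (i), since $\mathrm{TR}(A)=0$, $A$ is a unital simple C*-algebra of real rank zero, stable rank one, with weakly unperforated $\Kzero(A)$, so by the standard consequence of real rank zero (and simplicity) the image $\rho_A(\Kzero(A))$ is dense in $\mathrm{Aff}(\mathrm{T}(A))$. For (ii), since $A$ is a unital simple infinite-dimensional C*-algebra (it contains the infinite-dimensional AH-subalgebra $B$), it admits a self-adjoint element whose spectrum is infinite, and then by continuous functional calculus one obtains $b\in A_+$ with $\mathrm{sp}(b)=[0,1]$. For (iii), Property (B1) for $A$ is Lemma 5.2 of \cite{Lin-Asy}, as already recorded in the proof of Corollary \ref{Ccomm}; Property (B2) associated with the AH-algebra $B$ and a suitable function $\Delta_B$ is exactly the content of Proposition \ref{Lbot}, applied to the unital monomorphism $\iota:B\hookrightarrow A$.

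Having verified (i)--(iii), Theorem \ref{rotation-maps} produces the desired $\alpha\in\overline{\mathrm{Inn}}(B,A)$ together with partial splittings $\theta_i:\Kzero(B),\Kone(B)\to {K}_i(M_\alpha)$ of $[\pi_0]_i$, the vanishing of the two extensions $\eta_i(M_\alpha)$ giving $[\alpha]=[\iota]$ in $KK(B,A)$, and the prescribed formula for $R_{\iota,\alpha}$. The identification $\Kone(M_\alpha)=\Kzero(A)\oplus\Kone(B)$ is induced by $\theta_1$ together with the inclusion $\Kzero(A)=\Kone(\mathrm{S}(A))\hookrightarrow \Kone(M_\alpha)$, and in these coordinates the rotation map becomes $(a,b)\mapsto \rho_A(a)+\psi(b)$, as stated.

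There is no real obstacle here: the corollary is a straightforward specialization of Theorem \ref{rotation-maps}, and the only work consists in recording that the hypothesis ``$\mathrm{TR}(A)=0$ and $B$ is an AH-subalgebra'' implies the abstract hypotheses of that theorem. The nontrivial input is supplied by Lemma 5.2 of \cite{Lin-Asy} (for Property (B1)) and by Proposition \ref{Lbot} (for Property (B2) with respect to AH-algebras), both already established in the excerpt.
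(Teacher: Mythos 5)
Your proposal is correct and follows essentially the same route as the paper, whose entire proof of this corollary is the citation of Lemma 5.2 of \cite{Lin-Asy} (Property (B1)), Proposition \ref{Lbot} (Property (B2) for the inclusion $\iota$ of the AH-subalgebra $B$), and Theorem \ref{rotation-maps}; your added verifications of the density of $\rho_A(K_0(A))$ and of the element $b$ with $\mathrm{sp}(b)=[0,1]$ are exactly the routine facts the paper leaves implicit. One small caveat: the functional-calculus argument you give for producing $b$ does not work if the self-adjoint element has countable (though infinite) spectrum, and $B$ itself need not be infinite dimensional; the standard fix is to use simplicity and real rank zero of the infinite-dimensional algebra $A$ to build a commutative subalgebra isomorphic to $\mathrm{C}(X)$ with $X$ a Cantor set, which does contain such an element.
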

\begin{proof}
It follows directly from Lemma 5.2 of \cite{Lin-Asy}, Theorem \ref{rotation-maps}, and Proposition \ref{Lbot}.
\end{proof}

\begin{defn}\label{DR0}
Fix two unital \CA s $A$ and $B$ with
$\mathrm{T}(A)\not=\varnothing.$ Let ${\cal R}_0$ be the subset of
$\textrm{Hom}(K_1(B), \text{Aff}(\mathrm{T}(A)))$ consisting  of
those homomorphisms $h\in \textrm{Hom}(K_1(B),
{\text{Aff}}(\mathrm{T}(A)))$ such that there exists a homomorphism
$d: K_1(B)\to K_0(A)$ such that
$$
h=\rho_A\circ d.
$$
Then ${\cal R}_0$ is clearly a subgroup of $\textrm{Hom}(K_1(B),
\text{Aff}(\mathrm{T}(A))).$

\end{defn}

The following is a variation of Theorem 10.7 of \cite{Lin-Asy}.

\begin{thm}\label{Lr0}
Let $C$ be a unital AH-algebra and let $A$ be a unital separable simple \CA\,
with $\mathrm{TR}(A)=0.$
Suppose that $\phi,\, \psi:C \to A$ are two unital monomorphisms such that
$$[\phi]=[\psi]$$
in ${KK}(C,A)$ {and,} for all $\tau\in
\mathrm{T}(A),$
$$\tau\circ \phi=\tau\circ \psi.$$
Suppose also that there exists a homomorphism
$\theta\in\mathrm{Hom}(\Kone(C), \Kone(M_{\phi,
\psi}))$ with $(\pi_0)_{*1}\circ
\theta=\mathrm{id}_{\Kone(C)}$ such that
\beq\label{Lr0-1}
R_{\phi, \psi}\circ \theta\in {\cal R}_0.
\eneq
Then $\phi$ and $\psi$ are asymptotically unitarily equivalent.
\end{thm}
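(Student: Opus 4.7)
The plan is to reduce the problem to the vanishing-rotation case treated in Theorem 10.7 of \cite{Lin-Asy}. The hypothesis $R_{\phi,\psi}\circ\theta\in\mathcal{R}_0$ means, by Definition \ref{DR0}, that there exists a homomorphism $d:K_1(C)\to K_0(A)$ with
$$R_{\phi,\psi}\circ\theta=\rho_A\circ d.$$
My goal is to show one can simply replace $\theta$ by a different splitting whose composition with $R_{\phi,\psi}$ is zero, so that the earlier asymptotic unitary equivalence theorem applies verbatim.

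The concrete modification is $\theta':=\theta-\imath_1\circ d$, where $\imath_1:K_0(A)\to K_1(M_{\phi,\psi})$ is the map in the six-term exact sequence associated with the mapping torus (see Definition \ref{D-Maf}). First I would verify that $\theta'$ is still a splitting: by exactness of the six-term sequence, $(\pi_0)_{*1}\circ\imath_1=0$, so $(\pi_0)_{*1}\circ\theta'=(\pi_0)_{*1}\circ\theta=\mathrm{id}_{K_1(C)}$. Next, by Lemma \ref{DrL} the rotation map satisfies $R_{\phi,\psi}\circ\imath_1=\rho_A$, so
$$R_{\phi,\psi}\circ\theta'=R_{\phi,\psi}\circ\theta-\rho_A\circ d=\rho_A\circ d-\rho_A\circ d=0.$$

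Having arranged a splitting whose composition with the rotation map vanishes, and with the hypotheses $[\phi]=[\psi]$ in $KK(C,A)$ and $\tau\circ\phi=\tau\circ\psi$ for all $\tau\in\mathrm{T}(A)$ unchanged, Theorem 10.7 of \cite{Lin-Asy} applies with the splitting $\theta'$ in place of $\theta$ and yields that $\phi$ and $\psi$ are asymptotically unitarily equivalent. I do not anticipate any real obstacle here: the entire content of the argument is the recognition that $\mathcal{R}_0$ is exactly the subgroup of rotation values that can be absorbed by reselecting the splitting of $(\pi_0)_{*1}$, since the image $\imath_1(K_0(A))\subset K_1(M_{\phi,\psi})$ contributes precisely $\rho_A(K_0(A))$ to the rotation map while being killed by $(\pi_0)_{*1}$.
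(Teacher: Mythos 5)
Your first step is exactly the paper's first step, and it is correct: writing $R_{\phi,\psi}\circ\theta=\rho_A\circ d$ and replacing $\theta$ by $\theta'=\theta-\imath_1\circ d$ preserves the splitting property (since $(\pi_0)_{*1}\circ\imath_1=0$ by exactness) and kills the rotation map (since $R_{\phi,\psi}\circ\imath_1=\rho_A$ by Lemma \ref{DrL}). The gap is in your last sentence, where you claim Theorem 10.7 of \cite{Lin-Asy} now applies ``verbatim'' with $\theta'$. That theorem does not take as input an arbitrary group homomorphism $\theta':K_1(C)\to K_1(M_{\phi,\psi})$ splitting $(\pi_0)_{*1}$; its hypothesis is formulated at the level of total $K$-theory, i.e.\ it requires an element $\theta_1\in\mathrm{Hom}_{\Lambda}(\underline{K}(C),\underline{K}(M_{\phi,\psi}))$ (equivalently, via the UCT, a $KL$-element) with $[\pi_0]\circ\theta_1=[\mathrm{id}_{\underline{K}(C)}]$ whose restriction to $K_1(C)$ composed with $R_{\phi,\psi}$ vanishes. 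A splitting on $K_1$ alone, with no compatibility with the coefficient groups $K_i(\,\cdot\,,\Z/n\Z)$ and the Bockstein operations, is not what that theorem consumes --- and if it were, the present statement would be an immediate restatement of 10.7 rather than the ``variation'' the paper says requires proof.

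Bridging exactly this gap is where the bulk of the paper's argument lies, and it is the part missing from your proposal. Using $[\phi]=[\psi]$ in $KK(C,A)$ one gets some $\theta_0\in\mathrm{Hom}_{\Lambda}(\underline{K}(C),\underline{K}(M_{\phi,\psi}))$ with $[\pi_0]\circ\theta_0=[\mathrm{id}_{\underline{K}(C)}]$; one then forms the hybrid map equal to $\theta'$ on $K_1(C)$ and to $\theta_0$ on $K_0(C)$, lifts it by the UCT (surjectivity of $\Gamma:KL(C,M_{\phi,\psi})\to\mathrm{Hom}(K_*(C),K_*(M_{\phi,\psi}))$) to some $\theta_0''$, and finally corrects it by setting $x_0=[\pi_0]\circ\theta_0''-[\mathrm{id}_{\underline{K}(C)}]$ (which satisfies $\Gamma(x_0)=0$) and $\theta_1=\theta_0''-\theta_0\circ x_0$. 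Only after checking that $[\pi_0]\circ\theta_1=[\mathrm{id}_{\underline{K}(C)}]$ and $\Gamma(\theta_1)|_{K_1(C)}=\theta'$, so that $R_{\phi,\psi}\circ\theta_1|_{K_1(C)}=0$, can Theorem 10.7 of \cite{Lin-Asy} be invoked. So your reduction of the $\mathcal{R}_0$-hypothesis to the vanishing case is right, but you still owe the argument that the modified $K_1$-splitting is the $K_1$-part of a $\Lambda$-compatible splitting of $[\pi_0]$; without it the citation does not go through.
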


\begin{proof}
We may write
$$
\Kone(M_{\phi, \psi})=\Kzero(A)\oplus \theta(\Kone(C)).
$$
Let $h=R_{\phi, \psi}\circ \theta.$ If there is a {homomorphism}
$d:K_1(C)\to K_0(A)$ such that $h=\rho_A\circ d,$ define
$$
\theta'=\theta-d.
$$
Note that $\theta$ is a homomorphism, and $(\pi_0)_{*1}\circ \theta={\mathrm
{id}}_{K_1(C)}.$  Then
$$
R_{\phi, \psi}\circ \theta'=R_{\phi, \psi}\circ \theta-\rho_A\circ
d=0.$$



Since $[\phi]=[\psi]$ in ${KK}(C,A),$ there exists an element $\theta_0\in \mathrm{Hom}_{\Lambda}(\underline{{K}}(C), \underline{{K}}(M_{\phi, \psi}))$ such that
$[\pi_0]\circ [\theta_0]=[\rrm{id}_{\underline{{K}}(C)}].$
Define $\theta_0': K_i(C)\to {K}_{i}(M_{\phi, \psi})$ by $\theta_0'|_{{K}_1(C)}=\theta'$ and
$\theta_0'|_{{K}_0(C)}=\theta_0|_{{K}_0(C)}.$ By the UCT, there exists $\theta_0''\in {KL}(C,M_{\phi, \psi})$
such that $\Gamma(\theta_0'')=\theta_0',$  where $\Gamma$ is the map from ${KL}(C, M_{\phi, \psi})$ onto
$\mathrm{Hom}({K}_*(C), {K}_*(M_{\phi, \psi})).$
We will use the identification
$$
{KL}(\mathrm{S}C, M_{\phi, \psi})=\mathrm{Hom}_{\Lambda}(\underline{{K}}(\mathrm{S}C), \underline{{K}}(M_{\phi,\psi})).
$$ (see \cite{DL}).
Put
$$
x_0=[\pi_0]\circ \theta_0''-[\rrm{id}_{\underline{{K}}(C)}].
$$
Then $\Gamma(x_0)=0.$
Define $\theta_1=\theta_0''-\theta_0\circ x_0\in \mathrm{Hom}_{\Lambda}(\underline{{K}}(C), \underline{{K}}(M_{\phi,\psi})).$ Then one computes that
\beq\label{Lr0-3}
[\pi_0]\circ \theta_1 &=& [\pi_0]\circ \theta_0''-[\pi_0]\circ \theta_0\circ x_0\\
&=&([\rrm{id}_{\underline{{K}}(C)}]+x_0)-[\rrm{id}_{\underline{{K}}(C)}]\circ x_0\\
&=&[\rrm{id}_{\underline{{K}}(C)}]+x_0-x_0=[\rrm{id}_{\underline{{K}}(C)}].
\eneq
Moreover,
$$
\Gamma(\theta_1)|_{{K}_1(C)}=\theta'.
$$
In particular,
$$
R_{\phi, \psi}\circ (\theta_1)_{{K}_1(C)}=0.
$$
It follows from Theorem 10.7 of \cite{Lin-Asy} that $\phi$ and $\psi$ are asymptotically unitarily equivalent.
\end{proof}

\begin{rem}\label{R2}
In \ref{Lr0}, the condition that such $\theta$ exists is also necessary.
This follows from Theorem 9.1 of \cite{Lin-Asy}.
\end{rem}

\begin{defn}\label{KKM}
Let $A$ be a unital C*-algebra, and let $C$ be a unital separable \CA.
Denote by $\mathrm{Mon}_{asu}^e(C,A)$ the set of asymptotically
unitary equivalence classes of unital monomorphisms.

Denote by ${\boldsymbol{K}}$ the map from
$\textrm{Mon}_{asu}^e(C, A)$ into ${KK}_e(C,A)^{++}$ defined by
$$
\phi\mapsto [\phi]\tforal \phi\in \mathrm{Mon}_{asu}^e(C,A).
$$
Let $\kappa\in {KK}_e(C,A)^{++}.$ Denote by $\langle \kappa \rangle$
the classes of $\phi\in \mathrm{Mon}_{asu}^e(C,A)$ such that
${\boldsymbol{ K}}(\phi)=\kappa.$

Denote by ${\widetilde{\boldsymbol{ K}}}$ the map from
$\textrm{Mon}_{asu}^e(C, A)$ into ${KKT}(C,A)^{++}$ defined by
$$
\phi\mapsto ([\phi], \phi_T)\tforal \phi\in
\mathrm{Mon}_{asu}^e(C,A).
$$
Denote by $\langle \kappa, \lambda \rangle $ the classes of
$\phi\in \mathrm{Mon}_{asu}^e(C,A)$ such that
${\widetilde{\boldsymbol{ K}}}(\phi)=(\kappa,\,\lambda).$

\end{defn}

\begin{thm}\label{MT2}
Let $C$ be a unital AH-algebra and let $A$ be a unital separable
simple \CA\, with $\mathrm{TR}(A)=0.$ Then the map
${\widetilde{\boldsymbol{K}}}: \mathrm{Mon}_{asu}^e(C,A)\to
{KKT}(C,A)^{++}$ is surjective. Moreover, for each $(\kappa,
\lambda)\in {KKT}(C,A)^{++}$, there exists a bijection
$$
\eta: \langle \kappa,\lambda \rangle \to \mathrm{Hom}({K}_1(C),
\aff(\mathrm{T}(A)))/{\cal R}_0.
$$
\end{thm}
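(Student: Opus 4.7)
The surjectivity of $\widetilde{\boldsymbol{K}}$ is an immediate restatement of Theorem \ref{TmAH}: given $(\kappa,\lambda)\in KKT(C,A)^{++}$, that theorem produces a unital monomorphism $\phi:C\to A$ with $[\phi]=\kappa$ and $\phi_{\mathrm{T}}=\lambda$, so $\widetilde{\boldsymbol{K}}([\phi])=(\kappa,\lambda)$. In particular, $\langle\kappa,\lambda\rangle\neq\varnothing$.

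To construct $\eta$, I would fix a reference $\phi_0\in\langle\kappa,\lambda\rangle$. For any $\psi\in\langle\kappa,\lambda\rangle$, one has $[\phi_0]=[\psi]$ in $KK(C,A)$, so via the UCT and the six-term sequence in \ref{D-Maf} the extension
\begin{displaymath}
0\to K_0(A)\to K_1(M_{\phi_0,\psi})\to K_1(C)\to 0
\end{displaymath}
admits a splitting $\theta:K_1(C)\to K_1(M_{\phi_0,\psi})$. Since also $\phi_{0,\mathrm{T}}=\psi_{\mathrm{T}}=\lambda$, the rotation map $R_{\phi_0,\psi}$ is defined, and I set
$$\eta([\psi]):=[R_{\phi_0,\psi}\circ\theta]\in\mathrm{Hom}(K_1(C),\aff(\mathrm{T}(A)))/{\cal R}_0.$$
Independence of the choice of $\theta$ follows from Lemma \ref{DrL}: any two splittings differ by a map $d:K_1(C)\to K_0(A)$, and $R_{\phi_0,\psi}\circ\imath_1\circ d=\rho_A\circ d\in{\cal R}_0$. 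Independence of the representative of $[\psi]$ follows from a concatenation argument: if $\psi'=\lim_{t\to\infty}\mathrm{Ad}(u(t))\circ\psi$, the additional segment of the unitary path contributes a correction to the rotation integral that lies in $\rho_A(K_0(A))$, hence is absorbed by ${\cal R}_0$.

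For surjectivity of $\eta$, given $\xi\in\mathrm{Hom}(K_1(C),\aff(\mathrm{T}(A)))$ I apply Theorem \ref{rotation-maps} to the inclusion $B:=\phi_0(C)\hookrightarrow A$. The hypotheses on $A$ are all met: $\rho_A(K_0(A))$ is dense in $\aff(\mathrm{T}(A))$ because $\mathrm{TR}(A)=0$, $A$ contains a positive element with spectrum $[0,1]$, Property (B1) holds by Lemma 5.2 of \cite{Lin-Asy}, and Property (B2) associated with $B\cong C$ holds by Proposition \ref{Lbot}. The theorem produces $\alpha\in\overline{\mathrm{Inn}}(B,A)$ with $[\alpha]=[\iota_B]$ in $KK(B,A)$ and rotation map $R_{\iota_B,\alpha}(a,b)=\rho_A(a)+\xi(b)$ under the identification $K_1(M_\alpha)\cong K_0(A)\oplus K_1(B)$. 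Setting $\psi:=\alpha\circ\phi_0$ gives $[\psi]=\kappa$; and since $\alpha$ is approximately inner, $\tau\circ\alpha=\tau$ on $B$ for every $\tau$, forcing $\psi_{\mathrm{T}}=\lambda$. Transporting the rotation map along the isomorphism $\phi_0:C\xrightarrow{\cong}B$ yields $\eta([\psi])=[\xi]$.

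The main obstacle is injectivity of $\eta$. Suppose $\eta([\psi_1])=\eta([\psi_2])$, so there are splittings $\theta_i:K_1(C)\to K_1(M_{\phi_0,\psi_i})$ with $R_{\phi_0,\psi_1}\circ\theta_1-R_{\phi_0,\psi_2}\circ\theta_2=\rho_A\circ d$ for some $d:K_1(C)\to K_0(A)$. The task is to extract from these data a splitting $\theta:K_1(C)\to K_1(M_{\psi_1,\psi_2})$ of $(\pi_0)_{*1}$ satisfying $R_{\psi_1,\psi_2}\circ\theta\in{\cal R}_0$; Theorem \ref{Lr0} then delivers asymptotic unitary equivalence of $\psi_1$ and $\psi_2$. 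I would produce $\theta$ from the cocycle identity for rotation maps under concatenation: a path from $\psi_1$ to $\psi_2$ is built by reversing a path from $\phi_0$ to $\psi_1$ and appending a path from $\phi_0$ to $\psi_2$, so that in $K_1$ of the relevant mapping tori one has, after transporting splittings along the natural connecting maps, $R_{\psi_1,\psi_2}\circ\theta=R_{\phi_0,\psi_2}\circ\theta_2-R_{\phi_0,\psi_1}\circ\theta_1=\rho_A\circ d\in{\cal R}_0$. The technical heart is verifying that splittings can be chosen compatibly across the three mapping tori and that the additivity of $R$ really holds at the level of these chosen splittings, especially on torsion elements of $K_1(C)$; once this bookkeeping is in place, the injectivity is immediate.
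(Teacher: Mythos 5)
Your proposal is correct and follows essentially the same route as the paper: surjectivity of $\widetilde{\boldsymbol{K}}$ from Theorem \ref{TmAH}, surjectivity of $\eta$ from Theorem \ref{rotation-maps} together with Proposition \ref{Lbot} and Lemma 5.2 of \cite{Lin-Asy}, well-definedness modulo ${\cal R}_0$ from Lemma \ref{DrL}, and injectivity by concatenating splittings and invoking Theorem \ref{Lr0}. The ``bookkeeping'' you defer is exactly what the paper carries out with the explicit unitaries $V(\theta_1,z)$, $V(\theta_2,z)$ with matched endpoints and their concatenation $V(\theta_3,z)$ (torsion in $K_1(C)$ causes no extra trouble there), and the invariance of the rotation class under change of representative of the asymptotic unitary equivalence class is the content of Remark \ref{R2}, i.e.\ Theorem 9.1 of \cite{Lin-Asy}.
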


\begin{proof}
It follows from \ref{TmAH} that ${\widetilde{\boldsymbol{K}}}$ is surjective.

Fix a pair $(\kappa, \lambda)\in {KKT}(C,A)^{++}$ and fix
a unital monomorphism $\phi: C\to A$ such that $[\phi]=\kappa$ and $\phi_\mathrm{T}=\lambda.$
For any homomorphism $\gamma\in \mathrm{Hom}({K}_1(C), \aff(\mathrm{T}(A)))$, it follows from \ref{rotation-maps} that there exists a unital monomorphism $\alpha\in {\overline{{\rrm{Inn}}}}(\phi(C), A)$ with $[\alpha\circ \phi]=[\phi]$ in ${KK}(C,A)$ such that there exists a homomorphism $\theta: {K}_1(C)\to M_{\phi, \alpha\circ \phi}$
with $(\pi_0)_{*1}\circ \theta={\rrm{id}}_{{K}_1(C)}$ such that $R_{\phi, \alpha\circ \phi}\circ \theta=\gamma.$
Let $\psi=\alpha\circ \phi.$  Then $R_{\phi, \psi}\circ \theta=\gamma.$ Note also since $\alpha\in {\overline{\rrm{Inn}}}(\phi(C), A),$
$\psi_\mathrm{T}=\phi_\mathrm{T}.$ In particular, ${\widetilde{\boldsymbol{K}}}(\psi)={\widetilde{\boldsymbol{K}}}(\phi).$

Suppose that $\theta': {K}_1(A)\to {K}_1(M_{\phi, \psi})$ such that $(\pi_0)_{*1}\circ \theta'=\rrm{id}_{{K}_1(C)}.$
Then

$$
(\theta'-\theta)(K_1(C))\subset  K_0(A).
$$
It follows that $R_{\phi, \psi}\circ \theta'-R_{\phi,
\psi}\circ \theta\in {\cal R}_0.$

Thus we obtain a well-defined map ${\eta: \langle [\phi],
 \phi_T\rangle \to \mathrm{Hom}({K}_1(A),
\text{Aff}(\text{T}(A)))/{\cal R}_0.}$ From what we have proved, the map
$\eta$ is surjective.
%
%
%

Suppose that $\phi_1, \phi_2: C\to A$ are two unital monomorphisms
such that $\phi_1, \phi_2\in \langle [\phi], \phi_T\rangle$ and
$$
R_{\phi, \phi_1}\circ \theta_1-R_{\phi,\phi_2}\circ \theta_2\in
{\cal R}_0,
$$
where $\theta_1: K_1(C)\to K_1(M_{\phi, \phi_1})$ and $\theta_2:
K_1(C)\to K_1(M_{\phi, \phi_2})$ are homomorphisms such that
$(\pi_0)_{*1}\circ \theta_1=(\pi_0)_{*1}\circ \theta_2={\mathrm
{id}}_{K_1(C)},$ respectively.
Thus there is $\dt: K_1(C)\to K_0(A)$ such that
$$
R_{\phi, \phi_2}\circ \theta_2-R_{\phi,\phi_1}\circ
\theta_1=\rho_A\circ \dt.
$$

Fix $z\in K_1(C)$ and let $u\in M_k(C)$ such that $[u]=z$ in
$K_1(C).$ Let $U(\theta_1,z)(t)\in \mathrm{U}(\mathrm{M}_N(M_{\phi,
\phi_1}))$ be a unitary represented by $\theta_1(z).$ We may assume
that $N\ge k.$ It is easy to see that $U(\theta_1,z)(t)$ is
homotopic to a unitary $V(\theta_1,z)(t)\in
\mathrm{U}(\mathrm{M}_N(M_{\phi, \phi_1}))$ with
$V(\theta_1,z)(0)={\rrm{diag}}(\phi(u), 1_{N-k})$ and
$V(\theta_2,z)(1)={\rrm{diag}}(\phi_1(u), 1_{N-k})$ (by enlarging
the size of matrices if necessary). In particular,
$[V(\theta_1,z)]=\theta_1(z)$ in $K_1(M_{\phi, \phi_1}).$ Similarly,
we may assume that there is a unitary $V(\theta_2,z)\in
\mathrm{U}(\mathrm{M}_N(M_{\phi, \phi_2}))$ with
$V(\theta_2,z)(0)={\rrm {diag}}(\phi(u), 1_{N-k})$ and $V(\theta_2,
z)(1)={\rrm {diag}}(\phi_2(u),1_{N-k})$ which is represented by
$\theta_2(z)$ in $K_1(M_{\phi, \phi_2}).$

Now define
\beq\label{New}
V(\theta_3,z)(t)=\begin{cases}
V(\theta_1,z)(1-2t)\,\,\,\text{for}\,\,\,
t\in [0,1/2]\\
   V(\theta_2,z)(2t-1) \,\,\,\text{for}\,\,\, t\in (1/2,1],
\end{cases}
\eneq
Note that $V(\theta_3,z)\in \mathrm{U}(\mathrm{M}_N(M_{\phi_1,\phi_2})).$


Define the homomorphism $\theta_3: K_1(C)\to K_1(M_{\phi_1, \phi_2})$ by
$$
\theta_3(z)=[V(\theta_3,z)]\tforal z\in K_1(C).
$$
It follows that
$$
(\pi_0)_{*1}\circ \theta_3=(\pi_0)_{*1}\circ \theta_1={\mathrm{id}}_{K_1(C)}.
$$
Moreover, a dirct calculation shows that
$$
R_{\phi_1,\phi_2}\circ \theta_3=R_{\phi,\phi_2}\circ \theta_2-R_{\phi,\phi_1}\circ \theta_1=\rho_A\circ \dt.
$$
It follows from Theorem \ref{Lr0} that $\psi_1$ and $\psi_2$ are
asymptotically unitarily equivalent. Therefore, $\eta$ is one to
one.
%
\end{proof}


\begin{cor}\label{Cah}
Let $C$ be a unital separable amenable simple \CA\, with $\mathrm{TR}(C)=0$ which satisfies the UCT and let
$A$ be a unital simple \CA\, with $\mathrm{TR}(A)=0.$ Then the map ${\boldsymbol{K}}: \mathrm{Mon}_{asu}^e(C,A)\to {KK}_e(C,A)^{++}$ is surjective. Moreover,
the map
$$
\eta: \langle [\phi]\rangle \to \mathrm{Hom}(\Kone(C),\aff(\mathrm{T}(A)))/{\cal R}_0
$$
is bijective.
\end{cor}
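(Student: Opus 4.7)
The plan is to reduce this corollary to Theorem \ref{MT2}. Two reductions are needed: first, to verify that the hypotheses of \ref{MT2} apply to $C$; second, to exhibit a natural identification $KK_e(C,A)^{++}\cong KKT(C,A)^{++}$ in this setting so that the set $\langle [\phi]\rangle$ of Corollary \ref{Cah} coincides with the set $\langle \kappa,\lambda\rangle$ of Theorem \ref{MT2}.

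For the first reduction, since $C$ is a unital separable simple amenable \CA\ with $\mathrm{TR}(C)=0$ satisfying the UCT, the classification theorem invoked in Lemma \ref{KL} (that is, \cite{Lnduke}) says that $C$ is a unital simple AH-algebra with slow dimension growth and real rank zero. In particular, $C$ qualifies as a unital AH-algebra for the purposes of Theorem \ref{MT2}.

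For the second reduction, I would show that the forgetful map $(\kappa,\lambda)\mapsto \kappa$ is a bijection from $KKT(C,A)^{++}$ onto $KK_e(C,A)^{++}$. Given $\kappa\in KK_e(C,A)^{++}$ with image $\bar\kappa\in KL_e(C,A)^{++}$, I would define $\lambda_\kappa:\mathrm{T}(A)\to\mathrm{T}(C)$ on projections by $\lambda_\kappa(\tau)([p])=\tau(\bar\kappa([p]))$, which determines a state on $K_0(C)$ because $\bar\kappa$ is strictly positive and unit-preserving. Since $C$ is simple with $\mathrm{TR}(C)=0$, we have $\mathrm{T}(C)=\mathrm{T}_{\mathtt{f}}(C)$ (all nonzero traces on a unital simple \CA\ are faithful) and $\rho_C(K_0(C))$ is dense in $\mathrm{Aff}(\mathrm{T}(C))$; hence $\lambda_\kappa(\tau)$ extends uniquely to a faithful tracial state on $C$, and uniqueness forces any compatible $\lambda$ to equal $\lambda_\kappa$. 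Thus $\kappa\mapsto(\kappa,\lambda_\kappa)$ is the inverse of the forgetful map, as desired.

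Now, for any unital monomorphism $\phi:C\to A$, the induced map $\phi_{\mathrm{T}}$ is automatically compatible with $[\phi]$, and by the uniqueness just established we have $\phi_{\mathrm{T}}=\lambda_{[\phi]}$. Consequently, for any $\kappa\in KK_e(C,A)^{++}$, the classes $\langle\kappa\rangle$ (as in \ref{KKM}) and $\langle\kappa,\lambda_\kappa\rangle$ coincide. Surjectivity of $\boldsymbol{K}$ is then immediate from the surjectivity of $\widetilde{\boldsymbol{K}}$ in Theorem \ref{MT2}, and the claimed bijection
$$
\eta:\langle[\phi]\rangle\to\mathrm{Hom}(K_1(C),\mathrm{Aff}(\mathrm{T}(A)))/\mathcal R_0
$$
is precisely the bijection $\eta:\langle[\phi],\phi_{\mathrm{T}}\rangle\to\mathrm{Hom}(K_1(C),\mathrm{Aff}(\mathrm{T}(A)))/\mathcal R_0$ furnished by \ref{MT2}. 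There is no real obstacle in this argument; the only point that requires care is the verification that $\lambda_\kappa$ takes values in $\mathrm{T}_{\mathtt{f}}(C)$, which reduces to simplicity of $C$.
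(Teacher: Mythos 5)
Your proposal is correct and takes essentially the same route as the paper, which likewise reduces Corollary \ref{Cah} to Theorem \ref{MT2} by noting that in this case $\mathrm{T}_{\mathtt{f}}(C)=\mathrm{T}(C)$ and $\rho_C(K_0(C))$ is dense in $\mathrm{Aff}(\mathrm{T}(C))$, so that the compatible $\lambda$ is uniquely determined by $\kappa$ and the two conclusions follow from the surjectivity of $\widetilde{\boldsymbol{K}}$ and the bijection $\eta$ there. The only point you state a bit loosely is the existence of $\lambda_\kappa$: density of $\rho_C(K_0(C))$ yields uniqueness of the extension, while existence rests on the fact that every state on $K_0(C)$ is realized by a tracial state of $C$ (valid for simple exact real rank zero algebras, as in the remark following Theorem \ref{TmAH}).
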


\begin{proof}
It follows from \ref{MT2} that it suffices to point out that in this case $\rho_C({K}_0(C))$ is dense in $\aff(\mathrm{T}(C))$ and $\mathrm{T}_{\mathtt{f}}(C)=\mathrm{T}(C).$
\end{proof}

\begin{cor}\label{caut}
Let $A$ be a unital separable amenable simple \CA\, with $\mathrm{TR}(A)=0$ which satisfies the UCT.
Then the map ${\boldsymbol{K}}: \mathrm{Aut}(A)\to {KK}_e^{-1}(A,A)^{++}$ is surjective. Moreover,
the map
$$
\eta: \langle \rrm{id}_A\rangle \to \mathrm{Hom}(\Kone(A),\aff(\mathrm{T}(A)))/{\cal R}_0
$$
is bijective.
\end{cor}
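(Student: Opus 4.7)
The first assertion, surjectivity of $\boldsymbol{K}:\mathrm{Aut}(A)\to KK_e^{-1}(A,A)^{++}$, is exactly Corollary \ref{Caut0}, already established.

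For the bijectivity of $\eta$, my plan is to specialize Corollary \ref{Cah} to the case $C=A$ and then upgrade the monomorphism representatives to automorphisms. To handle surjectivity of $\eta$, given $\gamma\in\mathrm{Hom}(\Kone(A),\aff(\mathrm{T}(A)))$, I apply Theorem \ref{rotation-maps} with $B=A$ and $\iota=\mathrm{id}_A$; this produces $\alpha=\lim_{n\to\infty}\mathrm{Ad}(w_n)\in\overline{\mathrm{Inn}}(A,A)$ with $[\alpha]=[\mathrm{id}_A]$ in $KK(A,A)$ and rotation map realizing $\gamma$ modulo $\mathcal R_0$. Because $\alpha\in\overline{\mathrm{Inn}}(A,A)$, one automatically has $\alpha_\mathrm{T}=\mathrm{id}_{\mathrm{T}(A)}$. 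Since $B=A$, the increasing family $\{\mathcal F_n\}$ appearing in the construction can be chosen with dense union in $A$, and then the argument from the last paragraph of the proof of Corollary \ref{Caut0} applies: the sequence $\{\mathrm{Ad}(w_n)\}$ of inner automorphisms satisfies the hypotheses of Lemma \ref{surj}, forcing the pointwise limit $\alpha$ to be invertible; hence $\alpha\in\mathrm{Aut}(A)$.

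For the injectivity of $\eta$, suppose $\alpha_1,\alpha_2\in\mathrm{Aut}(A)$ both satisfy $[\alpha_i]=[\mathrm{id}_A]$ and have rotation classes agreeing in $\mathrm{Hom}(\Kone(A),\aff(\mathrm{T}(A)))/\mathcal R_0$. The injectivity argument of Theorem \ref{MT2} then adapts verbatim: using the concatenation construction (\ref{New}), one builds a homomorphism $\theta_3:\Kone(A)\to\Kone(M_{\alpha_1,\alpha_2})$ splitting $(\pi_0)_{*1}$ with $R_{\alpha_1,\alpha_2}\circ\theta_3\in\mathcal R_0$, and Theorem \ref{Lr0} then yields asymptotic unitary equivalence of $\alpha_1$ and $\alpha_2$. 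Well-definedness of $\eta$ (independence of the choice of splitting) is handled exactly as in the proof of Theorem \ref{MT2}, since two splittings differ by a homomorphism $\Kone(A)\to\Kzero(A)$ whose image under $R_{\mathrm{id},\alpha}$ lies in $\mathcal R_0$ by definition.

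The principal subtlety is confirming that the monomorphism produced by Theorem \ref{rotation-maps} is actually an automorphism rather than merely a unital embedding of $A$ into itself. This is the sole new ingredient needed to go from Corollary \ref{Cah} to Corollary \ref{caut}, and it was already carried out in Corollary \ref{Caut0}; the essential point is that when source equals target, the finite subsets $\mathcal F_n$ used in the inductive construction exhaust $A$, making Lemma \ref{surj} applicable to the sequence of inner automorphisms $\{\mathrm{Ad}(w_n)\}$.
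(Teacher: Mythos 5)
Your proposal is correct and follows essentially the same route as the paper, whose proof of this corollary is precisely ``this follows from \ref{Caut0} and \ref{rotation-maps} as in the proof of \ref{MT2}'': surjectivity of $\boldsymbol{K}$ from Corollary \ref{Caut0}, surjectivity of $\eta$ from Theorem \ref{rotation-maps} with the Lemma \ref{surj} upgrade to an automorphism (the same device used in the last paragraph of the proof of \ref{Caut0}), and injectivity via the concatenation argument of Theorem \ref{MT2} together with Theorem \ref{Lr0}. Your identification of the automorphism upgrade as the only new ingredient beyond Corollary \ref{Cah} matches the paper's intent exactly.
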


\begin{proof}
This follows from \ref{Caut0} and \ref{rotation-maps} as in the proof of \ref{MT2}.
\end{proof}

\section{Classification of simple \CA s}\label{cla}

Denote by $Q$ the UHF algebra with $\Kzero(Q)=\Ratn$ with $[1_Q]=1$. If $\mathfrak{p}$ is a supernatural number, let $M_{\mathfrak{p}}$ denote the UHF-algebra associated with $\mathfrak{p}$.
\begin{lem}\label{Lasy}
Let $B$ be a unital separable amenable simple \CA\, such that $B\otimes Q$ has tracial rank zero.
Let $A$ be unital separable amenable  simple C*-algebras with tracial rank
zero satisfying the UCT, and let $\varphi_1, \varphi_2: A\to B$ be
two homomorphisms.
Suppose that $\phi, \psi: A\to B\otimes Q$ are two unital monomorphisms such that
$$
[\phi]=[\psi]\,\,\, {in}\,\,\,{KK}(A, B\otimes
Q).
$$
Suppose that $\phi$ induces an affine { homeomorphism}
${\phi_T}: \mathrm{T}(B\otimes Q)\to \mathrm{T}(A)$ by
$$
{\phi_T}(\tau)(a)=\tau\circ \phi(a)\tforal a\in A_{s.a}
$$
and for all $\tau\in \mathrm{T}(B\otimes Q).$
  Then there exists an automorphism $\alpha\in \mathrm{Aut}(\phi(A), \phi(A))$ with
  $[\alpha]=[\rrm{id}_{\phi(A)}]$ in ${KK}(\phi(A), \phi(A))$ such that
  $\alpha\circ \phi$ and $\psi$ are strongly asymptotically unitarily equivalent.
  \end{lem}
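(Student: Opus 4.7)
Since $A$ is a unital separable simple amenable C*-algebra with $\mathrm{TR}(A)=0$ satisfying the UCT, the classification theorem of \cite{Lnduke} gives that $A$ is a unital simple AH-algebra, and so $\phi(A)\cong A$ is covered by the results of Section~\ref{rotation}. My first observation is that the trace-matching condition is automatic: for any projection $p\in\mathrm{M}_\infty(A)$ and any $\tau\in\mathrm{T}(B\otimes Q)$, the hypothesis $[\phi]=[\psi]$ forces $\tau(\phi(p))=\tau(\psi(p))$, and since $\mathrm{TR}(A)=0$ makes projections separate the tracial states of $A$, it follows that $\tau\circ\phi=\tau\circ\psi$, hence $\phi_\mathrm{T}=\psi_\mathrm{T}$. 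Of the three ingredients needed for Theorem~\ref{Lr0} (matching $KK$, matching traces, rotation class in $\mathcal{R}_0$), only the rotation class remains to be arranged.

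Because $[\phi]=[\psi]$ in $KK(A,B\otimes Q)$, the six-term sequence for $M_{\phi,\psi}$ splits into pure extensions, so I pick a splitting $\theta:\Kone(A)\to\Kone(M_{\phi,\psi})$ with $(\pi_0)_{*1}\circ\theta=\mathrm{id}$ and set $\eta:=R_{\phi,\psi}\circ\theta\in\mathrm{Hom}(\Kone(A),\mathrm{Aff}(\mathrm{T}(B\otimes Q)))$; its class modulo $\mathcal{R}_0$ is independent of $\theta$. Transport $-\eta$ through the isomorphism $\mathrm{Aff}(\mathrm{T}(B\otimes Q))\cong\mathrm{Aff}(\mathrm{T}(\phi(A)))$ induced by the affine homeomorphism $\phi_\mathrm{T}$, and through the $\Kone$-isomorphism induced by $\phi$, to obtain $\widetilde\eta\in\mathrm{Hom}(\Kone(\phi(A)),\mathrm{Aff}(\mathrm{T}(\phi(A))))$. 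By Corollary~\ref{caut} applied to $\phi(A)$ (which is a unital separable simple amenable C*-algebra with tracial rank zero satisfying the UCT), there exists $\alpha\in\mathrm{Aut}(\phi(A))$ with $[\alpha]=[\mathrm{id}_{\phi(A)}]$ in $KK(\phi(A),\phi(A))$ whose rotation class modulo $\mathcal{R}_0$ equals $\widetilde\eta$.

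Finally, I verify the hypotheses of Theorem~\ref{Lr0} for the pair $(\alpha\circ\phi,\psi)$. The identity $[\alpha\circ\phi]=[\phi]=[\psi]$ in $KK(A,B\otimes Q)$ is immediate, and since $[\alpha]=[\mathrm{id}_{\phi(A)}]$ and $\mathrm{TR}(\phi(A))=0$, $\alpha$ fixes every tracial state of $\phi(A)$, giving $(\alpha\circ\phi)_\mathrm{T}=\phi_\mathrm{T}=\psi_\mathrm{T}$. The additivity of the rotation integral under concatenation of unitary paths, together with the naturality of $R$ with respect to the inclusion $\phi(A)\hookrightarrow B\otimes Q$ and the way $\widetilde\eta$ was built from $-\eta$, gives $R_{\phi,\alpha\circ\phi}\equiv-\eta\pmod{\mathcal{R}_0}$ on a compatible splitting, whence $R_{\alpha\circ\phi,\psi}\equiv R_{\phi,\psi}-R_{\phi,\alpha\circ\phi}\equiv 0\pmod{\mathcal{R}_0}$. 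Theorem~\ref{Lr0} then yields an asymptotic unitary equivalence between $\alpha\circ\phi$ and $\psi$, and a standard reparametrization of the resulting path of unitaries so that $u(0)=1$ upgrades it to a strong asymptotic unitary equivalence. The main obstacle throughout is the rotation-map bookkeeping in this last step: one must precisely describe how the rotation class of an automorphism of $\phi(A)$ is transported through the inclusion into $B\otimes Q$ and matched against $\eta$, ensuring the cancellation occurs modulo $\mathcal{R}_0$; this is essentially a naturality argument but requires consistent choices of splittings and careful use of the $\phi_\mathrm{T}$-identification of tracial affine spaces.
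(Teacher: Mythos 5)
Your overall route---deduce $\tau\circ\phi=\tau\circ\psi$ from $[\phi]=[\psi]$ and $\mathrm{TR}(A)=0$, form the class of $R_{\phi,\psi}\circ\theta$ modulo $\mathcal{R}_0$, realize it by an automorphism $\alpha$ of $\phi(A)$ with $[\alpha]=[\mathrm{id}_{\phi(A)}]$ via Corollary \ref{caut}/Theorem \ref{rotation-maps}, cancel, and invoke Theorem \ref{Lr0}---is essentially the argument of Lemma 3.2 of \cite{Lin-App} with Theorem \ref{rotation-maps} replacing the Kishimoto--Kumjian realization, which is exactly what the paper's one-line proof cites. But there is one genuine gap: the passage from asymptotic to \emph{strong} asymptotic unitary equivalence. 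Theorem \ref{Lr0} only produces a continuous path $u(t)$, $t\in[0,\infty)$, with $\mathrm{Ad}(u(t))\circ\alpha\circ\phi\to\psi$; no ``standard reparametrization'' can force $u(0)=1$, because the class $[u(t)]\in \Kone(B\otimes Q)$ is constant along a continuous path, and one can prepend a path from $1$ to $u(0)$ only when $u(0)\in \mathrm{U}_0(B\otimes Q)$. Removing this $\Kone$-obstruction is a real step, not bookkeeping: in \cite{Lin-Asy} it is controlled by the subgroup $H_1(\Kzero(A),\Kone(B\otimes Q))$, and it disappears here precisely because $\Kone(B\otimes Q)\cong \Kone(B)\otimes\Ratn$ is divisible and $[1_A]$ has infinite order in $\Kzero(A)$, so every class of $\Kone(B\otimes Q)$ is hit by a homomorphism from $\Kzero(A)$ sending $[1_A]$ to it. This is exactly where the $Q$-stabilization of the codomain is used, and it is why the paper proves the lemma by citing Lemma 3.2 of \cite{Lin-App} (where the strong statement is what is established) rather than by quoting Theorem \ref{Lr0} alone. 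You must either supply this argument or cite the strong form of the uniqueness theorem in \cite{Lin-Asy}.

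Two smaller repairs. First, purity of $\eta_1(M_{\phi,\psi})$ only yields partial splittings on finitely generated subgroups; the global splitting $\theta$ you pick exists because $[\phi]=[\psi]$ holds in $KK$ (not merely $KL$), which via the UCT makes the extension trivial---say this, since ``pure'' alone does not give $\theta$. Second, your signs do not cohere: with $R_{\phi,\psi}\circ\theta\equiv\eta$ and $R_{\phi,\alpha\circ\phi}\equiv-\eta$, the concatenation identity you quote gives $R_{\alpha\circ\phi,\psi}\equiv R_{\phi,\psi}-R_{\phi,\alpha\circ\phi}\equiv 2\eta$, not $0$; under consistent orientation conventions you should have $\alpha$ realize $+\eta$, exactly as in the injectivity part of the proof of Theorem \ref{MT2}. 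These two points are easily fixed; the strong-versus-asymptotic point is the one that needs an actual argument.
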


\begin{proof}
The proof is exactly the same as that of
the proof of Lemma 3.2 of \cite{Lin-App} but we apply \ref{rotation-maps} instead of 4.1 of \cite{KK} so that
the restriction on ${K}_i(A)$ ($i=0,1$) can be removed.
\end{proof}

\begin{defn}\label{Dsasy}
Let $C$ and $A$ be two unital \CA s. Let $\phi,\,\psi: C\to A$ be two homomorphisms. Recall that $\phi$ and $\psi$ are said to be strongly asymptotically unitarily equivalent  if there exists a continuous path of unitaries
$\{u(t): t\in [0, \infty)\}$ such that $u(0)=1$ and
$$
\lim_{t\to\infty}{\rrm{ad}}\, u(t)\circ \phi(c)=\psi(c)\tforal c\in C.
$$
\end{defn}

\begin{lem}[Theorem 3.4 of \cite{Lin-App}]\label{LT1}
Let $A$ and $B$ be two unital separable amenable simple \CA s
satisfying the UCT. Let ${\mathfrak{p}}$ and $\mathfrak{q}$ be
supernatural numbers of infinite type such that
$M_{\mathfrak{p}}\otimes M_{\mathfrak{q}}\cong Q.$ Suppose that
$A\otimes M_{\mathfrak{p}}, A\otimes M_{\mathfrak{q}},$ $B\otimes
M_{\mathfrak{p}}$ and $B\otimes M_{\mathfrak{q}}$ have tracial
rank zero.

Let $\sigma_{\mathfrak{p}}: A\otimes M_{\mathfrak{p}}\to B\otimes
M_{\mathfrak{p}}$ and $\rho_{\mathfrak{q}}: A\otimes
M_{\mathfrak{q}}\to B\otimes M_{\mathfrak{q}}$ be two unital
isomorphisms. Suppose
$$
[\sigma]=[\rho]
$$
in ${KK}(A\otimes Q, B\otimes Q),$
where $\sigma=\sigma_{\mathfrak{p}}\otimes \rrm
{id}_{M_{\mathfrak{q}}}$ and $\rho=\rho_{\mathfrak{q}}\otimes
{\rrm{id}}_{M_{\mathfrak{p}}}.$

Then there is an automorphism $\alpha\in
\mathrm{Aut}(\sigma_{\mathfrak{p}}(A\otimes M_{\mathfrak{p}}))$ such that
$$
[\alpha\circ \sigma_{\mathfrak{p}}]=[\sigma_{\mathfrak{p}}]
$$
in ${KK}(A\otimes M_{\mathfrak{p}}, B\otimes M_{\mathfrak{p}})$,
and $\alpha\circ \sigma_{\mathfrak{p}}\otimes {\rrm{
id}}_{M_{\mathfrak{q}}}$ is strongly asymptotically unitarily
equivalent to $\rho.$
\end{lem}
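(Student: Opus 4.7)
The plan is: first apply Lemma~\ref{Lasy} directly to $\sigma,\rho\colon A\otimes Q\to B\otimes Q$ to obtain an automorphism $\beta$ of $B\otimes Q$ with $\beta\circ\sigma$ strongly asymptotically unitarily equivalent to $\rho$; then realize $\beta$, up to asymptotic unitary equivalence, in the product form $\alpha\otimes\mathrm{id}_{M_{\mathfrak q}}$ for some $\alpha\in\mathrm{Aut}(B\otimes M_{\mathfrak p})$, using the rotation-map classification of Corollary~\ref{caut}. To verify the hypotheses of Lemma~\ref{Lasy}: the algebras $A\otimes Q$ and $B\otimes Q$ have tracial rank zero and satisfy the UCT; $\sigma$ is an isomorphism, so $\sigma_{\mathrm T}$ is an affine homeomorphism; and $[\sigma]=[\rho]$ forces $\sigma_*=\rho_*$ on $\Kzero$, which via the density of $\rho_{A\otimes Q}(\Kzero(A\otimes Q))$ in $\aff(\mathrm T(A\otimes Q))$ (a consequence of tracial rank zero) gives $\sigma_{\mathrm T}=\rho_{\mathrm T}$. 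Lemma~\ref{Lasy} then yields $\beta\in\mathrm{Aut}(B\otimes Q)$ with $[\beta]=[\mathrm{id}_{B\otimes Q}]$ in $KK$ such that $\beta\circ\sigma$ is strongly asymptotically unitarily equivalent to $\rho$.

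Next I would construct $\alpha$ via Corollary~\ref{caut}. Applied to $B\otimes Q$, this corollary says that the asymptotic unitary equivalence class of $\beta$ (which has $[\beta]=[\mathrm{id}]$) is determined by its rotation-map class $[R_\beta]\in\mathrm{Hom}(\Kone(B\otimes Q),\aff(\mathrm T(B\otimes Q)))/\mathcal R_0$. Using the Künneth identifications $\Kone(B\otimes Q)=\Kone(B)\otimes\Ratn$ and $\Kone(B\otimes M_{\mathfrak p})=\Kone(B)\otimes\Z[1/\mathfrak p]$ (via the inclusion $M_{\mathfrak p}\hookrightarrow M_{\mathfrak p}\otimes M_{\mathfrak q}=Q$), together with $\aff(\mathrm T(B\otimes Q))=\aff(\mathrm T(B\otimes M_{\mathfrak p}))$ (since $M_{\mathfrak q}$ has a unique trace), the restriction of $R_\beta$ to $\Kone(B\otimes M_{\mathfrak p})$ is a homomorphism $\psi\in\mathrm{Hom}(\Kone(B\otimes M_{\mathfrak p}),\aff(\mathrm T(B\otimes M_{\mathfrak p})))$. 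Applying \ref{caut} to $B\otimes M_{\mathfrak p}$, I obtain $\alpha\in\mathrm{Aut}(B\otimes M_{\mathfrak p})$ with $[\alpha]=[\mathrm{id}]$ in $KK$ and rotation-map class $[\psi]$. Since $\aff$ is a $\Ratn$-vector space, the rotation map of $\alpha\otimes\mathrm{id}_{M_{\mathfrak q}}$ on $\Kone(B\otimes Q)$ is the unique $\Ratn$-linear extension of $\psi$, which equals $R_\beta$ modulo $\mathcal R_0$. By \ref{caut}, $\alpha\otimes\mathrm{id}_{M_{\mathfrak q}}$ and $\beta$ are asymptotically unitarily equivalent; chaining with Step~1 yields $(\alpha\circ\sigma_{\mathfrak p})\otimes\mathrm{id}_{M_{\mathfrak q}}$ asymptotically unitarily equivalent to $\rho$, and $[\alpha\circ\sigma_{\mathfrak p}]=[\sigma_{\mathfrak p}]$ in $KK(A\otimes M_{\mathfrak p},B\otimes M_{\mathfrak p})$ follows from $[\alpha]=[\mathrm{id}]$.

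To upgrade to strong asymptotic unitary equivalence, I would apply Lemma~\ref{Lasy} a second time to the pair $(\alpha\circ\sigma_{\mathfrak p})\otimes\mathrm{id}_{M_{\mathfrak q}}$ and $\rho$: both have the same $KK$-class and induce the same tracial map, and by construction the rotation obstruction lies in $\mathcal R_0$; the resulting intertwining automorphism is then approximately inner, and by absorbing a representative inner automorphism into $\alpha$ on $B\otimes M_{\mathfrak p}$ (which preserves $[\alpha]=[\mathrm{id}]$), I can arrange the path of unitaries to start at~$1$. The main obstacle will be the rotation-map compatibility in the middle step---verifying that the rotation map of $\alpha\otimes\mathrm{id}_{M_{\mathfrak q}}$ is genuinely the $\Ratn$-linear extension of that of $\alpha$ under the Künneth identifications, and that $\mathcal R_0(B\otimes M_{\mathfrak p})$ tensor-extends into $\mathcal R_0(B\otimes Q)$. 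A secondary delicacy is the final absorption argument, which requires showing that the initial unitary obstructing the path from starting at~$1$ can be realized (or well-approximated) by a unitary in $B\otimes M_{\mathfrak p}$, so that absorption into $\alpha$ does not disturb its product structure.
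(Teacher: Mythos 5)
Your Step 1 (applying Lemma \ref{Lasy} to $\sigma$ and $\rho$ to produce $\beta\in\mathrm{Aut}(B\otimes Q)$ with $[\beta]=[\mathrm{id}]$ and $\beta\circ\sigma$ strongly asymptotically unitarily equivalent to $\rho$) is exactly the paper's first step (the verification $\sigma_{\mathrm T}=\rho_{\mathrm T}$ is not even needed, since \ref{Lasy} only asks that $\sigma_{\mathrm T}$ be a homeomorphism). After that you diverge, and the divergence is where the gap lies. Your middle step --- factoring $\beta$ up to asymptotic unitary equivalence as $\alpha\otimes\mathrm{id}_{M_{\mathfrak q}}$ by matching rotation classes via Corollary \ref{caut} --- is a plausible but substantially unverified computation (one must identify $M_{\alpha\otimes\mathrm{id}}$ with $M_\alpha\otimes M_{\mathfrak q}$, check the K\"unneth compatibility of splittings, rotation maps and of $\mathcal R_0$ under $-\otimes M_{\mathfrak q}$); but even granting all of it, \ref{caut} only classifies up to \emph{plain} asymptotic unitary equivalence, and the conclusion of \ref{LT1} needs \emph{strong} asymptotic unitary equivalence (paths starting at $1$), which is what the intertwining in Theorem \ref{Class} requires. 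Your proposed upgrade does not work as described: applying Lemma \ref{Lasy} ``a second time'' to the pair $(\alpha\circ\sigma_{\mathfrak p})\otimes\mathrm{id}_{M_{\mathfrak q}}$ and $\rho$ produces a correcting automorphism of the \emph{image} of that map, i.e.\ of $B\otimes Q$, so you are back to the original factorization problem; and the alternative absorption you sketch requires replacing $\alpha$ by $\mathrm{Ad}(w)\circ\alpha$ with $w\in B\otimes M_{\mathfrak p}$ such that $w\otimes 1$ carries the $K_1$-class of the initial unitary $u(0)$ of the implementing path. That class lives in $K_1(B\otimes Q)\cong K_1(B)\otimes\Ratn$, while unitaries of the form $w\otimes 1$ only realize classes in the image of $K_1(B\otimes M_{\mathfrak p})\cong K_1(B)\otimes D_{\mathfrak p}$, which is in general a proper subgroup; you flag this as a ``secondary delicacy,'' but it is in fact the crux, and nothing in your argument shows the obstruction class can be arranged to lie in that image.

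The paper avoids all of this with a different second step: it applies Lemma \ref{Lasy} again, not to two maps defined on $A\otimes Q$, but to the two unital monomorphisms $\sigma_{\mathfrak p}$ and $\beta\circ\sigma_{\mathfrak p}$ viewed as maps from $A\otimes M_{\mathfrak p}$ into $B\otimes Q$ (they agree in $KK$, and $(\sigma_{\mathfrak p})_{\mathrm T}$ is a homeomorphism since $\sigma_{\mathfrak p}$ is an isomorphism). Because \ref{Lasy} places the correcting automorphism on the image algebra, and $\sigma_{\mathfrak p}(A\otimes M_{\mathfrak p})=B\otimes M_{\mathfrak p}$, one gets for free an $\alpha\in\mathrm{Aut}(B\otimes M_{\mathfrak p})$ with $[\alpha]=[\mathrm{id}]$ and $\alpha\circ\sigma_{\mathfrak p}$ \emph{strongly} asymptotically unitarily equivalent to $\beta\circ\sigma_{\mathfrak p}$; tensoring with $\mathrm{id}_{M_{\mathfrak q}}$ and chaining with Step 1 finishes the proof, with no rotation-map or K\"unneth bookkeeping and no $K_1$-absorption issue. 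If you want to salvage your route, you would need a ``strong'' version of \ref{caut} (i.e.\ control of the class of $u(0)$ modulo classes realizable from $B\otimes M_{\mathfrak p}$), which is precisely the point your proposal leaves open.
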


\begin{proof}
It follows from \ref{Lasy} that there exists $\beta\in
\mathrm{Aut}(B\otimes Q)$ such that $ \beta\circ \sigma $ is strongly
asymptotically unitarily equivalent to $\rho.$ Moreover,
$[\beta]=[{\rrm{id}}_{B\otimes Q}]$ in ${KK}(B\otimes Q, B\otimes
Q).$ Now consider two homomorphisms $\sigma_{\mathfrak{p}}$ and
$\beta\circ \sigma_{\mathfrak{p}}.$ One has
$$
[\beta\circ\sigma_{\mathfrak{p}}]=[\sigma_{\mathfrak{p}}]\,\,\,{\rrm
{in}}\,\,\,{KK}(A\otimes M_{\mathfrak{p}}, B\otimes Q).
$$
Since $\sigma_{\mathfrak{p}}$ is an isomorphism, it is easy to see
that ${\sigma_{T}}: \mathrm{T}(B\otimes Q)\to
\mathrm{T}(A\otimes M_{\mathfrak{p}})$ is an affine homeomorphism.

By applying \ref{Lasy} again, one obtains $\alpha\in
\mathrm{Aut}(\sigma_{\mathfrak{p}}(A\otimes M_{\mathfrak{p}}))$ such that
$[\alpha]=[\text{id}_{\sigma_{\mathfrak{p}}}]$ in
${KK}(\sigma_{\mathfrak{p}}(A\otimes
M_{\mathfrak{p}}),\sigma_{\mathfrak{p}}(A\otimes
M_{\mathfrak{p}}))$ and  $\alpha\circ\sigma_{\mathfrak{p}}$ is
strongly asymptotically unitarily equivalent to $\beta\circ
\sigma_{\mathfrak{p}}.$

Define $\beta\circ \sigma_{\mathfrak{p}}\otimes {\rrm
{id}}_{M_{\mathfrak{q}}}: A\otimes M_{\mathfrak{p}}\otimes
M_{\mathfrak{q}}\to (B\otimes Q)\otimes M_{\mathfrak{q}}.$ It is
easy to see that $\beta\circ \sigma_{\mathfrak{p}}\otimes {\rrm
{id}}_{M_{\mathfrak{q}}}$ is strongly approximately unitarily
equivalent to $\beta\circ \sigma.$

Note that $\sigma(A\otimes
M_{\mathfrak{p}})=B\otimes M_{\mathfrak{p}}.$ 
Let
$\sigma'=\alpha\circ \sigma_{\mathfrak{p}}\otimes
{\rrm{id}}_{M_{\mathfrak{q}}}.$
It follows that $\sigma'$ is strongly asymptotically unitarily
equivalent to $\beta\circ \sigma.$  Consequently $\sigma'$ is
strongly asymptotically unitarily equivalent to $\rho.$
\end{proof}

\begin{thm}\label{Class}
Let $A$ and $B$ be two unital separable amenable simple \CA s satisfying the UCT.
Suppose that there is an isomorphism
$$
\kappa: ({K}_0(A), {K}_0(A)_+, [1_A], {K}_1(A))\to ({K}_0(B), {K}_0(B)_+, [1_B], {K}_1(B)).
$$
Suppose also that
there is a pair of supper-natural numbers $\mathfrak{p}$ and $\mathfrak{q}$ of infinite type which are relative prime such that
$M_{\mathfrak{p}}\otimes M_{\mathfrak{q}}\cong Q$ and
$$
\mathrm{TR}(A\otimes M_{\mathfrak{p}})=\mathrm{TR}(A\otimes M_{\mathfrak{q}})=\mathrm{TR}(B\otimes M_{\mathfrak{p}})=\mathrm{TR}(B\otimes M_{\mathfrak{q}})=0.
$$
Then $A\otimes {\cal Z}\cong B\otimes {\cal Z}.$
\end{thm}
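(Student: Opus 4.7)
The plan is to combine the $KK$-lifting machinery of this paper with the classification theorem for simple C*-algebras of tracial rank zero (satisfying the UCT) and Winter's result on $\mathcal{Z}$-stability, using Lemma \ref{LT1} as the bridge.

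First I would use the classification theorem for unital separable amenable simple C*-algebras with tracial rank zero that satisfy the UCT (see \cite{Lnduke}, \cite{LinTAI}) to lift $\kappa$ to actual isomorphisms. More precisely, $\kappa$ induces, for $\mathfrak{r}\in\{\mathfrak{p},\mathfrak{q}\}$, an isomorphism of scaled ordered groups
\[
\kappa\otimes\mathrm{id}_{\Kzero(M_{\mathfrak{r}})}:(\Kzero(A\otimes M_{\mathfrak{r}}),\Kzero(A\otimes M_{\mathfrak{r}})_+,[1],\Kone(A\otimes M_{\mathfrak{r}}))\to(\Kzero(B\otimes M_{\mathfrak{r}}),\ldots)
\]
which, together with a compatible affine homeomorphism of trace simplices (which exists automatically since both algebras are unital simple with $\mathrm{TR}=0$ and UHF-absorbing, and traces correspond to states on $\Kzero$ because $\rho$ has dense range and both algebras are $M_{\mathfrak{r}}$-stable), lifts to an isomorphism $\sigma_{\mathfrak{p}}:A\otimes M_{\mathfrak{p}}\to B\otimes M_{\mathfrak{p}}$ and $\rho_{\mathfrak{q}}:A\otimes M_{\mathfrak{q}}\to B\otimes M_{\mathfrak{q}}$.

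Next, I would verify the $KK$-hypothesis needed to invoke Lemma \ref{LT1}. Set $\sigma=\sigma_{\mathfrak{p}}\otimes\mathrm{id}_{M_{\mathfrak{q}}}$ and $\rho=\rho_{\mathfrak{q}}\otimes\mathrm{id}_{M_{\mathfrak{p}}}$, both viewed as unital homomorphisms $A\otimes Q\to B\otimes Q$. Both of them induce the same map $\kappa\otimes\mathrm{id}_{\mathbb Q}$ on $K$-theory. Since $\Kzero(A\otimes Q)=\Kzero(A)\otimes\mathbb Q$ and $\Kone(A\otimes Q)=\Kone(A)\otimes\mathbb Q$ are divisible (hence torsion-free and injective as $\mathbb Z$-modules), the $\mathrm{Pext}$ term in the UCT exact sequence vanishes, so
\[
KK(A\otimes Q,B\otimes Q)\cong\mathrm{Hom}(K_*(A\otimes Q),K_*(B\otimes Q)).
\]
Therefore $[\sigma]=[\rho]$ in $KK(A\otimes Q,B\otimes Q)$.

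With the $KK$-equality in hand, Lemma \ref{LT1} applies directly and produces an automorphism $\alpha\in\mathrm{Aut}(B\otimes M_{\mathfrak{p}})$ such that $[\alpha\circ\sigma_{\mathfrak{p}}]=[\sigma_{\mathfrak{p}}]$ in $KK(A\otimes M_{\mathfrak{p}},B\otimes M_{\mathfrak{p}})$ and
\[
\alpha\circ\sigma_{\mathfrak{p}}\otimes\mathrm{id}_{M_{\mathfrak{q}}}\ \text{is strongly asymptotically unitarily equivalent to}\ \rho
\]
as maps $A\otimes Q\to B\otimes Q$. At this point, one is exactly in the situation of W.~Winter's $\mathcal Z$-stability isomorphism theorem from \cite{Winter-Z}: two isomorphisms $\alpha\circ\sigma_{\mathfrak{p}}: A\otimes M_{\mathfrak{p}}\to B\otimes M_{\mathfrak{p}}$ and $\rho_{\mathfrak{q}}: A\otimes M_{\mathfrak{q}}\to B\otimes M_{\mathfrak{q}}$ whose $Q$-stabilizations are (strongly) asymptotically unitarily equivalent yield an isomorphism $A\otimes\mathcal Z\cong B\otimes\mathcal Z$. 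The main obstacle throughout is ensuring we land in the hypothesis of Lemma \ref{LT1}, and this is precisely where the $KK$-lifting results of Section \ref{kk} (Theorem \ref{Tkk}) are indispensable, since without the ability to lift arbitrary (not merely $KL$) elements we could not match the two $KK$-classes in $KK(A\otimes Q,B\otimes Q)$. The rest is a bookkeeping of invoking established results.
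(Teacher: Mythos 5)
Your proposal is correct and follows essentially the same route as the paper, whose proof simply defers to Theorem 3.5 of \cite{Lin-App} with Lemma \ref{LT1} substituted: lift $\kappa\otimes\mathrm{id}$ to isomorphisms $\sigma_{\mathfrak{p}},\rho_{\mathfrak{q}}$ via the tracial-rank-zero classification theorem, observe that $KK(A\otimes Q,B\otimes Q)\cong\mathrm{Hom}(K_*(A\otimes Q),K_*(B\otimes Q))$ because the (divisible, hence injective) target kills the $\mathrm{Ext}$ term so that $[\sigma]=[\rho]$, apply Lemma \ref{LT1}, and then invoke Winter's unitary-suspension theorem from \cite{Winter-Z} to conclude $A\otimes\mathcal Z\cong B\otimes\mathcal Z$. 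This is exactly the argument the paper intends, so there is nothing further to add.
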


\begin{proof}
The proof is exactly the same as that of Theorem 3.5 of
\cite{Lin-App} but we now apply \ref{LT1} instead.
\end{proof}

\begin{cor}[Corollary 8.3 of \cite{Winter-Z}]\label{CM1}
Let $A$ and $B$ be two unital separable simple ASH-algebras
whose projections separate traces which are ${\cal Z}$-stable.
Suppose that
$$
({K}_0(A), {K}_0(A)_+, [1_A], {K}_1(A))\to ({K}_0(B), {K}_0(B)_+, [1_B], {K}_1(B)).
$$
Then $A\cong B.$
\end{cor}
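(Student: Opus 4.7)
The plan is to reduce this to Theorem \ref{Class} and then exploit the ${\cal Z}$-stability hypothesis, so that the only real work is to supply the tracial rank zero hypothesis on the UHF-tensor products.

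First I would choose a pair of supernatural numbers $\mathfrak{p}$ and $\mathfrak{q}$ of infinite type which are relatively prime and satisfy $M_{\mathfrak{p}}\otimes M_{\mathfrak{q}}\cong Q$; for instance, take $\mathfrak{p}=2^{\infty}$ and let $\mathfrak{q}$ be the product of $p^{\infty}$ over all odd primes $p$. These then satisfy the structural hypothesis of Theorem \ref{Class}.

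Next I would appeal to the results of W. Winter in \cite{Winter-Z}, which imply that for a unital separable simple ${\cal Z}$-stable ASH-algebra $C$ whose projections separate traces and for any UHF-algebra $U$ of infinite type, $C\otimes U$ has tracial rank zero. Applying this to $A$ and $B$ with $U=M_{\mathfrak{p}}$ and $U=M_{\mathfrak{q}}$ would yield
$$
\mathrm{TR}(A\otimes M_{\mathfrak{p}})=\mathrm{TR}(A\otimes M_{\mathfrak{q}})=\mathrm{TR}(B\otimes M_{\mathfrak{p}})=\mathrm{TR}(B\otimes M_{\mathfrak{q}})=0.
$$

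Since unital ASH-algebras are inductive limits of type I C*-algebras and therefore satisfy the UCT, Theorem \ref{Class} will then deliver $A\otimes{\cal Z}\cong B\otimes{\cal Z}$. Because $A$ and $B$ are ${\cal Z}$-stable by hypothesis, this gives $A\cong A\otimes{\cal Z}\cong B\otimes{\cal Z}\cong B$, as desired.

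The only conceptual obstacle has already been dealt with in Theorem \ref{Class}, namely the removal of previous restrictions on $K$-theory (such as finite generation, or torsion being a direct summand); beyond citing Winter's theorem to obtain tracial rank zero after tensoring with each $M_{\mathfrak{p}}$ and $M_{\mathfrak{q}}$, and invoking ${\cal Z}$-stability at the very last step, no further argument is required.
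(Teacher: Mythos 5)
Your proposal is correct and follows essentially the same route as the paper: cite Winter's results in \cite{Winter-Z} to get $\mathrm{TR}(A\otimes M_{\mathfrak{p}})=\mathrm{TR}(A\otimes M_{\mathfrak{q}})=\mathrm{TR}(B\otimes M_{\mathfrak{p}})=\mathrm{TR}(B\otimes M_{\mathfrak{q}})=0$ and then invoke Theorem \ref{Class}. Your explicit choice of $\mathfrak{p},\mathfrak{q}$ and the final use of ${\cal Z}$-stability to pass from $A\otimes{\cal Z}\cong B\otimes{\cal Z}$ to $A\cong B$ only spell out details the paper leaves implicit.
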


\begin{proof}
As in the proof of 6.3 of \cite{Winter-Z}, $A\otimes C$ and $B\otimes C$
have tracial rank zero for any unital UHF-algebra $C$. Thus Theorem
\ref{Class} applies.

\end{proof}

\begin{cor}\label{CM2}
Let $A$ and $B$ be two unital separable simple ${\cal Z}$-stable \CA s which are inductive limits of type $I$ \CA s with unique tracial states. Suppose that
$$
({K}_0(A), {K}_0(A)_+, [1_A], {K}_1(A))\to ({K}_0(B), {K}_0(B)_+, [1_B], {K}_1(B)).
$$
Then $A\cong B.$
\end{cor}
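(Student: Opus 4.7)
The plan is to reduce the corollary to Theorem \ref{Class}. Choose any two supernatural numbers of infinite type $\mathfrak{p}$ and $\mathfrak{q}$ that are relatively prime and satisfy $M_{\mathfrak{p}}\otimes M_{\mathfrak{q}}\cong Q$ (for example $\mathfrak{p}=2^\infty$ and $\mathfrak{q}=\prod_{p\text{ odd prime}} p^\infty$). The remaining hypothesis of \ref{Class} is that $A\otimes M_{\mathfrak{p}}$, $A\otimes M_{\mathfrak{q}}$, $B\otimes M_{\mathfrak{p}}$, and $B\otimes M_{\mathfrak{q}}$ all have tracial rank zero. If this is established, then since the $K$-theoretic isomorphism $\kappa$ induces an isomorphism of the full Elliott invariant (the tracial simplex being a one-point space in every case, so the trace part carries no additional information and compatibility is automatic), Theorem \ref{Class} gives $A\otimes \mathcal Z\cong B\otimes \mathcal Z$, hence $A\cong B$ by the standing $\mathcal Z$-stability assumption.

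The task therefore reduces to verifying $\mathrm{TR}(A\otimes M_{\mathfrak r})=0$ for $\mathfrak r\in\{\mathfrak{p},\mathfrak{q}\}$, and similarly for $B$. First I would observe that, by a routine refinement argument, a simple inductive limit of type I \CA s is in fact an inductive limit of unital subhomogeneous \CA s (an ASH-algebra): one can cut down the type I building blocks by a sequence of projections approaching the unit and use simplicity to obtain a nested system of unital subhomogeneous subalgebras whose union is dense. Thus $A$ is a unital simple ASH-algebra with a unique tracial state, and in particular $A\otimes M_{\mathfrak r}$ is a unital simple ASH-algebra with a unique tracial state. Since $M_{\mathfrak r}$ contains nontrivial projections of arbitrarily small trace, the C*-algebra $A\otimes M_{\mathfrak r}$ has an abundance of projections, and the unique tracial state is trivially separated by projections (vacuously, since only one trace exists). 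Moreover, $A\otimes M_{\mathfrak r}$ remains $\mathcal Z$-stable because $\mathcal Z$-stability passes through tensor products with UHF algebras.

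Therefore $A\otimes M_{\mathfrak r}$ satisfies the hypotheses of Winter's theorem (Corollary 6.3 of \cite{Winter-Z}, which is also the input used in the proof of Corollary \ref{CM1}): it is a unital simple $\mathcal Z$-stable ASH-algebra whose projections separate traces. Tensoring once more with a UHF algebra produces tracial rank zero by that result; but since $M_{\mathfrak r}$ is itself a unital UHF algebra of infinite type, $(A\otimes M_{\mathfrak r})\otimes M_{\mathfrak r}\cong A\otimes M_{\mathfrak r}$, so $\mathrm{TR}(A\otimes M_{\mathfrak r})=0$ directly. The same argument applies verbatim to $B$.

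The main obstacle I anticipate is the first step: justifying rigorously that a simple inductive limit of type I \CA s with a unique trace (and $\mathcal Z$-stable) falls into the class to which the ASH classification machinery applies, that is, that after possibly refining the inductive system one obtains genuine ASH building blocks with the property that projections separate (the unique) trace once one tensors with a UHF. If this reduction is granted, the rest of the argument is the direct appeal to \ref{Class}, with no further analytic content needed beyond what is already packaged in \ref{LT1} and \ref{Lasy}.
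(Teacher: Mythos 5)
Your reduction to Theorem \ref{Class} (choose relatively prime supernatural numbers $\mathfrak{p},\mathfrak{q}$ of infinite type with $M_{\mathfrak p}\otimes M_{\mathfrak q}\cong Q$, verify $\mathrm{TR}=0$ after tensoring with $M_{\mathfrak p}$ and $M_{\mathfrak q}$, then use $\mathcal Z$-stability to pass from $A\otimes\mathcal Z\cong B\otimes\mathcal Z$ to $A\cong B$) is exactly the paper's skeleton. But the step you yourself flag as the ``main obstacle'' is a genuine gap, not a routine refinement: there is no known (let alone routine) argument showing that a unital simple inductive limit of type I \CA s is an ASH-algebra. Subhomogeneity requires a uniform bound on the dimensions of irreducible representations of the building blocks, and cutting type I blocks down by projections close to the unit does nothing to produce such a bound (think of a block like the unitization of $C(X)\otimes\mathcal K$, or any type I algebra with irreducible representations of unbounded dimension: a corner by a large projection is again of the same kind). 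Indeed, the whole point of this corollary, as the introduction of the paper stresses, is that inductive limits of type I algebras form a class ``even more general'' than ASH-algebras; if your reduction were routine, Corollary \ref{CM2} would be a special case of Corollary \ref{CM1} and would not need a separate proof. So the appeal to Winter's Corollary 6.3/8.3, which is formulated for $\mathcal Z$-stable ASH-algebras, is not available.

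The paper gets $\mathrm{TR}(A\otimes C)=0$ for a UHF-algebra $C$ by a different route that never mentions ASH structure: $A\otimes C$ is approximately divisible, and since $A$ (hence $A\otimes C$) has a unique tracial state, projections trivially separate traces; R{\o}rdam's theorem \cite{RorUHF} then gives real rank zero, stable rank one and weakly unperforated $K_0(A\otimes C)$. Since $A\otimes C$ is still an inductive limit of type I \CA s, Theorem 4.15 and Proposition 5.4 of \cite{Lin-Corelle} --- which apply directly to simple inductive limits of type I algebras with these regularity properties --- yield $\mathrm{TR}(A\otimes C)=0$; the same applies to $B$, and Theorem \ref{Class} finishes the proof. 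To repair your argument, replace the ASH/Winter step by this R{\o}rdam plus \cite{Lin-Corelle} argument; the rest of your outline (choice of $\mathfrak p,\mathfrak q$, automatic compatibility of the trace data because the tracial simplices are points, and the final use of $\mathcal Z$-stability) is fine.
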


\begin{proof}
For any UHF-algebra $C,$ $A\otimes C$ is approximately divisible. Since $A$ has a unique tracial state, so does $A\otimes C.$ Therefore projections of $A\otimes C$ separate traces. It follows from \cite{RorUHF} that $A\otimes C$ has real rank zero, stable rank one and weakly unperforated $\Kzero(A\otimes C).$ Moreover, $A\otimes C$ is also an inductive limit of type I C*-algebras. It follows from Theorem 4.15 and Proposition 5.4 of \cite{Lin-Corelle} that $\mathrm{TR}(A\otimes C)=0.$ Exactly the same argument shows that $\mathrm{TR}(B\otimes C)=0.$ It follows from Theorem \ref{Class} that $A\cong B.$
\end{proof}

\bibliographystyle{plain}

\providecommand{\bysame}{\leavevmode\hbox to3em{\hrulefill}\thinspace}
\providecommand{\MR}{\relax\ifhmode\unskip\space\fi MR }
\providecommand{\MRhref}[2]{%
  \href{http://www.ams.org/mathscinet-getitem?mr=#1}{#2}
}
\providecommand{\href}[2]{#2}

\end{document}